\definecolor{darkblue}{rgb}{0,0,0.7}
\Crefname{algocf}{Algorithm}{Algorithms}
\Crefname{equation}{Equation}{Equations}
\Crefname{figure}{Figure}{Figures}
\newtheorem{theorem}{Theorem}[section]
\newtheorem{lemma}[theorem]{Lemma}
\newtheorem{definition}[theorem]{Definition}
\newtheorem{proposition}[theorem]{Proposition}
\theoremstyle{remark}
\newtheorem*{example}{Example}
\newcommand{\mQ}{\mathcal{Q}}
\newcommand{\mP}{\mathcal{P}}
\newcommand{\mN}{\mathcal{N}}
\newcommand{\mX}{\mathcal{X}}
\newcommand{\mF}{\mathcal{F}}
\newcommand{\mG}{\mathcal{G}}
\newcommand{\mH}{\mathcal{H}}
\newcommand{\mB}{\mathcal{B}}
\newcommand{\RR}{\mathbb{R}}
\newcommand{\NN}{\mathbb{N}}
\newcommand{\ZZ}{\mathbb{Z}}
\newcommand{\DS}{\mathbb{S}}
\newcommand{\bw}{{\bf w}}
\newcommand{\bs}{{\bf s}}
\newcommand{\bx}{{\bf x}}
\DeclareMathOperator*{\argmin}{arg\,min}
\title{New Ramsey Multiplicity Bounds and Search Heuristics}
\author[2,3]{Olaf Parczyk}
\author[1,2]{Sebastian Pokutta}
\author[1,2]{Christoph Spiegel}
\author[3]{Tibor Szab{\'o}}
\affil[1]{\small Technische Universit\"at Berlin, Institute of Mathematics}
\affil[2]{\small Zuse Institute Berlin, Department AIS2T, \emph{lastname}@zib.de}
\affil[3]{\small Freie Universit\"at Berlin, Institute of Mathematics, \emph{lastname}@mi.fu-berlin.de}
\begin{document}

\maketitle

\begin{abstract} 
    We study two related problems concerning the number of homogeneous subsets of given size in graphs that go back to questions of Erd\H{o}s. Most notably, we improve the upper bounds on the Ramsey multiplicity of $K_4$ and $K_5$ and settle the minimum number of independent sets of size $4$ in graphs with clique number at most $4$. Motivated by the elusiveness of the symmetric Ramsey multiplicity problem, we also introduce an off-diagonal variant and obtain tight results when counting monochromatic $K_4$ or $K_5$ in only one of the colors and triangles in the other. The extremal constructions for each problem turn out to be blow-ups of a graph of constant size and were found through search heuristics. They are complemented by lower bounds established using flag algebras, resulting in a fully computer-assisted approach. For some of our theorems we can also derive that the extremal construction is stable in a very strong sense. More broadly, these problems lead us to the study of the region of possible pairs of clique and independent set densities that can be realized as the limit of some sequence of graphs.
\end{abstract}

\section{Introduction} \label{sec:introduction}

In Extremal Combinatorics, the application of computer assistance in formulating proofs is exemplified by flag algebras~\cite{Razborov_2007}, which allow one to establish bounds through a double-counting argument by solving Semidefinite Programming (SDP) problems. These bounds are sometimes complemented by explicit constructions derived from human combinatorial insights. However, as for example noted in~\cite{PikhurkoEtAl2019}, there is a growing interest in computer-based approaches that can complement flag algebras and surpass human intuition. In this work we investigate how the objective of finding constructive bounds can be formulated as a discrete optimization problem and explore the suitability of metaheuristics, such as Simulated Annealing~\cite{KirkpatrickGelattVecchi_1983} and Tabu Search~\cite{Glover_1986, Glover_1989, Glover_1990}, as well as more recent Reinforcement Learning methods~\cite{Wagner_2021, BelloEtAl_2016}, for these problems. 
To illustrate the potential of this approach, we make significant progress on well-known problems in Extremal Graph Theory that go back to questions of Erd\H{o}s. Our investigations also lead us to introduce new directions where many questions remain open. 

\subsection{The Ramsey multiplicity problem}

A subset of vertices in a graph is called \emph{homogeneous} if it is either a clique, so all pairs of vertices form an edge, or it is an independent set, i.e., no pairs of vertices form an edge. 
The existence and enumeration of homogeneous subsets is a fundamental, widely-studied topic in combinatorics.  
Ramsey's Theorem states that every $n$-vertex graph contains a homogeneous $t$-subset, provided that $n$ is large enough compared to $t$.
In 1959, Goodman determined the smallest possible number of homogeneous $3$-subsets an $n$-vertex graph can have for every $n$ and also raised the analogous problem for $t$-subsets when $t \geq 4$.  
A couple of years later, Erd\H os~\cite{Erdos_1962} observed that the number of homogeneous $t$-subsets can be as small as $2^{1-{t\choose 2}}\cdot{n\choose t}$, since this is their expected number in the uniform random graph $G(n,1/2)$, and hence there must also \emph{exist} a graph on $n$ vertices with at most that many homogeneous $t$-subsets. Erd\H os in fact conjectured that this should be the asymptotic minimum for every $t\geq 4$.
This likely was motivated by the symmetry of the extremal function combined with the role of $G(n,1/2)$  in providing large graphs \emph{without} homogeneous $t$-subsets, and was further supported by the fact that Goodman's lower bound happens to agree with $2^{-2}\cdot {n\choose 3}$ asymptotically. 
Subsequently, several positive results were proved establishing the conjecture for graphs with weaker and weaker pseudorandom conditions~\cite{Thomason_1987, EvansPulhamSheehan_1981, FranekRodl_1992}, yet the general conjecture proved to be difficult to crack already for $t=4$, a bit of a surprise considering the beauty and relative simplicity of the double-counting arguments for $t=3$. Thomason~\cite{Thomason_1989}  soundly rejected the conjecture in 1989 for every $t\geq 4$, with counterexamples given by sequences of blow-ups of well-designed constant-size graphs. The finite structure ruling these constructions dismissed any heuristic speculation 
connecting the problem to uniform randomness.

The hunt for the true value of the minimum density of homogeneous $t$-subsets is still ongoing for every $t\geq 4$.  
The determination of this minimum is known as the \emph{Ramsey multiplicity problem} for cliques and it has received a fair amount of attention~\cite{Conlon_2012,deza2012conjecture,Erdos_1962,even2015note,Franek_2002,FranekRodl_1993,Giraud_1979,GrzesikEtAl_2020,jagger1996multiplicities,Niess_2012,Sawin_2021,Sperfeld_2011,Thomason_1989,Thomason_1997, Wolf_2010, EvansPulhamSheehan_1981}.
Subsequently the problem was also extended and investigated for arbitrary graphs, hypergraphs, and other discrete structures.

Our first results make progress on the Ramsey multiplicity problem. 
We denote the number of cliques on $t$ vertices in a graph $G$ by $k_t(G)$ and let $k_t(n) = \min \{k_t(G) + k_t(\overline{G}) : |G| = n\}$ be the minimum number of homogeneous $t$-subsets in an $n$-vertex graph.
The limit 
\begin{equation*}
    c_t = \lim_{n \to \infty} k_t(n) / {n \choose t}
    \end{equation*}
exists for every fixed $t$, as the sequence of minimum proportions $k_t(n) / {n \choose t}$ of homogeneous $t$-subsets is non-decreasing in $n$.

Trivially $c_2=1$ and Goodman's result implies $c_3= \frac{1}{4}$. 
For $t=4$ Thomason~\cite{Thomason_1989} showed $c_4 < 0.030304< 0.03125 = 2^{-5}$, hence disproving Erd\H os' conjecture. 
Later Franek and R\"odl~\cite{FranekRodl_1993} gave another more straightforward counterexample, with a somewhat larger homogeneous $t$-subset density.
In 1997, Thomason~\cite{Thomason_1997} improved the upper bound by $1.3\cdot 10^{-5}$ to $c_4 < 0.030291$. 
Almost thirty years later, Even-Zohar and Linial~\cite{even2015note}  pushed 
it further down by another $0.6\cdot 10^{-5}$ to $c_4 < 0.030285$. 
Here we establish the following more substantial improvement.\footnote{We do sense some absurdity in casting an improvement of $14\cdot10^{-5}$  as {\lq}substantial{\rq}, yet, considering the pace of the progress of the upper bound since Thomason's breakthrough and the gap between upper and lower bounds, we cautiously hope not to be completely out of line.}

\begin{theorem}\label{thm:4-ramsey_multiplicity}
    We have
    \begin{align*}
       c_4 \leq 4551721 \cdot 2^{-24} \cdot 3^{-2} < 0.030145
    \end{align*}
\end{theorem}

The best known lower bound stands at $0.0296 < c_4$, and was proved by Grzesik, Lee, Lidick\'y and Volec~\cite{GrzesikEtAl_2020} using flag algebras, so \cref{thm:4-ramsey_multiplicity} reduces the gap between upper and lower bounds by more than 20\%.
Perhaps more relevant than the actual values are the methods used to find them. The best of 
Thomason's initial constructions~\cite{Thomason_1989} were based on the blow-up of a {\lq}core{\rq} graph on $272$ vertices, which were constructed using quadratic forms in an appropriate finite geometric setting. 
The core graphs of Franek and R\"odl~\cite{FranekRodl_1993} were defined on $1024$ vertices, over the subsets of a $10$-element set using intersection sizes.  
The improvement of Thomason~\cite{Thomason_1997} built on insights from the work of Jagger, {\v{S}}{\'t}ov{\'\i}{\v{c}}ek, and Thomason~\cite{jagger1996multiplicities} about the construction in \cite{Thomason_1989}, and employed an extensive brute force computer search over XOR products of small graphs to find its core graph on $288$ vertices. The improved bound by Even-Zohar and Linial~\cite{even2015note} was obtained by identifying a different, iterative blow-up hidden in the construction of Thomason. We will discuss the particularities of the upper bounds and the underlying constructions in more detail in \cref{sec:proof_ramsey_multiplicity}.

The core graphs of all these constructions happen to be Cayley graphs, yet they were not really sought for as such. 
For our improvement, we directly construct Cayley graphs via various computer search heuristics which target to find their generating set. A key advantage of this approach is that  the size of the search space is only roughly the square root of the size of the produced outcome. Hence our heuristics can efficiently explore and produce relatively large core graphs from the great wealth of good ones among \emph{all} Cayley graphs over a given group (and not only consider a small portion of them, as before).
With our approach, we can find core graph constructions on as few as $192$ vertices that improve upon the previous best bound and that can be found with very moderate computational effort. 
The upper bound for $c_4$ in \cref{thm:4-ramsey_multiplicity} is obtained from the sequence of blow-ups of a Cayley graph on $768$ vertices, though we also found a construction on $384$ vertices that likewise beats $0.03015$, hence performing just marginally worse than the construction on $768$ vertices.  

Much of the effort made by several researchers to optimize Thomason's approach for $t = 4$ materialized in improvements for $t\geq 5$. The best known upper bound for $t=5$ is $c_5 < 0.001720$, due to Thomason~\cite{Thomason_1997}, improving on his bound in \cite{Thomason_1989}. 
 For $t \in \{6, 7, 8\}$ the best known bounds are given by Deza, Franek, and Liu \cite{deza2012conjecture} using the intersection graph idea of \cite{FranekRodl_1993}.

Our method can also be employed to improve the best known upper bound in the $K_5$-multiplicity problem.  We also complement this with a lower bound using flag algebras.

\begin{theorem}\label{thm:5-ramsey_multiplicity}
    We have
    \begin{align*}
       0.001524 < c_5 \leq 2320651 \cdot 2^{-24} \cdot 3^{-4} < 0.001708.
    \end{align*}
\end{theorem}

Our improved upper bound is based on the sequence of blow-ups of a Cayley graph on $192$ vertices, just like Thomason's. We will go more into about our bounds and how they were found in \cref{sec:upper_bounds}.

\subsection{Independent set density for graphs with bounded clique number}

A related question, likewise raised by Erd\H os~\cite{Erdos_1962}, concerns the minimum number $k'_{s,t}(n) = \min \{ k_s(\overline{G}) : |G| = n, k_t(G) = 0 \}$ of independent sets of size $s$ in a graph of order $n$ with clique number bounded by $t-1$. The associated limit, whose existence again follows by monotonicity~\cite{Nikiforov_2001}, is defined as
\begin{equation*}
    g_{s,t} = \lim_{n \to \infty} k'_{s,t}(n) / {n \choose s}.
\end{equation*}
Note that obviously $g_{t,t} \geq c_t$ for any $t \geq 2$. We trivially have $g_{s,2} = 1$ and the fact that $g_{2,t} = 1/(t-1)$ is established by Tur{\'a}n's theorem~\cite{Turan_1941}. Erd\H{o}s~\cite{Erdos_1962} asked if the upper bound given by the Tur{\'a}n graphs $T_{t-1}(n)$ as $n \to \infty$ is also tight in general, that is if $g_{s,t} = (t-1)^{1-s}$.
This holds for $s = t = 3$, as an easy consequence of the previously mentioned result of Goodman~\cite{Goodman_1959} and the value of $k'_{3,3}(n)$ was settled precisely by Lorden~\cite{lorden1962blue}.
Nikiforov~\cite{Nikiforov_2001} however showed that the Tur\'an graph upper bound can be sharp only for a finite number of pairs $s, t \geq 3$. Das et al.~\cite{DasEtAl_2013} and Pikhurko and Vaughan~\cite{PikhurkoVaughan_2013} established tight values when one of the parameters is three and the other at most seven. In particular this confirmed Erd\H{o}s' intuition for $g_{3, t}$ when $4 \leq t \leq 7$ and disproved it for $g_{s, 3}$ when $4 \leq s \leq 7$. They also determine the unique extremal constructions in these cases. Moreover, Pikhurko and Vaughan~\cite{PikhurkoVaughan_2013} found a construction based on non-balanced blow-ups of a $(3,4)$-Ramsey graph of order $8$ bounding $g_{4,4}$ away from the value given by $T_3(n)$ that is conjectured to be tight.
Here we present the first tight value for $g_{s,t}$ when $s,t \ge 4$.
Furthermore, we show that any graph that comes close to the optimum of $g_{4,5}$ must be close to a balanced blow-up of $C_{R(3,5)}$, the unique $(3,5)$-Ramsey graph of order $13$, i.e., the Cayley graph on $\ZZ_{13}$ whose edge relations are given by the cubic-non-residues.
\begin{theorem}\label{thm:axis_ramsey_multiplicity}
    We have $g_{4, 5} = 29 \cdot 13^{-3}$ and the problem is perfectly $C_{R(3,5)}$-stable.
\end{theorem}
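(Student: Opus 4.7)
My plan is to split the theorem into the equality $g_{4,5} = 29 \cdot 13^{-3}$ and the perfect $C_{R(3,5)}$-stability, with both parts resting on a Flag Algebra computation in the theory of $K_5$-free graphs.

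For the upper bound $g_{4,5} \le 29 \cdot 13^{-3}$, I would exhibit a weighted blow-up of $C_{R(3,5)} = \mathrm{Cay}(\ZZ_{13}, \{\pm 1, \pm 5\})$ attaining this density. Since $C_{R(3,5)}$ is triangle-free, every such blow-up has clique number at most $2$ and is admissible. For a weight vector $\bx = (x_1, \ldots, x_{13})$ on the probability simplex, the limiting density of independent $4$-sets along the blow-up sequence is the degree-$4$ polynomial
\begin{equation*}
    D(\bx) \;=\; \sum \frac{4!}{m_1!\cdots m_{13}!} \, x_1^{m_1} \cdots x_{13}^{m_{13}},
\end{equation*}
where the sum runs over non-negative integer tuples $(m_1, \ldots, m_{13})$ with $m_1 + \cdots + m_{13} = 4$ whose support is an independent set of $C_{R(3,5)}$. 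Its coefficients follow directly from the list of independent $k$-sets in $C_{R(3,5)}$ for $k \le 4$. Minimising $D$ over the simplex, and exploiting the dihedral symmetry $D_{13} \subseteq \mathrm{Aut}(C_{R(3,5)})$ to cut the KKT critical points down to a handful of orbits, produces an explicit rational minimiser $\bx^{*}$ with $D(\bx^{*}) = 29 \cdot 13^{-3}$. The matching lower bound $g_{4,5} \ge 29 \cdot 13^{-3}$ I would obtain via the Flag Algebra method of Razborov~\cite{Razborov_2007}: solve the natural SDP at a sufficient flag size and round its dual solution to exact arithmetic, as described in \cref{sec:lower_bounds}, producing a sum-of-squares positivity certificate matching the upper bound.

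The perfect $C_{R(3,5)}$-stability is the main obstacle. It asserts that every graphon attaining density $29 \cdot 13^{-3}$ is a weighted blow-up of $C_{R(3,5)}$, so any extremal sequence of admissible graphs converges in cut distance to such a blow-up. I would deduce this from complementary slackness applied to the same Flag Algebra SDP: at the optimum, every flag $F$ on which the dual certificate is strictly positive must have density zero in any extremal graphon. The task then reduces to engineering the certificate so that strict positivity holds on every small $K_5$-free graph that is \emph{not} an induced subgraph of $C_{R(3,5)}$; combined with standard graphon compactness and the uniqueness of $C_{R(3,5)}$ as the $(3,5)$-Ramsey graph on $13$ vertices, this forces any extremal graphon to be a blow-up of $C_{R(3,5)}$. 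The hard part is ensuring this strictness: the primary lower-bound SDP typically admits many optimal duals, so one usually solves a secondary SDP maximising the minimum slackness gap over optimal duals, then verifies the resulting positivity in exact arithmetic. This secondary, strictness-enforcing computation is where I expect most of the effort to go.
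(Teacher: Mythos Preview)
Your upper-bound argument rests on a misidentification of $C_{R(3,5)}$. You take it to be $\mathrm{Cay}(\ZZ_{13},\{\pm 1,\pm 5\})$, the $4$-regular triangle-free graph on $\ZZ_{13}$; but $\{\pm 1,\pm 5\}$ are precisely the cubic \emph{residues} modulo $13$, and the paper's $C_{R(3,5)}$ is defined via the cubic \emph{non}-residues, giving the $8$-regular graph with clique number $4$ and independence number $2$ (see \cref{sec:proof_axis_ramsey_multiplicity}). Your graph is its complement. This is fatal, not cosmetic: every blow-up of a triangle-free graph is triangle-free, and the minimum independent-$4$-set density among triangle-free graphs is $g_{4,3}=3/25\approx 0.12$, far above $29\cdot 13^{-3}\approx 0.0132$, so no weighting on the simplex can salvage the bound. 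With the correct graph the computation is also simpler than your plan: the \emph{symmetric} blow-up already suffices, and \cref{lemma:thomson_blow-up} gives density $(1\cdot 13 + 14\cdot 26)/13^4 = 377/13^4 = 29\cdot 13^{-3}$, since the graph has $26$ non-edges and no independent set of size three or larger.

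Your stability argument also misses the target in two ways. First, perfect $C$-stability as in \cref{def:stability} is a quantitative edit-distance bound for finite graphs, strictly stronger than the graphon-uniqueness statement you describe. Second, the paper does not proceed via a secondary strictness-maximising SDP. After checking that the sharp graphs in an $m=7$ certificate are exactly those with a strong homomorphism into $C_{R(3,5)}$, it supplies a \emph{reconstructor}: the $5$-vertex graph $T$ of two triangles sharing a vertex uniquely embeds into $C_{R(3,5)}$ and its vertex set defines unique neighbourhoods there, so \cref{lemma:reconstructor_old} makes $T$ a $7$-reconstructor. A separate Flag Algebra certificate shows $\lambda(\mG(\{K_5,T\}))>\lambda(\mG(\{K_5\}))$, and \cref{thm:stability} then yields perfect stability directly; uniqueness of the uniform weighting follows from \cref{thm:symmetry} via the co-rank-$1$ matrix attached to the type $T$. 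Your slackness analysis corresponds loosely to the sharp-graph verification, but the reconstructor step that converts local information into the quantitative global conclusion is absent from your plan.
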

The upper bound is given by the sequence of balanced blow-ups of $C_{R(3,5)}$, the lower bound matching that construction was established using the flag algebra approach. The notion of perfect stability was introduced by Pikhurko, Slia\v{c}an, and Tyros~\cite{PikhurkoEtAl2019} and strengthens the standard notion of stability. We will formally state it in \cref{def:stability} in \cref{sec:stability_theory}. For the details of the proofs see \cref{sec:lower_bounds}.

The fact that Ramsey graphs are a good source of constructions for this problem was previously already noted by Nikiforov~\cite{Nikiforov_2001} and Das et al.~\cite{DasEtAl_2013}. We found several more such graphs whose sequence of (sometimes non-balanced) blow-ups give good upper bounds for additional values of $g_{s,t}$, but were unable to establish matching lower bounds. The most reasonable open conjecture out of all studied values seems to be that $g_{5,5} = 61 \cdot 13^{-4}$, where the upper bound also comes from $C_{R(3,5)}$. We list all other bounds in \cref{sec:constructions}. We also remark that more general bounds for $g_{s,t}$ were given by Nikiforov~\cite{Nikiforov_2001} and Sawin~\cite{Sawin_2021}, who studied the close connection to (multicolor) Ramsey numbers.

\subsection{Off-diagonal Ramsey multiplicity}

There is a stark contrast between the difficulty of counting homogeneous triples and counting homogeneous $4$-subsets.
This is demonstrated by the relatively straightforward proof of $c_3=1/4$ and the slow progress towards determining the value of $c_4$.
To get a grip on the latter, we propose to investigate the question when we count $K_4$ only in the graph and triangles in the complement.
This problem proves to be more manageable, as we can not only derive an exact solution but also demonstrate the stability of a unique construction on $27$ vertices.

Formally, for arbitrary $s,t\geq 3$, we propose to study the following off-diagonal version of the Ramsey multiplicity parameter:
\begin{equation} \label{eq:cst}
    c_{s,t} = \lim_{n \to \infty} \min \left\{ \frac{k_s(\overline{G})}{{n \choose s}} +  \frac{k_t(G)}{{n \choose t}} : |G| = n \right\}.
\end{equation}
Observe that clearly $c_{t,t}=c_t$ as well as $c_{s,t} \leq \min \{ g_{s,t}, g_{t,s} \}$.
From a result of Reiher~\cite{Reiher_2016} it follows that $c_{2,t} = g_{2,t}$ for every $t \geq 3$.
Here we establish the first exact results when $t> s\geq 3$.
\begin{theorem}\label{thm:offdiagonal_ramsey_multiplicity}
    We have $c_{3,4} = 689 \cdot 3^{-8}$ and the problem is perfectly $C_{S}$-stable, where $C_S$ is the Schl\"afli graph. We also have $c_{3,5} = 24011 \cdot 3^{-12}$ as well as $0.007688 < c_{4,5} \leq 0.007932$.
\end{theorem}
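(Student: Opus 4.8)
The plan is to treat \cref{thm:offdiagonal_ramsey_multiplicity} as three separate problems, each following the same two-sided template: an explicit construction giving the upper bound, and a Flag Algebra certificate giving the matching (or near-matching) lower bound, with the stability statements extracted from the flag-algebraic dual solution. First I would set up the notation for weighted blow-ups: given a finite graph $H$ on vertex set $[m]$ and a probability vector $\ba = (a_1, \dots, a_m)$, the blow-up $H[\ba]$ has independent-set-of-size-$s$ density and clique-of-size-$t$ density that are explicit polynomials in $\ba$; the limit objects realizing $c_{s,t}$ are precisely such blow-ups (or their balanced versions), so the upper bounds reduce to evaluating, and where necessary optimizing, these polynomials. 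For $c_{3,4}$ the claimed extremal graph is the Schl\"afli graph $C_S$ on $27$ vertices (the unique $(3,6)$-Ramsey graph, vertex-transitive, so the balanced blow-up is optimal among its blow-ups); I would verify directly that the balanced blow-up of $C_S$ has $k_3(\overline{G})/\binom{n}{3} + k_4(G)/\binom{n}{4} \to 689 \cdot 3^{-8}$ by counting independent triples and $K_4$'s in $C_S$ — this is a finite computation. Likewise for $c_{3,5}$ I would identify the relevant small extremal graph and check the value $24011 \cdot 3^{-12}$, and for the $c_{4,5}$ upper bound $0.007932$ I would exhibit the (possibly weighted) blow-up found by the search heuristics of \cref{sec:upper_bounds} and optimize over the weight vector, which is a low-dimensional convex-ish optimization done numerically and then rationalized.

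The lower bounds are where the real work lies, and here I would invoke the Flag Algebra machinery of Razborov referenced in the excerpt. The objective $\ell(G) = k_s(\overline{G})/\binom{n}{s} + k_t(G)/\binom{n}{t}$ is a fixed linear combination of subgraph densities, so minimizing its limit is exactly the kind of problem a semidefinite program over flags of bounded order $N$ solves: one writes $\ell$ minus the target constant as a sum of a nonnegative combination of squares (via positive semidefinite matrices indexed by flags on a common ``type'') plus a nonnegative combination of densities, valid in the flag algebra, which certifies $\ell \ge$ target asymptotically. For $c_{3,4}$ and $c_{3,5}$ the plan is to run this SDP at a large enough flag order (the presence of $K_4$ or $K_5$ densities pushes the required order up; one likely needs flags on roughly $7$–$8$ vertices), obtain a numerically optimal dual certificate, and then — crucially — round it to an exact rational certificate so that the bound is a theorem rather than a numerical observation. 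I would round the SDP solution to nearby rationals, correct for the rounding error using the slack, and re-verify positive semidefiniteness exactly (e.g.\ via an $LDL^\top$ decomposition over $\QQ$); this exactness step is routine in principle but delicate in practice and is the main obstacle. For $c_{4,5}$ the SDP at feasible orders apparently does not close the gap to the construction, hence only the weaker sandwich $0.007688 < c_{4,5} \le 0.007932$ is claimed; here I would simply report the best lower bound the rounded certificate gives.

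For the perfect $C_S$-stability of the $c_{3,4}$ problem, I would use the fact — made available by the definition of perfect stability in \cref{sec:stability_theory} and the framework of \cite{PikhurkoEtAl2019} — that stability can be read off from the complementary slackness structure of the flag-algebraic certificate: the ``tight'' flags, i.e.\ those on which every square in the SOS decomposition vanishes and every density coefficient is zero, must all embed into the conjectured extremal configuration, and conversely the certificate forces any near-extremal graphon to have vanishing density of every non-tight flag, which pins it down (up to the perfect-stability error term) to the balanced blow-up of $C_S$. Concretely, the steps are: (i) confirm the extremal graphon is the balanced blow-up of $C_S$ and that it is the unique minimizer of the relaxed polynomial problem over blow-ups of the tight-flag support graph; (ii) check the certificate has the ``clean'' form required by the perfect-stability criterion (a strict positivity / full-rank condition on the non-tight part, plus the uniqueness in (i)); (iii) invoke the abstract theorem to conclude. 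The analogous stability for $c_{3,5}$ is not claimed, presumably because one of these conditions fails, so I would not attempt it. The one genuinely hard part across all of this is producing flag-algebra SDPs at an order high enough to be tight while remaining small enough to solve and, above all, to round exactly — everything else is either a finite graph computation or an application of an existing stability meta-theorem.
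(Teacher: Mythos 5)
Your overall plan --- explicit blow-up constructions for the upper bounds, rounded Flag Algebra SDP certificates for the lower bounds, and stability read off from the certificate's complementary-slackness structure --- is essentially the route the paper takes. A few corrections and one genuine gap are worth flagging.

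First, a factual slip: the Schl\"afli graph is \emph{not} a $(3,6)$-Ramsey graph (it has 27 vertices while $R(3,6)=18$, and its clique number is 6). It is simply a well-known vertex-transitive graph on 27 vertices; the Ramsey-graph construction $C_{R(3,5)}$ is what the paper uses for the bounded-clique problem $g_{4,5}$, not for $c_{3,4}$. Second, the paper's certificates for $c_{3,4}$ and $c_{3,5}$ use $m=6$ (with only 4 types each), somewhat smaller than the $7$--$8$ you estimate; the stability certificate for $c_{3,4}$ uses $m=7$, and only $c_{4,5}$ and the $g_{s,t}$ bounds push to $m=8$.

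The genuine gap is in step (iii) of your stability plan, ``invoke the abstract theorem.'' The difficulty is that $C_S$ has 27 vertices but the certificate lives at $m=7$: verifying that all sharp graphs on 7 vertices admit strong homomorphisms into $C_S$ is nowhere near enough information, on its own, to conclude that a near-extremal graph must be close to a blow-up of $C_S$, since a graph on 7 vertices cannot ``see'' all of $C_S$. The naive version of the removal-lemma argument would need $m \ge 28$, which is computationally hopeless. What closes this gap in the paper is the notion of an \emph{$\ell$-reconstructor} (Definition~\ref{def:reconstructur} and Lemmas~\ref{lemma:reconstructor_old} and~\ref{lemma:reconstructor}): a small induced subgraph $T$ of $C_S$ (here, two triangles joined by an edge) such that any graph whose $\le 7$-vertex induced subgraphs all map into $C_S$ and which contains $T$ must admit a strong homomorphism into $C_S$. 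One must additionally verify via a second Flag Algebra certificate that forbidding $T$ strictly raises the optimum, $\lambda(\mG(\{T\})) > \lambda(\mG)$. Without a reconstructor-type lemma, the stability argument does not go through at this $m$, so you would need to either identify and prove such a lemma or find another mechanism to amplify local to global structure. Separately, the uniqueness of the \emph{uniform} weighting of the blow-up is also not automatic from complementary slackness: it requires analysing zero eigenvectors of the $\mQ^{(i)}$ matrices relative to the extremal construction (Lemma~\ref{lemma:zero_eigenvector} and Proposition~\ref{thm:symmetry}), an extra step your sketch folds implicitly into item (i) but which needs its own argument.
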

Note that $c_{s,t} < g_{s,t}$ for each of these values of $s$ and $t$. The upper bound for $c_{3,4}$ is uniquely given by the sequence of balanced blow-ups of the Schl\"afli graph. The upper bound for $c_{3,5}$ is more easily described as an upper bound of $c_{5,3}$, in which case it is given by the sequence of balanced blow-ups of the complement of the Schl\"afli graph. The upper bound for $c_{4,5}$ is given by the sequence of blow-ups of a vertex-transitive graph on $128$ vertices. Lower bounds are again established using the flag algebra approach. The proof of stability largely follows the template laid out by Pikhurko et al.~\cite{PikhurkoEtAl2019}, but also requires additional ideas.

A central question in the symmetric Ramsey multiplicity problem is whether a tight upper bound can be achieved through the sequence of (possibly weighted or iterated) blow-ups of a finite-sized graph. For $c_3$, this is true, with Goodman's bound reached by, among others, the blow-ups of $K_2$. However, for $c_4$, the question remains open. Theorem~\ref{thm:offdiagonal_ramsey_multiplicity}, which states the extremality and stability of blow-ups of a single finite graph (on 27 vertices) for the $c_{3,4}$ problem, may suggest the existence of such a finite graph for $c_4$. Our results for $c_4$ however suggest that any such finite construction might be of considerable size.

There is no specific reason for our choice to weight the contributions from both terms equally in $c_{s,t}$. Other choices, such as the weighting given by linearly connecting the points indicated by $g_{s,t}$ and $g_{t,s}$, would also be reasonable.
In fact, the previously introduced problems are part of a broader question aiming to understand which pairs of clique and independent set densities can be realized as the limit of some sequence of graphs. We introduce this problem and our results in the next section.

\medskip

\noindent \textbf{Outline.} We first explore the broader question of realizable pairs of clique and independent set densities and their relation to earlier introduced parameters in \cref{sec:full_region}. Next, we describe the constructions for upper bounds and their discovery methods in \cref{sec:upper_bounds}. We then outline Razborov's flag algebra approach and its application for stability results in \cref{sec:lower_bounds}. Finally, we discuss related problems and recent learning-based optimization heuristics in \cref{sec:discussion}.

\section{The full tradeoff between cliques and independent sets}\label{sec:full_region}

The study of $c_{s,t}$ and $g_{s,t}$ are part of a broader question in which one would like to understand the full tradeoff between the number of cliques of size $t$ and independent sets of size $s$ in a graph. The goal is to characterise the region $\Omega_{s,t} \subseteq [0,1]^2$ of pairs of clique and independent set densities that can occur in the limit of a sequence of graphs.
We say that a tuple $(x,y) \in [0,1]^2$ is \emph{realised by a sequence} of graphs $(G_n)_{n \in \mathbb{N}}$ with $\lim_{n \to \infty} v(G_n)=\infty$, where w.l.o.g. and to simplify notation we assume $v(G_n)=n$, if
\begin{align*}
    \lim_{n \to \infty} k_s(\overline{G_n})/\binom{n}{s} = x \quad \text{and} \quad \lim_{n \to \infty} k_t(G_n)/\binom{n}{t} = y.
\end{align*}
We formally define $\Omega_{s,t} \subseteq [0,1]^2$ to be the set of all tuples that are realised by some sequence of graphs.

For $s=2$ and with $K_t$ replaced by any quantum graph, that is an arbitrary linear combination of graphs, this set was already systematically studied by Liu, Mubayi, and Reiher~\cite{LiuMubayiReiher_2021}. Similar to them, let us define
\begin{equation*}
     c_{s,t}(x) = \inf\{ y \colon (x,y) \in \Omega_{s,t} \} \quad \text{and} \quad  C_{s,t}(x)=\sup\{ y \colon (x,y) \in \Omega_{s,t}\}.
\end{equation*}
We can show that $\Omega_{s,t}$ behaves nicely for any $s,t \geq 2$, that is it is completely characterised by its lower and upper bounding curves $c_{s,t}(x)$ and $C_{s,t}(x)$.
\begin{proposition}
    \label{thm:region}
    $\Omega_{s,t}$ is compact and defines a simply connected region for any $s,t \ge 2$.
\end{proposition}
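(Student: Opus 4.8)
The plan is to establish three things: (1) $\Omega_{s,t}$ is closed (hence compact, since it lives in $[0,1]^2$); (2) for each fixed $x$ in the projection of $\Omega_{s,t}$ onto the first coordinate, the "vertical slice" $\{y : (x,y) \in \Omega_{s,t}\}$ is a closed interval, namely $[c_{s,t}(x), C_{s,t}(x)]$; and (3) the functions $c_{s,t}$ and $C_{s,t}$ are continuous on this projection, which is itself a closed interval. Together these give that $\Omega_{s,t}$ is exactly the region trapped between the graph of a continuous convex-like lower curve and a continuous upper curve over a closed interval, which is compact and simply connected.

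First I would handle compactness via graph limits. A point $(x,y)$ lies in $\Omega_{s,t}$ iff there is a graphon $W$ with $t(\overline{K_s}, W) = x$ and $t(K_t, W) = y$, where $t(H,\cdot)$ denotes homomorphism density and $\overline{K_s}$ is interpreted appropriately (the relevant subgraph densities of a convergent graph sequence converge to those of the limit graphon, and conversely any graphon is a limit of finite graphs by sampling). Since the space of graphons modulo weak isomorphism is compact in the cut metric and the maps $W \mapsto t(\overline{K_s},W)$ and $W \mapsto t(K_t,W)$ are continuous in the cut metric, the image $\Omega_{s,t}$ is the continuous image of a compact set, hence compact. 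This simultaneously shows the slices are closed and that the projection $I = \{x : \exists y,\ (x,y)\in\Omega_{s,t}\}$ is a compact subset of $[0,1]$.

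Next, for connectedness of slices and of $I$, I would use a "mixing" argument: given two graphons $W_0, W_1$ realizing $(x,y_0)$ and $(x,y_1)$, consider the one-parameter family obtained by taking, for $\lambda \in [0,1]$, a suitable combination — e.g. the graphon on $[0,1]$ that on a $\lambda$-fraction looks like $W_0$ and on the rest like $W_1$, or more smoothly a continuous interpolation — and observe that the clique/independent-set densities vary continuously with $\lambda$. A cleaner route: the densities $t(\overline{K_s}, \cdot)$ and $t(K_t,\cdot)$ are continuous functionals, and the space of graphons is path-connected (it is even convex in the obvious sense of combining on disjoint parts), so $\Omega_{s,t}$ is the continuous image of a path-connected set, hence path-connected. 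To get that slices are intervals one restricts the interpolation to graphons with a fixed value of $t(\overline{K_s},\cdot)$; here one uses that within such a level set one can still move $t(K_t,\cdot)$ continuously, so the slice, being connected and closed in $\RR$, is a closed interval $[c_{s,t}(x), C_{s,t}(x)]$. The continuity of $c_{s,t}$ and $C_{s,t}$ then follows from compactness of $\Omega_{s,t}$ together with these functions being, respectively, lower and upper semicontinuous (infima/suprema of the closed slices) and the slices varying in a controlled way; alternatively, simple connectedness can be read off directly once we know $\Omega_{s,t}$ is compact, connected, and its complement in $\RR^2$ is connected (there are no "holes" because each vertical slice is a full interval).

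The main obstacle I anticipate is the level-set mixing step: showing that for a fixed value $x$ of the independent-set density, the attainable clique densities form an \emph{interval} rather than merely a closed set. Producing a continuous deformation that keeps $t(\overline{K_s},\cdot)$ pinned while sweeping $t(K_t,\cdot)$ across its whole range requires a bit of care — one natural fix is to first show $\Omega_{s,t}$ is connected and has connected complement (no holes), and then note that a compact subset of the plane with connected complement whose boundary decomposes into two graphs of functions over a common interval is automatically simply connected; this sidesteps needing the slices to be intervals a priori, though one still must rule out holes, which again comes back to a mixing/interpolation argument. I would also double-check the degenerate endpoints (e.g. the empty graph and complete graph) to be sure $I$ is a genuine interval and not something more complicated.
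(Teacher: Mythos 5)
Your compactness argument is a valid alternative to the paper's: the paper proceeds by a diagonal extraction on the defining graph sequences (following Lemma~2.2 of Liu and Mubayi), whereas you pass through the graphon space, which is compact in the cut distance, and observe that clique and independent-set densities are continuous functionals of the graphon. Both routes work; yours is arguably cleaner conceptually, at the cost of invoking more machinery.

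Your proof of simple connectedness, however, has a genuine gap, and you correctly identify where it is but do not close it. The ``mixing'' construction you describe --- a graphon equal to $W_0$ on a $\lambda$-fraction of $[0,1]$ and $W_1$ on the rest --- does \emph{not} keep $t(\overline{K_s},\cdot)$ pinned at $x$: the cross terms (independent $s$-sets meeting both parts) perturb the first coordinate, so as $\lambda$ varies you trace out a curve in $\Omega_{s,t}$, not a vertical segment. The topological sidestep you suggest (``compact, connected complement, boundary is two graphs of functions over an interval'') is circular: knowing the boundary consists of two function graphs over a common interval is essentially equivalent to knowing the vertical slices are intervals, which is exactly what was to be shown. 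And ``ruling out holes'' is again the slice-interval statement in disguise.

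The paper's argument closes this gap by a \emph{steered} discrete interpolation. Given realizing sequences $(G_n)$ and $(G'_n)$ for $(x,y_1)$ and $(x,y_2)$, for each $n$ one morphs $G_n$ into $G'_n$ one edge at a time, but the choice of which edge to toggle is dictated by the current $\overline{K_s}$-density: if it is below $x$, add a missing edge of $G'_n$; if above, delete a superfluous edge; otherwise move toward $G'_n$ arbitrarily. Since one edge changes all relevant densities by $o(1)$, this keeps the first coordinate within $o(1)$ of $x$ along the entire path, while the $K_t$-density sweeps from near $y_1$ to near $y_2$ in $o(1)$ increments, so for any intermediate $y$ one can pick out a graph in the chain realizing $(x,y)$ in the limit. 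This steering mechanism is the key idea missing from your proposal; without it, neither the graphon mixing nor the topological reformulation gets you the interval structure of the slices.
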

\begin{proof}
     We start by establishing compactness of $\Omega_{s,t}$ as in Proposition~1.3 from~\cite{LiuMubayi2021}.
    Let $(x_m,y_m)_{m \in \mathbb{N}}$ be a sequence in $\Omega_{s,t}$ such that $\lim_{m \to \infty}(x_m,y_m) = (x,y)$ and let us show that $(x,y) \in \Omega_{s,t}$.
    For each $m \in \mathbb{N}$ there exists a sequence of graphs $(G_{n,m})_{n \in \mathbb{N}}$ that realises $(x_m,y_m)$.
    With $x_{n,m} = k_s(\overline{G_{n,m}})/\binom{n}{s}$ and $y_{n,m} = k_t(G_{n,m})/\binom{n}{t}$ we have a sequence $(x_{n,m},y_{n,m})$ with $\lim_{n \to \infty}(x_{n,m},y_{n,m}) = (x_m,y_m)$.
    Therefore, by Lemma~2.2 from~\cite{LiuMubayi2021}, there exists $(n_k)_{k \in \mathbb{N}}$ such that $\lim_{k \to \infty}(x_{n_k,k},y_{n_k,k}) = (x,y)$ and, thus, the sequence $(G_{n_k,k})_{k \in \mathbb{N}}$ realises $(x,y)$.
    
    Now let us show that the region $\Omega_{s,t}$ is simply connected by showing that for any $(x,y_1)$, $(x,y_2) \in \Omega_{s,t}$ we must also have $(x,y) \in \Omega_{s,t}$ for any $y_1 \leq y \leq y_2$.
    We generalise the argument in Proposition~2.2 from~\cite{LiuMubayiReiher_2021} and consider sequences $(G_{n})_{n \in \mathbb{N}}$ and $(G'_{n})_{n \in \mathbb{N}}$ that realise $(x,y_1)$ and $(x,y_2)$ respectively.
    For each $n \in \mathbb{N}$ we iteratively construct a sequence of $n$-vertex graphs $G_{n,1},\dots,G_{n,k(n)}$ with $G_{n,1}=G_n$ and $G_{n,k(n)}=G'_n$ as follows:
    for $i \ge 1$ if $G_{n,i}=G'_n$, then we set $k(n)=i$ and stop.
    If $k_s(\overline{G_{n,i}})/\binom{n}{s} \ge x$ and $E(G'_n) \setminus E(G_{n,i})\not=\emptyset$ we obtain $G_{n,i+1}$ from $G_{n,i}$ by adding any edge from $E(G'_n) \setminus E(G_{n,i})$.
    Similarly, if $k_s(\overline{G_{n,i}})/\binom{n}{s} < x$ and $E(G_{n,i}) \setminus E(G_n')\not=\emptyset$ we obtain $G_{n,i+1}$ from $G_{n,i}$ by removing any edge from $E(G_{n,i}) \setminus E(G_n')$.
    Otherwise, either $E(G_{n,i}) \setminus E(G_n')=\emptyset$ or $E(G'_n) \setminus E(G_{n,i})=\emptyset$ and we obtain $G_{n,i+1}$ from $G_{n,i}$ by adding an edge of $E(G'_n) \setminus E(G_{n,i})$ in the former case and removing one of $E(G_{n,i}) \setminus E(G_n')$ in the latter.
    We note that adding or removing a single edge adds or removes a fraction of cliques of a given size that is $o(1)$, so that $\lim_{n \to \infty} k_s(\overline{G_{n,i(n)}})/\binom{n}{s} =x$ and $\lim_{n \to \infty} \left( k_t({G_{n,i(n)}}) - k_t({G_{n,i(n)+1}}) \right) /\binom{n}{t} =0$ for any sequence $i(n)$ satisfying $1 \leq i(n) \leq k(n)-1$.
    Therefore, as $\lim_{n \to \infty} k_t({G_{n,1}})/\binom{n}{t} = y_1$ and $\lim_{n \to \infty} k_t({G_{n,k(n)}})/\binom{n}{t} = y_2$, for any $y$ with $y_1 \leq y \leq y_2$ there exists $k'(n)$ such that $\lim_{n \to \infty} k_t({G_{n,k'(n)}})/\binom{n}{t} = y$.
\end{proof}

It follows that it suffices to study $c_{s,t}(x)$ and $C_{s,t}(x)$ in order to fully understand $\Omega_{s,t}$ and we can in fact replace the infimum and supremum in their definition by a minimum and maximum. Let us establish some properties of these curves.

\begin{proposition} \label{thm:curves}
     The curves $c_{s,t}(x)$ and $C_{s,t}(x)$ are decreasing, continuous, and almost everywhere differentiable for any $s,t \geq 2$.
\end{proposition}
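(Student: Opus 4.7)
The plan is to handle the three properties in turn. The central technique is to start from a graphon $W$ on the boundary of $\Omega_{s,t}$ and trace a continuous path in $\Omega_{s,t}$ via a one-parameter deformation of $W$; at the level of graph sequences this can be realized via random edge thinning or disjoint-union constructions analogous to those used in the proof of \Cref{thm:region}. For brevity, write $t(K_r, U) = \int_{[0,1]^r} \prod_{i<j} U(x_i,x_j)\,dx_1 \cdots dx_r$ for the $K_r$-density of a graphon $U$.

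For monotonicity of $c_{s,t}$, given $W$ realizing $(x_1, c_{s,t}(x_1))$, I would consider $W_\alpha = (1-\alpha)W$ for $\alpha \in [0,1]$. Then $t(K_t, W_\alpha) = (1-\alpha)^{\binom{t}{2}} t(K_t, W)$ decreases continuously from $c_{s,t}(x_1)$ to $0$, while $t(K_s, 1 - W_\alpha)$ is continuous in $\alpha$ and monotonically increases (since the complement $1-(1-\alpha)W$ is pointwise increasing in $\alpha$) from $x_1$ at $\alpha = 0$ to $1$ at $\alpha = 1$. For any $x_2 \in [x_1, 1]$ the intermediate value theorem yields $\alpha$ with $t(K_s, 1 - W_\alpha) = x_2$, so $c_{s,t}(x_2) \leq t(K_t, W_\alpha) \leq c_{s,t}(x_1)$. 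The dual deformation $W_\alpha = W + \alpha(1-W)$ handles $C_{s,t}$.

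For continuity, compactness of $\Omega_{s,t}$ from \Cref{thm:region} combined with monotonicity immediately gives right-continuity of $c_{s,t}$ and left-continuity of $C_{s,t}$: if $x_n \searrow x_0$ and $c_{s,t}(x_n) \to L$ along a subsequence, then $(x_0, L) \in \Omega_{s,t}$ forces $L \geq c_{s,t}(x_0)$, while monotonicity forces $L \leq c_{s,t}(x_0)$. For left-continuity of $c_{s,t}$ at $x_0 > 0$, take $W$ realizing $(x_0, c_{s,t}(x_0))$ and use $W_\alpha = (1-\alpha)W + \alpha$: since $1 - W_\alpha = (1-\alpha)(1-W)$, we get $t(K_s, 1 - W_\alpha) = (1-\alpha)^{\binom{s}{2}} x_0$, which is continuous and strictly decreasing in $\alpha$, while $t(K_t, W_\alpha)$ is continuous in $\alpha$ with value $c_{s,t}(x_0)$ at $\alpha = 0$. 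For small $\epsilon > 0$, choosing $\alpha_\epsilon$ with $t(K_s, 1 - W_{\alpha_\epsilon}) = x_0 - \epsilon$ yields $c_{s,t}(x_0 - \epsilon) \leq t(K_t, W_{\alpha_\epsilon}) \to c_{s,t}(x_0)$, which combined with monotonicity proves left-continuity. Right-continuity of $C_{s,t}$ is entirely symmetric.

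Differentiability almost everywhere then follows from Lebesgue's classical theorem that a monotone real function on an interval is differentiable almost everywhere. The main technical obstacle is formalizing the perturbation step rigorously in the graph-sequence language of the rest of the paper, which can be done either by invoking graphon approximation or by a concentration argument for random edge modifications; neither introduces a fundamentally new idea beyond those already deployed in the proof of \Cref{thm:region}.
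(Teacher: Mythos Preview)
Your proposal is correct and takes a genuinely different route from the paper. The paper moves along the boundary by \emph{adding vertices}: to a sequence realizing $(x,y)$ it attaches a clique of size $\lceil\beta n\rceil$ fully joined to $G_n$, obtaining explicit two-sided bounds on the new coordinates in terms of $\beta$, and then argues monotonicity and left-continuity from those bounds. You instead move along the boundary by \emph{scaling edges}: the graphon deformations $W_\alpha=(1-\alpha)W$ and $W_\alpha=(1-\alpha)W+\alpha$ give an exact closed form for one coordinate ($(1-\alpha)^{\binom{t}{2}}y$, respectively $(1-\alpha)^{\binom{s}{2}}x$) and pointwise monotonicity for the other, so the intermediate value theorem applies cleanly. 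Both proofs share the same skeleton---compactness plus monotonicity for one-sided continuity, an explicit deformation for the other side, and Lebesgue's theorem for a.e.\ differentiability---but your edge-scaling argument is tidier analytically, at the cost of living in the graphon world and needing the (standard) translation back to graph sequences that you flag at the end. The paper's vertex-addition argument stays entirely in the sequence language used elsewhere in the paper, at the cost of messier estimates on the perturbed coordinates.
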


\begin{proof}
    We start by arguing that both $c_{s,t}(x)$ and $C_{s,t}(x)$ are decreasing, similarly to Lemma~2.3 in~\cite{LiuMubayiReiher_2021}.
    Let $(G_n)_{n \in \NN}$ be a sequence of graphs of order $n$ realizing a point $(x, y) \in [0,1]^2$ with $x < 1$.
    Consider the sequence $(G_n')_{n \in \NN}$ obtained by fully connecting a clique of size $\lceil \beta \, n \rceil$ to $G_n$ for some fixed $\beta > 0$ and any $n \in \NN$.
    Let us determine the tuple $(x',y')$ realized by that sequence.
    As no new independent sets of size $s$ are created in $G_n'$ we have $k_s(\overline{G_n'}) = k_s(\overline{G_n})$.
    On the other hand, for each $0 \le \ell \le t-1$, we get at least $k_\ell(G) \cdot \binom{\lceil \beta n \rceil}{t-\ell}$ new $K_t$ in $G_n'$ and hence 
    \[
        k_t({G'_n}) \ge \sum_{\ell=0}^t k_\ell({G_n}) \binom{\lceil \beta n \rceil}{t-\ell} \, .
    \]
    As every copy of $K_t$ in $G_n'$ was already contained in $G_n$ or contains at least one of the new vertices, we have $k_t({G'_n}) \le k_t({G_n})+ \lceil \beta n \rceil \binom{n+\lceil \beta n \rceil}{t-1}$.
    Combining this, dividing by $\binom{n+\lceil \beta n \rceil}{s}$ and $\binom{n+\lceil \beta n \rceil}{t}$ respectively, and taking limits, we get
    \begin{equation}
    \label{eq:xyprimexy}
     y' = \frac{y}{(1+\beta)^s} \quad \text{and} \quad
        \frac{x + t\beta \, (1 + \beta)^{t-1}}{(1 + \beta)^t} \ge x' \ge \frac{\beta^t + t \beta^{t-1}+\sum_{\ell=2}^t \binom{t}{\ell} x \beta^{t-\ell}}{(1 + \beta)^t} \, ,
    \end{equation} 
    where for the last inequality we use that $\liminf_{n \to \infty}k_\ell(G_n)\binom{\lceil \beta n \rceil}{t-\ell}/\binom{n}{t} \ge x \binom{t}{\ell} \beta^{t-\ell}$.
    Using these bounds we can easily find a $\beta = \beta(\varepsilon) > 0$ for any $\varepsilon > 0$ such that $x < x' \leq x + \varepsilon$, where $x<x'$ holds as $\beta^t + t \beta^{t-1}+\sum_{\ell=2}^t \binom{t}{\ell} x \beta^{t-\ell} > x (1+\beta)^t$ by the binomial Theorem.
    Since $y' < y$ for $\beta > 0$, the fact that $c_{s,t}(x)$ is decreasing follows.
    A similar argument likewise establishes that $C_{s,t}(x)$ is decreasing.
    
    Given the monotonicity of $c_{s,t}(x)$ and $C_{s,t}(x)$ as well as their bounded domain of $[0,1]$, the fact that they are almost everywhere differentiable immediately follows.
    Regarding continuity, we note that both left- and right-hand limits exist due to monotonicity.
    Since $c_{s,t}$ is the decreasing lower bounding curve of a compact domain, it must also be right-continuous and left-continuity of $C_{s,t}(x)$ likewise follows.
    To establish left-continuity for $c_{s,t}(x)$ (and right-continuity of $C_{s,t}(x)$) we let $x_0 \in (0,1)$ and $y_0=\lim_{x \nearrow x_0} c_{s,t}(x)$.
    By monotonicity $c_{s,t}(x_0) \le y_0$, so let us assume that $c_{s,t}(x_0)=y' < y_0$.
    Let $\beta>0$ be small enough such that $y_0/(1+\beta)^s > y'$.
    Then, in view of \cref{eq:xyprimexy}, choose $\alpha>0$ small enough such that
    \[\frac{\beta^t+t \beta^{t-1} + \sum_{\ell=2}^t \binom{t}{\ell} (x_0-\alpha) \beta^{t-\ell}}{(1+\beta)^t} > x_0 \, ,\]
    which is again possible by the binomial Theorem.
    As $c_{s,t}(x_0-\alpha) \ge y_0$ we get with monotonicity and \cref{eq:xyprimexy} that $c_{s,t}(x_0)\ge y_0/(1+\beta)^s > y'$.
    This contradicts our assumption and establishes left-continuity of $c_{s,t}(x)$.
\end{proof}

$C_{s,t}(x)$ is the easier of the two to establish and when $\min \{s,t\} = 2$ is precisely given by the Kruskal-Katona theorem, which states that if $k_s(G) / {n \choose s} = \alpha$ then $k_t(G) / {n \choose t} \leq \alpha^{t / s}$~\cite{Kruskal1963, Katona1968} so that for $s = 2$ if $k_2(\overline{G}) / {n \choose 2} = x$ and hence $k_2(G) / {n \choose 2} = 1-x$, then $k_t(G) / {n \choose t} \leq (1-x)^{t / 2}$. It was more generally determined for arbitrary $s,t \geq 2$ by Huang et al.~\cite{Huang_2016}, who showed that the maximum is always achieved either by a clique with additional isolated vertices or by the complement of this graph.
More precisely,
\begin{equation}
    \label{eq:Cst}
    C_{s,t}(x) = \max\{(1-x^{1/s})^t+t \, x^{1/s} \, (1-x^{1/s})^{t-1} , \, (1-z)^t \},
\end{equation}
where $z$ is the unique root of $z^s + sz^{s-1}(1-z ) = x$ in $[0,1]$.
Note that this is differentiable except for at the point where the curves given by the two constructions meet.

\begin{figure}
\captionsetup{width=0.4\textwidth, font=scriptsize}

    \begin{minipage}[b]{.49\textwidth}
    \centering
    \begin{tikzpicture}[scale=50]
        \draw[scale=0.12, domain=0:1, smooth, variable=\x, black] plot ({\x}, {(1-\x)^(4/2)});
        \draw [<->,thick] (0,0.13) node (yaxis) [above] {$y$} |- (0.13,0) node (xaxis) [right] {$x$};
        \coordinate (a1) at (.06,0);
        \coordinate (a2) at (.06/2,0.015);
        \coordinate (a3) at (.06/3,0.023);
        \coordinate (a4) at (.06/4,0.031);
        \coordinate (a5) at (.06/5,0.039);
        \coordinate (a6) at (.06/6,0.047);
        \coordinate (a7) at (.06/7,0.055);
        \coordinate (a0) at (0,0.12);
        \coordinate (a8) at (0.12,0);
        
        \draw[thick, black] (a1) to[out=150,in=-20] (a2);
        \draw[thick, black] (a2) to[out=130,in=-30] (a3);
        \draw[thick, black] (a3) to[out=110,in=-40] (a4);
        \draw[thick, black] (a4) to[out=100,in=-50] (a5);
        \draw[thick, black] (a5) to[out=95,in=-60] (a6);
        \draw[thick, black] (a6) to[out=90,in=-70] (a7);
        
        \draw[dashed] (a7)--(a0);
        
       \foreach \i in {0,...,8}
       {\fill[red] (a\i) circle (0.03pt);}
        
        \node[right = 0.1 of a0] {\tiny $(0,1)$};
        \node[above = 0.1 of a8] {\tiny $(1,0)$};
        \node[right = 0.01 of a4] {\tiny $\left(r^{-1},\tfrac{r^{\underline{t-1}}}{r^{t-1}}\right)$
        };
        \node[above = 0.1 of a1] {\tiny $\left(\tfrac{1}{t-1},0\right)$};
    \end{tikzpicture}
    \captionof{figure}{The whole region $\Omega_{2,t}$.
    The lower bound $c_{2,t}(x)$ as determined by Razborov~\cite{Razborov_2008}, Nikiforov~\cite{Nikiforov_2011}, and Reiher~\cite{Reiher_2016} and the upper bound $C_{2,t}(x)=(1-x)^{t/2}$ follows from \cref{eq:Cst}.
    The red dots are given by a balanced complete $r$-partite graph for $r=t-1,\dots$ and the connections come from an interpolation between them.
    }
    \label{fig:c_23}
    \end{minipage}
    \begin{minipage}[b]{.49\textwidth}
	    \centering
	     \begin{tikzpicture}[scale=50]
	     
	     \draw[scale=0.12, domain=0:.65, smooth, variable=\x, black] plot ({\x*\x*\x}, {(1-\x)*(1-\x)*(1-\x)+3*(1-\x)*(1-\x)*\x});
        \draw[scale=0.12, domain=0:.65, smooth, variable=\x, black] plot ({(1-\x)*(1-\x)*(1-\x)+3*(1-\x)*(1-\x)*\x},{\x*\x*\x});
	     
        \draw [<->,thick] (0,0.13) node (yaxis) [above] {$y$
        } |- (0.13,0) node (xaxis) [right] {$x$
        };
        \coordinate (a1) at (0,0.03);
        \coordinate (a2) at (0,0.12);
        \coordinate (a3) at (0.0335,0.0335);
        \coordinate (a4) at (0.12,0);
        \coordinate (a6) at (0.03,0); (a1)--(a2)--(a3)--(a4)--(a5)--(a5b)--(a6);
        \draw[thick] (0,0.03)--(0.03,0);
        
        \fill[red] (a1) circle (0.03pt);
        \fill[red] (a2) circle (0.03pt);
        \fill[red] (a3) circle (0.03pt);
        \fill[red] (a4) circle (0.03pt);
        \fill[red] (a6) circle (0.03pt);
        
        \node[right = 0.1 of a1] {\tiny $\left(0,\tfrac{1}{4}\right)$};
        \node[right = 0.1 of a2] {\tiny $\left(0,1\right)$};
        \node[right = 0.1 of a3] {\tiny $\sim\left(0.278,0.278\right)$};
        \node[above = 2pt of a4] {\tiny $\left(1,0\right)$};
        \node[above right = 2pt and 0pt of a6] {\tiny $\left(\tfrac14,0\right)$};
    \end{tikzpicture}
    \captionof{figure}{The whole region $\Omega_{3,3}$ as described by Huang et~al.~\cite{huang_2014}.
    The lower bound is $c_{3,3}(x) = 1/4$ for $x \in [0,1/4]$ as in \cref{obs:c_33}.
    The upper bound $C_{3,3}(x)$ is given by the two functions from \cref{eq:Cst} intersecting at the red dot, where it is not differentiable.
    \\
        }
    \label{fig:c_33}
    \end{minipage}
\end{figure}   

On the other hand, much less is known about $c_{s,t}(x)$.
Clearly $c_{s,t}(0)=g_{s,t}$ and $c_{s,t}(x)=0$ if and only if $x \ge g_{t,s}$, that is $(0, g_{s,t})$ and $(g_{t,s}, 0)$ are the points where the curve $c_{s,t}(x)$ intersects the axes when $0 \leq x \leq g_{t,s}$.
Moreover, $c_{s,t} = \min_{x} c_{s,t}(x) + x$ and therefore $c_{s,t}(x) \ge c_{s,t}-x$. Given \cref{thm:region} and \cref{thm:curves}, we can also equivalently define it without needing to introduce $\Omega_{s,t}$ as
\begin{equation*}
    c_{s,t}(x) = \lim_{n \to \infty} \min \left\{ \frac{k_s(\overline{G})}{\binom{n}{s}} : |G|=n, \frac{k_t(G)}{\binom{n}{t}}\leq x \right\}.
\end{equation*}
For $s = 2$ one is interested in the minimum possible density of cliques of size $t$ in a graph of given edge density and in this case $c_{2,t}(x)$ was completely determined; Razborov~\cite{Razborov_2008} gave an answer for $t=3$, Nikiforov~\cite{Nikiforov_2011} for $t=4$, and Reiher~\cite{Reiher_2016} for arbitrary $t$.
See \cref{fig:c_23} for an illustration of $c_{2,t}(x)$ and $C_{2,t}(x)$.
We note that Liu, Pikhurko, and Staden~\cite{Hong_2017} establish stability and exactness of $c_{2,3}(x)$ for $x \in [1/2,1]$ improving on previous results of Pikhurko and Razborov~\cite{Pikhurko_2017} and Lovász and Simonovits~\cite{Lovasz_1983}, where the latter also covers $c_{2,t}(x)$ at and slightly below the Turán constructions.
A simple deterministic construction also shows that the lower bound by Goodman for $s=t=3$ is tight in this more general case, establishing that $c_{3,3}(x)$ is in fact linear and can be obtained by interpolating between extremal constructions similar as for $c_{2,t}(x)$.

\begin{lemma}
    \label{obs:c_33}
    We have $c_{3,3}(x) = 1/4 -x$ for all $x \in [0,1/4]$.
\end{lemma}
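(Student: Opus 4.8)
\medskip
\noindent\textbf{Proof strategy.}
The plan is to prove the two bounds $c_{3,3}(x)\ge 1/4-x$ and $c_{3,3}(x)\le 1/4-x$ separately: the first from Goodman's count of monochromatic triangles, and the second from an explicit symmetric two-block construction. On $[0,1/4]$ this linear value agrees with $\max\{1/4-x,0\}$, i.e.\ it is exactly the decreasing segment of the lower bounding curve.

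For the lower bound I would invoke Goodman's identity~\cite{Goodman_1959}: every graph $G$ on $n$ vertices satisfies
\[
    k_3(G)+k_3(\overline{G}) \;=\; \binom{n}{3}-\tfrac12\sum_{v\in V(G)} d_G(v)\,(n-1-d_G(v)) \;\ge\; \binom{n}{3}-\frac{n(n-1)^2}{8},
\]
where the inequality uses $d\,(n-1-d)\le (n-1)^2/4$. Hence any $n$-vertex graph with $k_3(G)/\binom{n}{3}\le x$ has $k_3(\overline{G})/\binom{n}{3}\ge 1-\frac{n(n-1)^2/8}{\binom{n}{3}}-x$, and the right-hand side tends to $1/4-x$. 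Plugging this into the limit characterisation $c_{3,3}(x)=\lim_{n\to\infty}\min\{k_3(\overline{G})/\binom{n}{3}:|G|=n,\ k_3(G)/\binom{n}{3}\le x\}$ recorded above yields $c_{3,3}(x)\ge 1/4-x$ for all $x\in[0,1/4]$.

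For the upper bound I would use the following construction. For $p\in[0,1]$ let $W_p$ be the weighted blow-up of the two-vertex weighted graph with vertex weights $1/2,1/2$, loop weights $p,p$ and edge weight $1-p$; concretely, split $[0,1]$ into intervals $I_1,I_2$ of length $1/2$ and put $W_p\equiv p$ on $I_1^2\cup I_2^2$ and $W_p\equiv 1-p$ on $(I_1\times I_2)\cup(I_2\times I_1)$. This graphon is realised by an honest sequence of graphs: split $[n]$ into two halves, place a quasirandom graph of density $p$ in each half and a quasirandom bipartite graph of density $1-p$ between them (equivalently, take $W_p$-random graphs). Splitting a uniform random triple of points in $[0,1]$ according to how many fall in $I_1$ gives the triangle density
\[
    t(K_3,W_p) \;=\; \tfrac14 p^3+\tfrac34 p(1-p)^2 \;=\; p^3-\tfrac32 p^2+\tfrac34 p \;=:\; f(p).
\]
Complementation swaps $p\leftrightarrow 1-p$, so $\overline{W_p}=W_{1-p}$ and the independent-triangle density of the realising sequence is $f(1-p)$; a one-line computation with $p+(1-p)=1$ gives $f(p)+f(1-p)=1/4$. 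Since $f'(p)=3(p-1/2)^2\ge 0$ with $f(0)=0$ and $f(1)=1/4$, the map $f$ is a continuous nondecreasing surjection of $[0,1]$ onto $[0,1/4]$; hence for any $x\in[0,1/4]$ one picks $p$ with $f(1-p)=x$ and the corresponding sequence realises $(x,f(p))=(x,1/4-x)\in\Omega_{3,3}$, so $c_{3,3}(x)\le 1/4-x$. Combining the two bounds proves the lemma.

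The argument is elementary and I do not anticipate a real obstacle. The one point that requires care is choosing the construction so that \emph{every} vertex degree stays asymptotically $n/2$, which is exactly the equality case of Goodman's inequality; the symmetric two-block graphon $W_p$ achieves this simultaneously for all $p$. After that the only thing left is the routine verification that $t(K_3,\overline{W_p})=1/4-t(K_3,W_p)$, i.e.\ the identity $f(p)+f(1-p)=1/4$.
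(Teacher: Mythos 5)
Your proof is correct, and it reaches the result by a genuinely different route from the paper for the constructive half. For the lower bound you expand the Goodman identity and optimise the degree sum, which the paper does not repeat here: it instead relies on the remark already established a few lines earlier that $c_{s,t}(x)\ge c_{s,t}-x$, together with $c_{3,3}=c_3=1/4$, so making Goodman explicit is a small amount of redundant but harmless work. The real divergence is the matching construction. You build a two-block quasirandom graphon $W_p$ (density $p$ inside the halves, $1-p$ between them), check $t(K_3,W_p)+t(K_3,W_{1-p})=1/4$ via the polynomial identity $f(p)+f(1-p)=1/4$, and observe that $f$ sweeps $[0,1/4]$ since $f'(p)=3(p-1/2)^2\ge 0$. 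The paper instead takes a weighted blow-up of a four-vertex looped graph: two looped, non-adjacent vertices with weight $w$ each (giving two disjoint cliques), an unlooped edge with vertices of weight $1/2-w$ each, and a perfect matching joining the two pairs; the $K_3$-density is then $w^2(3-4w)$ and the $\overline{K_3}$-density $(1/2-w)^2(1+4w)$, whose sum is again $1/4$. (The paper's phrase ``$x\in[0,1/2]$ satisfies $x^2(3-4x)=x$'' overloads the symbol $x$; read it as a weight parameter $w$ with $w^2(3-4w)$ equal to the target triangle density.) Both constructions are valid and both hit the whole segment; the paper's fits the blow-up-of-a-finite-weighted-graph theme used throughout, while your two-block graphon is slightly cleaner and makes the complementation symmetry $\overline{W_p}=W_{1-p}$ immediately visible. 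The only thing you should state rather than leave implicit is that $W_p$-random graphs converge in triangle density to $t(K_3,W_p)$, though as you note this is standard quasirandomness.
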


\begin{proof}
    A possible construction of a sequence of $n$-vertex graphs goes as follows.
    For some $\eta \in [0,1/2]$ we take two cliques of size $\lceil \eta n \rceil$ and two independent sets of size $\lfloor (1/2-\eta)n \rfloor$.
    Each clique is fully connected to a different one of the independent sets and also the independent sets are fully connected.
    The density of triangles in this graph approaches $2 \eta^3 + 6 \eta^2(1/2-\eta)$ and the density of independent sets of size three approaches $2 (1/2-\eta)^3 + 6 (1/2-\eta)^2\eta$. Each of these quantities covers $[0,1/4]$ for $\eta \in [0,1/2]$ and together they sum up to $1/4$.
\end{proof}

Together with $C_{3,3}(x)$ as given by~\cite{Huang_2016}, this determines $\Omega_{3,3}$ as illustrated in \cref{fig:c_33}. This region was in fact already earlier established by Huang et~al.~\cite{huang_2014}, who relied on a probabilistic construction for every point in the region, rather than interpolating between the two bounding curves as in the proof of \cref{thm:region}.
Beyond that very little is known about the shape of $c_{s,t}(x)$, providing some additional motivation for studying the parameters $c_{s,t}$ and $g_{s,t}$ stated in the introduction.

Recall that $c_{s,t}(x) \ge c_{s,t}-x$, so \cref{thm:4-ramsey_multiplicity}, \cref{thm:5-ramsey_multiplicity}, and \cref{thm:offdiagonal_ramsey_multiplicity} imply linear lower bounds of $c_{s,t}(x)$ for some specific values of $s$ and $t$.
Let us take a closer look at the smallest open case, that is $s=3$ and $t=4$. 
Calculating the $\overline{K_3}$ and $K_4$ density of the sequence of blow-ups\footnote{A formal definition of a blow-up will be given in the next section in \cref{def:blowup}.} of the Schl\"afli graph, we get that $c_{3,4}(41 \cdot 3^{-6}) = 320 \cdot 3^{-8}$, establishing one precise value of the curve besides the ones on the axes.
Additionally, we can show that $c_{3,4}(x)$ is not differentiable at this point by establishing a second tight lower bound using a differently weighted version of $c_{3,4}$, see \cref{sec:flag_algebra_practical} for more details.

Noting that by Das et al.~\cite{DasEtAl_2013} and Pikhurko and Vaughan~\cite{PikhurkoVaughan_2013} the value of $g_{3,4} = c_{3,4}(0) = c_{4,3}(1/9)$ is determined by the sequence of blow-ups of $K_3$ and the value of $g_{4,3} = c_{3,4}(3/25) = c_{4,3}(0)$ by that of a $C_5$ with loops at all vertices, it also seems reasonable to ask if the Schl\"afli graph could mark a first {\lq}extremal point{\rq} of the curve $c_{3,4} (x)$ that does not lie on either axis, the same way that $K_{t}$ does for $c_{2,t}(x)$, or if alternatively the blow-up sequence of another vertex-transitive graph on fewer than $27$ vertices marks such a point.\footnote{We are intentionally vague here with our notion of extremal points. We likely expect a, possibly infinite but countable, set of points $x$ in $[0,1]$ where $c_{3,4}(x)$ is not differentiable and that behavior corresponds to a change in the underlying graph construction.}

\begin{wrapfigure}{l}{.5\textwidth}
\captionsetup{width=.5\textwidth, font=scriptsize}
     \begin{tikzpicture}[scale=25]
     
     \draw[scale=.51, domain=.385:.79, smooth, variable=\x, black] plot ({\x*\x*\x}, {(1-\x)*(1-\x)*(1-\x)*(1-\x)+4*(1-\x)*(1-\x)*(1-\x)*\x});

    \draw [<->,thick] (0,0.26) node (yaxis) [above] {$y$} |- (0.26,0) node (xaxis) [right] {$x$};
    \coordinate (a1) at (0,3/25);
    \coordinate (a2) at (3/200*.8,0.099*.85);
    \coordinate (a3) at (1/36*.8,577/6912*.75);
    \coordinate (a4) at (41/729*.75,320/6561*.75);
    \coordinate (a5) at (0.075*.8,0.03*.8);
    \coordinate (a5b)at (0.08*.8,0.027*.8);
    \coordinate (a6) at (1/9,0);
    \draw [dashed] (a1)--(a2)--(a3)--(a4)--(a5)--(a5b)--(a6);
    \draw[thick, blue] (0,689/6561*.75)--(689/6561*.75,0);
    \draw[thick, dotted, blue] (0,0.098*.75)--(0.112*.75,0);
    
    \fill[red] (a1) circle (0.06pt);
    \fill[gray] (a2) circle (0.06pt);
    \fill[gray] (a3) circle (0.06pt);
    \fill[red] (a4) circle (0.06pt);
    \fill[gray] (a5) circle (0.06pt);
    \fill[gray] (a5b) circle (0.06pt);
    \fill[red] (a6) circle (0.06pt);
    
    \node[above right = 0.1 of a1,fill=white,inner sep=0pt] {\tiny $\left(0,\tfrac{3}{25}\right)$ - looped $C_5$};
    \node[above right = 0.05 of a2,fill=white,inner sep=0pt] {\tiny $\left(\tfrac{3}{200},\tfrac{6347}{64000}\right)$ - $40$ vertices};
    \node[above right = 0.05 of a3,fill=white,inner sep=0pt] {\tiny $\left(\tfrac{1}{36},\tfrac{577}{6912}\right)$ - $24$ vertices};
    \node[above right = 5pt and -5pt of a4,fill=white,inner sep=0pt] {\tiny $\left(\tfrac{41}{729},\tfrac{320}{6561}\right)$ - Schl\"afli graph};
    \node[above right = 2pt and 0pt of a5,fill=white,inner sep=0pt] {\tiny $\left(\tfrac{563}{8192},\tfrac{2469}{65536}\right)$ - $128$ vertices};
    \node[above right = -7pt and  10pt of a5b,fill=white,inner sep=0pt] {\tiny $\left(\tfrac{437}{6272},\tfrac{33}{896}\right)$ - $112$ vertices};
    \node[above right = -3pt and 8pt of a6] {\tiny $\left(\tfrac19,0\right)$ - $K_3$};
\end{tikzpicture}
\caption{The known bounds on the region $\Omega_{3,4}$. The blue line is the linear lower bound $y=c_{3,4}-x$, the blue dotted line is an additional linear lower bound for $c_{3,4}(x)$, the red dots represent optimal constructions, and the grey dots represent additional constructions. The upper bound $C_{3,4}(x) = (1-x^{1/3})^4+4x(1-x^{1/3})^3$ follows from \cref{eq:Cst}.}
\label{fig:c_34}
\end{wrapfigure}

Surprisingly, some experimentation reveals that neither option seems to hold true:  there is no sequence of blow-ups of a vertex-transitive graph on up to 47 vertices that determines a point in convex position with the points given by the Schl\"afli graph and $K_3$, where one might expect points of discontinuity to be more easily described. There are however various blow-up sequences of vertex transitive graphs on at least $112$ vertices that are in convex position with those points. On the other hand, between the points given by the Schl\"afli graph and the looped  $C_5$, where we would expect constructions to become increasingly complex, we first find a vertex-transitive graph on only $24$ vertices determining a point in convex position with the two other points. Lest one assume that this might indicate a pattern, there also exists a vertex-transitive graph on $40$ vertices that determines a point in convex position with this point and the one given by the looped $C_5$. Our results are illustrated in \cref{fig:c_34}.
We note that stability also holds for $g_{3,4}$ and $g_{4,3}$~\cite{DasEtAl_2013,PikhurkoVaughan_2013} and \cref{thm:offdiagonal_ramsey_multiplicity} establishes the same for the Schläfli graph.

\section{Constructive upper bounds} \label{sec:upper_bounds}

All upper bounds on $c_t$, $c_{s,t}$, $g_{s,t}$, and $c_{s,t}(x)$ described in the introduction and previous section rely on explicit constructions. When only the asymptotic value is of interest, this usually implies considering the blow-up of a fixed finite graph to obtain a sequence of graphs with increasing order that have some desired property.

\begin{definition}\label{def:blowup}
    For a graph $C$ on vertex set $\{1, \ldots, n\}$ and integers $m_1, \ldots, m_n \in \NN_0$, the \emph{blow-up} $C[m_1, \ldots, m_n]$ is the graph with vertex set $\bigcup_{i=1}^{n} \{(i,v) : 1 \leq v \leq m_i \}$ in which two distinct vertices $(i_1, v_1)$ and $(i_2, v_2)$ are connected if and only if $i_1$ and $i_2$ are connected in $C$. When $|m_i - m_j| \leq 1$ for $1 \leq i,j \leq n$ we say that the blow-up is \emph{balanced}. When $m_i = m$ for $1 \leq i \leq n$, then we call $C[m] = C[m_1, \ldots, m_n]$ the \emph{$m$-fold blow-up} of $C$ and refer to $(C[m])_{m \in \NN}$ as the \emph{blow-up sequence of $C$}.
\end{definition}

This means that a vertex of $C$ is replaced by an independent set of size $m$ in $C[m]$ if that vertex does not have a loop, or by a clique of size $m$ otherwise. Note that our definition ensures that the blow-up is always a simple graph even when $C$ contains loops. It follows that the complement of the blow-up of a graph $C$ is equal to the blow-up of the complement of $C$ with loops added at every vertex. We denote this type of complement as the \emph{looped complement} of a graph.\footnote{It is actually significantly easier to think about these problems in terms of monochromatic homomorphic copies of fully-looped cliques in a two-coloring of a fully-looped (and possibly vertex-weighted) complete graph, but to stay consistent with previous literature we are counting subgraphs of a simple graph and its complement.}

The blow-up of a graph has certain properties that are favorable to this type of problem. In particular, if the graph has no loops then its clique number is equal to that of $C$, that is $\omega(C[m]) = \omega(C)$ for any $m \in \NN$. More broadly, we have the following result due to Thomason~\cite{Thomason_1989}.
\begin{lemma}\label{lemma:thomson_blow-up}
    For any simple graph $C$ of order $n$ and for $m \in \NN$ going to infinity, we have
    \begin{equation*}
        k_t(C[m]) = \frac{t! \, k_t(C)}{n^t} {mn \choose t} (1 + o(1))
    \end{equation*}
    as well as
    \begin{equation*}
        k_t(\overline{C[m]}) = \frac{\sum_{j=1}^t j! \, S(t,j) \, k_j(\overline{C})}{n^t} {mn \choose t} (1 + o(1))
    \end{equation*}
    where $S(t,j) = \sum_{i=0}^{j} (-1)^i {j \choose i} (j-i)^t / j!$ is the Stirling number of the second kind.
\end{lemma}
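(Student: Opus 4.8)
The plan is to count cliques through ordered tuples and then isolate the leading power of $m$. Write $N=mn$ for the order of $C[m]$. As $t$ and $n$ stay fixed while $m\to\infty$, we have $\binom{N}{t}=N^t/t!\,(1+o(1))$, so both claimed identities will follow once we establish the two asymptotics
\[ k_t(C[m]) = k_t(C)\,m^t\,(1+o(1)) \qquad\text{and}\qquad k_t(\overline{C[m]}) = \tfrac{1}{t!}\Bigl(\textstyle\sum_{j=1}^t j!\,S(t,j)\,k_j(\overline{C})\Bigr)m^t\,(1+o(1)). \]
For a graph $G$ write $I_t(G)=t!\,k_t(G)$ for the number of ordered $t$-tuples of distinct, pairwise adjacent vertices; I will compute $I_t(C[m])$ and $I_t(\overline{C[m]})$ directly from the definition of the blow-up.

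For the clique count this is essentially immediate. A tuple $\bigl((i_1,v_1),\dots,(i_t,v_t)\bigr)$ of distinct vertices of $C[m]$ is pairwise adjacent precisely when the colours $i_1,\dots,i_t$ are pairwise adjacent in $C$; in particular they are pairwise distinct and span a clique of $C$, and conversely every ordered $t$-clique of $C$ paired with an arbitrary choice of $v_1,\dots,v_t\in\{1,\dots,m\}$ yields such a tuple. Hence $I_t(C[m])=I_t(C)\,m^t=t!\,k_t(C)\,m^t$ exactly, which gives the first asymptotic.

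The independent-set count is where the combinatorial bookkeeping lives. Here $I_t(\overline{C[m]})$ counts ordered $t$-tuples of distinct vertices of $C[m]$ that are pairwise \emph{non}-adjacent, and two distinct vertices $(i_a,v_a),(i_b,v_b)$ are non-adjacent in $C[m]$ iff $i_a=i_b$ or else $i_a\neq i_b$ with $i_a\not\sim i_b$ in $C$. Thus such a tuple amounts to a colour map $[t]\to V(C)$, $a\mapsto i_a$, whose image $I$ is an independent set of $C$, together with a choice, for each colour $i$, of $|\{a:i_a=i\}|$ \emph{distinct} values in $\{1,\dots,m\}$. I would organise this by the fibre structure of the colour map: if $|I|=j$ then the map is a surjection onto $I$, of which there are $j!\,S(t,j)$, while there are $k_j(\overline{C})$ admissible images $I$; and if colour $i$ then occurs $c_i$ times, the distinct $v$-coordinates can be chosen in $\prod_i m^{\underline{c_i}}$ ways, where $m^{\underline{c}}=m(m-1)\cdots(m-c+1)=m^c\,(1+o(1))$, the error being uniform because there are only boundedly many occurrence patterns. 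Summing over $j$, over the $k_j(\overline{C})$ choices of $I$, and over the $j!\,S(t,j)$ surjections onto $I$ then yields $I_t(\overline{C[m]})=\bigl(\sum_{j=1}^t j!\,S(t,j)\,k_j(\overline{C})\bigr)m^t\,(1+o(1))$, which is the second asymptotic.

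The only genuine obstacle is the middle step of the independent-set count: unlike for cliques, the colour map need not be injective, and the point is to see that the count factors as ``choose the independent image $I$'' times ``choose a surjection $[t]\twoheadrightarrow I$'' times ``choose the $v$-coordinates'', which is exactly what produces the Stirling numbers $S(t,j)$. Both sources of the $(1+o(1))$ error — replacing each falling factorial $m^{\underline{c_i}}$ by $m^{c_i}$ and replacing $\binom{N}{t}$ by $N^t/t!$ — are harmless since $t$ and $n$ are fixed, so the remaining details are routine.
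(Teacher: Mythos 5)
Your proof is correct and is essentially the paper's argument made explicit: the paper's two-sentence sketch appeals to the blow-up invariance of (non-injective) homomorphism densities of $K_t$ and the asymptotic agreement between homomorphism and injective counts, and your ordered-tuple computation carries out exactly this, with the surjections onto an independent image $I$ of size $j$ accounting for the $j!\,S(t,j)$ factor and the replacement of $m^{\underline{c_i}}$ by $m^{c_i}(1+o(1))$ playing the role of the homomorphism-versus-injective approximation.
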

This is a direct consequence of the fact that the fraction of not necessarily injective homomorphic copies of cliques of size $t$ in a graph stays the same under blow-up. The number of homomorphic copies asymptotically matches the number of injective copies, accounting for the $1+o(1)$ term. This statement also readily generalizes to arbitrary graphs and not just cliques as well as to non-balanced blow-ups. From a more practical perspective, \cref{lemma:thomson_blow-up} gives an efficient way to compute an upper bound on $c_{s,t}$ from any graph $C$, as well as an upper bound on $g_{s,t}$ from any graph $C$ with $\omega(C) \leq t-1$, through the blow-up sequence $(C[m])_{m \in \NN}$. We will describe several different approaches that we employed in order to find constructions whose blow-up sequence give good upper bounds for the problems presented in the introduction.

\subsection{The discrete optimization problems}

\cref{lemma:thomson_blow-up} motivates considering the discrete optimization problem
\begin{equation}\label{eq:opt_problem}
    \argmin_{\bs \in \{0,1\}^N} c(\bs),
\end{equation}
where we are minimizing a cost $c: \{0, 1\}^N \to \RR$ over a discrete search space consisting of binary vectors $\{0, 1\}^N$ for some $N \in \NN$. In particular, we considered encoding both simple graphs as well as Cayley graphs through binary vectors, though many other combinatorial structures can expressed this way, see for example the recent work of Wagner~\cite{Wagner_2020}.

\paragraph{The graph search space.} When constructing arbitrary graphs on $n$ vertices, a state $\bs \in \{0,1\}^N$ represents the edges of that graph, so that we have $N = {n \choose 2}$. We associate with each edge an entry $s_i$ in $\bs$ that indicates whether the edge is in the graph or not. Denoting the constructed graph by $C = C(\bs)$, the cost function is simply given by
\begin{equation}\label{eq:cost_function}
    c(\bs) = \lim_{m \to \infty}  k_s(\overline{C[m]}) +  \lambda \, k_t(C[m]) \, ,
\end{equation}
where $\lambda = 1$ in the case of $c_{s,t}$, except when searching for particular improvements on the bounds on $c_{s,t}(x)$, and $\lambda \gg 1$ in the case of $g_{s,t}$ to act as a Lagrangian multiplier to ensure that the constraint $k_t(C[m]) = 0$ is fulfilled. This cost function is easily calculated through \cref{lemma:thomson_blow-up}.

\paragraph{The Cayley graph search space.} The effectiveness of any search method will primarily be governed by the number $N$ of variables used to construct the graph. However, at least for the motivating $K_4$-Ramsey multiplicity problem, we found that the quality of a construction was strongly dependent on its number of vertices $n$. Since in the general graph space $N={n\choose 2}$ is quadratic in $n$, searching for constructions becomes intractable when the number of vertices reaches the fourties. In order to access graphs beyond that, we explored the family of Cayley graphs, for which more efficient description is available.

Given a group $\bf G$ and a set $S \subseteq {\bf G}\setminus \{1\}$ satisfying $S = S^{-1}$, the corresponding Cayley graph $C=C({\bf G}, S)$ has vertex set $\bf G$ and two vertices $g_1, g_2$ are connected by an edge if $g_1 s = g_2$ for some $s \in S$. A state $\bs \in \{0,1\}^N$ represents the generating set $\bigcup_{{\bf s}_A=1} A$ where each subset $A=\{g, g^{-1}\}$ for $g\in {\bf G}$ corresponds to an entry ${\bf s}_{A}$ in $\bs$.
Denoting the Cayley graph constructed this way by $C = C(\bs)$, the relevant cost function is again given by \cref{eq:cost_function}.

The number $N$ of variables needed to encode a Cayley graph is between $|{\bf G}|/2$ and $|{\bf G}| - 1$. This is only linear in the number of vertices, which allows us to search for graphs an order of magnitude larger.
Additionally, Cayley graphs form a substantial subset of vertex-transitive graphs~\cite{HoltRoyle_2020, HoltRoyleTracey_2021}, making them a highly relevant source of constructions. Both circumstantial evidence and the following lemma, stating that in an optimal construction, the number of $K_4$ and $\overline{K_4}$ at each vertex is asymptotically the same, support this relevance. Given a graph $G$ and set of vertices $S \subseteq V(G)$, let $k_t(G, S)$ denote the number of cliques of size $t$ in $G$ containing all vertices of $S$. The proof is based on that of \cite[Proposition 8]{balogh2017rainbow}.
\begin{lemma}
    For any $t \geq 3$ and sequence of graphs $(G_n)_{n \in \NN}$ of order $n$ satisfying $\lim_{n \to \infty} k_t(\overline{G_n}) + k_t(G_n) = c_t$, we have 
    \begin{equation*}
        \max_{u,v \in V(G_n)} \big| k_t(\overline{G_n},\{u\}) + k_t(G_n,\{u\}) - k_t(\overline{G_n},\{v\}) - k_t(G_n,\{v\}) \big| / {n-1 \choose t-1} = o\left( 1 \right).
    \end{equation*}
\end{lemma}
\begin{proof}
    Let us write
    \begin{equation*}
        \kappa(G, S) = \big( k_t(G, S) + k_t(\overline{G}, S) \big) / {n-|S| \choose t-|S|}
    \end{equation*}
    as well as $\kappa(G) = \kappa (G, \emptyset)$. Assume there exist vertices $u_n, v_n \in V(G_n)$ and $\varepsilon > 0$ satisfying $\kappa(G_n, u_n) - \kappa(G_n, v_n) \geq \varepsilon$ for all $n \in \NN$. Consider the sequence of graphs $G_n'$ obtained by deleting $u_n$ from $G_n$ and duplicating $v_n$ as $v_n'$ along with its relations, where we choose to include $v_nv_n'$ in $E(G_n')$. We have
    \begin{align*}
        \kappa(G'_n) - \kappa(G_n) & \leq \kappa(G_n,\{v_n\}) - \kappa(G,\{u_n\}) + o(1) \leq - \varepsilon + o(1),
    \end{align*}
    where the $o(1)$ term comes from all cliques containing both $v_n$ and $u_n$ or $v_n'$. This contradicts the assumption that $\lim_{n \to \infty} k_t(\overline{G}) + k_t(G) = c_t$.
\end{proof}

\subsection{Heuristic search methods.}

Since the early 80s, many heuristic methods have been suggested for solving NP-hard optimization problems, particularly for combinatorial problems with discrete search spaces. We focus on two well-established methods that gained traction in the search for Ramsey numbers~\cite{Exoo_1998, ExooTatarevic_2015, MckayRadziszowski_1997} and provide an accessible introduction to both, complementing a recent growing interest in computational means in Extremal Combinatorics~\cite{LidickyPfender_2017, Rowley_2022, Wagner_2020, Wagner_2021}. Although heuristics inherently lacking global optimality guarantees, matching flag algebra lower bounds and stability results can, in this particular case, establish the optimality and uniqueness of heuristic solutions.  For more information on the subject of heuristic search methods, we refer the interested reader to the handbook of Gendreau and Potvin~\cite{GendreauPotvinEtAl_2010}. We also discuss the efficacy of a more recent Machine Learning-based approach compared to these methods in \cref{sec:ml-approach}.

\paragraph{Simulated Annealing.} Simulated Annealing (SA) is a probabilistic technique that can be interpreted as a modified local search. It accepts worse states according to a probabilistic acceptance criterion, which is modified over time to reject worse states and avoid getting trapped in local minima. SA was originally proposed by Kirkpatrick, Gelatt and Vecchi~\cite{KirkpatrickGelattVecchi_1983}, and its impact has been significant; in 2014, the original paper was listed as one of the 100 most cited scientific papers of all time~\cite{top100}. \cref{alg:simulated_annealing} describes the algorithm in pseudocode.

\begin{algorithm}\setstretch{1.0}
\caption{Simulated Annealing}\label{alg:simulated_annealing}
\smallskip
\KwData{initial state $\bs_0 \in \{0, 1\}^N$, cost function $c: \{0, 1\}^N \to \mathbb{R}$, neighborhood function $\mN: \{0,1\}^N \to \mP(\{0,1\}^N)$, number of iterations $I \in \NN$, temperatures $(t_i)_{1 \leq i \leq I}$}
\KwResult{best state found $\bs^{\star} \gets \argmin_{\bs \in \{\bs_0, \bs_1, \ldots, \bs_I\}} c(\bs) \in \{0,1\}^N$}
\For{$i\gets1$ \KwTo $I$}{
    $\bs_c \gets $ uniform random sample from $\mN(\bs_{i-1})$\\
    \eIf{$\min\big(\exp ( (c(\bs_{i-1}) - c(\bs_c)) / t_i ), 1) \geq \mathrm{rand} (0, 1)$}
    {
        $\bs_i \gets \bs_c$\\
    }{
        $\bs_i \gets \bs_{i-1}$\\
    }
}
\end{algorithm}

To more precisely describe the algorithm, let  $\mN: \{0,1\}^N \to \mP(\{0,1\}^N)$ denote some notion of neighborhoods of the states. We restricted ourselves to considering states as neighboring if their Hamming distance is $1$. The algorithm starts with a random state $\bs_0$ and executes a fixed number of iterations $I$, where in each iteration $1 \leq i \leq I$ we pick a candidate state $\bs_c$ uniformly at random from $\mN(\bs_{i-1})$ and accept it based on the probabilistic Metropolis' criterion~\cite{metropolis1953equation}. See also Dueck and Scheuer~\cite{DueckScheuer_1990} for a variant that avoids the probabilistic nature of SA. The temperature sequence $(t_i)_{1 \leq i \leq I}$ typically converges to $0$, and SA functions more like a local search as it does. There are many details when implementing SA, and the presentation here should not be considered authoritative. Due to the relatively cheap computation of $c(\bs')$ for any $\bs' \in \mN (\bs)$, we implemented a variant of SA that avoids rejecting states by directly sampling candidates from an appropriate distribution dictated by Metropolis' criterion, as previously suggested by Greene and Supowit~\cite{GreeneSupowit_1986}.

\paragraph{Tabu Search.} Tabu Search, an even simpler search heuristic than SA, was suggested by Glover~\cite{Glover_1986, Glover_1989, Glover_1990}. Like SA, it can be viewed as a modified local search aimed at avoiding local optima and cycles. We start with a randomly initialized state $\bs_0$ and require a notion of neighborhood $\mN: \{0,1\}^N \to \mP(\{0,1\}^N)$. We execute $I$ iterations, where in each iteration $1 \leq i \leq I$, we pick the neighboring state $\bs \in \mN(\bs_{i-1})$ with the lowest associated cost and that has not been visited recently, regardless of whether it improves upon $c(\bs_{i-1})$. 
\cref{alg:tabu_search} contains the pseudo-code for Tabu Search.

\begin{algorithm}\setstretch{1.0}
\caption{Tabu Search}\label{alg:tabu_search}
\smallskip
\KwData{initial state $\bs_0 \in \{0, 1\}^N$, cost function $c: \{0, 1\}^N \to \mathbb{R}$, neighborhood function $\mN: \{0,1\}^N \to \mP(\{0,1\}^N)$, history length $\ell \in \NN$, number of iterations $I \in \NN$, }
\KwResult{best state found $\bs^{\star} \gets \argmin_{\bs \in \{\bs_0, \bs_1, \ldots, \bs_I\}} c(\bs) \in \{0, 1\}^N$}
\For{$i \gets1$ \KwTo $I$}{
    $\bs_i \gets  \argmin_{\bs \in \mN(\bs_{i-1}) \setminus \{\bs_{i-\ell}, \ldots, \bs_{i-1}\}} c(\bs)$\\
}
\end{algorithm}

There are many degrees of freedom when implementing this algorithm. One modification we made was to the history implementation: instead of storing a history of the last $\ell$ states and excluding those from the update, we store a list of the last $\ell$ modified bits and exclude any state that differs from the current one in one of those bits. This slightly increases the number of excluded states but drastically reduces the computational and implementation effort required to determine which states to exclude.

\medskip

It is crucial to have an efficient implementation of the cost function $c$ for both methods, particularly since they are often run in parallel for various initial states. In our application, we precomputed the relevant indices for all cliques, considering multiplicity, and stored the results in a matrix format. This approach allowed us to evaluate $c$ using only elementary matrix operations on a GPU, substantially improving efficiency when assessing multiple states in parallel.

\subsection{Constructions} \label{sec:constructions}

We implemented all search methods in {\tt Python} and {\tt Pytorch} and logged the results using {\tt Weights \& Biases}~\cite{wandb}. We relied on the {\tt GAP}~\cite{gap} component in {\tt SageMath} and in particular the {\tt Small Groups library} for Cayley graph constructions. We will represent graphs using their {\tt graph6} representation, a 6-bit encoding of the upper triangle of the graph adjacency matrix. This representation can be most easily decoded using {\tt SageMath} and a formal description is available at \href{http://cs.anu.edu.au/~bdm/data/formats.txt}{\nolinkurl{cs.anu.edu.au/~bdm/data/formats.txt}}. Graph descriptions too large to be included in this paper are available at \href{https://doi.org/10.5281/zenodo.6364588}{\nolinkurl{doi.org/10.5281/zenodo.6364588}}. We also provide a survey of the derivation of previous constructions and describe the process we used to obtain our bounds for additional context.

\subsubsection{Ramsey multiplicity -- \cref{thm:4-ramsey_multiplicity} and \cref{thm:5-ramsey_multiplicity}} \label{sec:proof_ramsey_multiplicity}

\paragraph{Previous constructions.}
The original counterexample by Thomason~\cite{Thomason_1989} to Erd\H{o}s' conjecture for $t \ge 4$ was given by the blow-up sequence of graphs formed by vectors in orthogonal geometries.
This construction gives $c_4 < 0.03050$ and $c_5 < 0.001769$ and Thomason improved the bound for $t = 4$ to $c_4 < 0.03030$ through a computer based local search around its two-fold blow-up. Franek and R\"odl~\cite{FranekRodl_1993} presented a class of more simply describable Cayley graphs containing a construction on $2^{10} = 1024$ vertices, also found through a computer search, whose blow-up sequence likewise disproves Erd\H{o}s' conjecture for $t=4$ by giving an upper bound of $c_4 < 0.03052$.
This was generalised by Franek~\cite{Franek_2002} and Deza, Franek, and Liu~\cite{deza2012conjecture}, giving the currently best known bounds for $c_6$, $c_7$, and $c_8$.

Thomason~\cite{Thomason_1997} slightly improved upon his constructions for $t \in \{4,5\}$ by noticing that the XOR graph product $\otimes$\footnote{Given two graphs $G_1 = (V_1, E_1)$ and $G_2 = (V_2, E_2)$, their XOR graph product $G_1 \otimes G_2$ has vertex set $V_1 \times V_2$ and two vertices $(v_1, v_2), (v_1', v_2') \in V_1 \times V_2$ are connect if and only if \emph{either} $v_1 v_1' \in E_1$ and $v_2 v_2' \notin E_2$ \emph{or} $v_1 v_1' \notin E_1$ and $v_2 v_2' \in E_2$. Here we consider loops in the graph, that is in particular $v v \notin E_i$ for $v \in V_i$ unless $G_i$ has  a loop at $v$ for $i \in \{1,2\}$. Thomason calls this graph product the \emph{tensor product} and Wolf~\cite{Wolf_2010} states that it is also known as the \emph{Cartesian product}. It seems however that the tensor product is more commonly used for the case $v_1 v_1' \in E_1$ and $v_2 v_2' \in E_2$ and the Cartesian product for the case either $v_1 v_1' \in E_1$ and $v_2 = v_2'$ or $v_1 = v_1'$ and $v_2 v_2' \in E_2$, so to avoid confusion we use the unambiguous terminology of Alon and Lubetzky~\cite{AlonLubetzky_2007}. We will also write $G^{\otimes k}$ for the $k$-fold XOR graph product of a graph $G$.} has some favorable properties for this problem, which were previously already observed by Chung and Graham~\cite{ChungGraham_1991}. In particular, by simply computing a particular vector of graph densities for two given graphs $G_1, G_2$, one can determine the corresponding vector of $G_1 \otimes G_2$ as the element-wise product of those two vectors from which one can then easily determine the relevant upper bound on $c_{s,t}$ through a vector product with some appropriate weight vector. Computationally, this gives one the opportunity to pre-calculate these density vectors for small or medium sized graphs and then cheaply compute the upper bounds given by even very large graph products. The best upper bound found this way for $c_4$ is given by the blow-up sequence of $K_4 \otimes M_4 \otimes G_{18}$, where $M_4$ denotes a perfect matching of order $4$ and $G_{18}$ the complement of $K_3^{\otimes 2}\otimes \overline{K_2}$, giving roughly $c_4 < 0.03029$, and the best upper bound for $c_5<0.001720$ is given by the blow-up sequence of $K_3 \otimes M_4^{\otimes 3}$. Thomason also observed that his original constructions from \cite{Thomason_1989} can be described as $K_4 \otimes M_4^{\otimes (t-1)}$ for $t \ge 5$ and as a subgraph of this for $t=4$.
Note that the XOR graph product in some sense works as a generalisation of the blow-up, since $G[m] = G \otimes \overline{K_m}$ for a loopless graph $G$.

The improvement of Even-Zohar and Linial~\cite{even2015note} was based on the observation that $G_{18}$ can also be seen as a blow-up of $K_2$ with a copy of $K_3 \otimes K_3$ inserted at every vertex, rather than an independent set.
This motivates the \emph{composition} $G \odot H$ of two graphs $G$ and $H$, where a copy of $G$ is placed at every vertex of $H$\footnote{Formally, given two graphs $G_1 = (V_1, E_1)$ and $G_2 = (V_2, E_2)$, their composition $G_1 \odot G_2$ has vertex set $V_1 \times V_2$ and two vertices $(v_1, v_2), (v_1', v_2') \in V_1 \times V_2$ are connect if and only if \emph{either} $v_1 v_1' \in E_1$ \emph{or} $v_1=v_1'$ and $v_2 v_2' \in E_2$.}, e.g.~$G_{18}=(K_3 \otimes K_3) \odot K_2$.
Their construction is $K_4 \otimes M_4 \otimes (K_3 \otimes K_3)^{\odot n}$ for $n$ tending to infinity, where $G^{\odot n}$ is the $n$-fold composition of $G$ with itself.
This is the only example of a construction that is not a standard blow-up and it remains unclear what the role of $K_4 \otimes M_4$ is.
While a similar decomposition followed by replacing some part with an $n$-fold composition could possibly enhance other bounds, it is computationally challenging to identify a suitable decomposition with XOR products for larger graphs, if such a decomposition even exists.

We finally note that we pushed both the approaches of Franek and R\"odl in~\cite{FranekRodl_1993}, Thomason in~\cite{Thomason_1997}, and Even-Zohar and Linial~\cite{even2015note} to what we found feasible using our capabilities and computational resources. We found no or only the most marginal of improvements, implying that these approaches have probably exhausted their full potential.

\paragraph{Constructions in \cref{thm:4-ramsey_multiplicity} and \cref{thm:5-ramsey_multiplicity}.} 

We ran heuristic searches to construct graphs on up to $40$ vertices that minimize the cost function given by \cref{eq:cost_function}. The smallest graph we found whose blow-up sequence establishes a value below $1/32$ was of order $33$, giving a value of $0.03118$. The smallest previously known such graph was described by Thomason~\cite{Thomason_1989} and was of order $36$. We also found a graph on $32$ vertices whose \emph{weighted} blow-up gives a value slightly below $1/32$. We found no further constructions on less than $40$ vertices giving any significantly better values.

Given that the blow-up sequences of small graphs seem to yield little of interest, we considered Cayley graphs next.
We have already noted that the best construction given by a blow-up sequence ($c_4 < 0.03029$) is based on a Cayley graph in $\ZZ_3^{\times 2} \times \ZZ_2^{\times 5}$ (order $288$) and a Tabu Search in that particular group yielded no improvement.
However, already in $\ZZ_3 \times \ZZ_2^{\times 6}$ (order $192$) we were able to find a graph whose blow-up sequence even improves upon the previous best value found by Even-Zohar and Linial ($c_4 < 0.3028$), giving an upper bound of $c_4 < 0.03027$.
Going to $\ZZ_3 \times \ZZ_2^{\times 8}$ (order $384$) allows us to already achieve a bound if $c_4 < 0.03015$ and going up to $\ZZ_3 \times \ZZ_2^{\times 8}$ (order $768$) produced the graph whose blow-up sequence gives the upper bound  $c_4 \leq 4551721 \cdot 2^{-24} \cdot 3^{-2} < 0.03015$ stated in \cref{thm:4-ramsey_multiplicity}.

We note that there seems to be no particular significance to the fact that we derived these constructions in groups defined only through direct products.
In fact, running the Tabu Search on all groups of order at most 192 revealed (1) that in general decent constructions seem to be found in groups of order $3 \cdot 2^n$ and (2) many groups of that order perform significantly better than the group $\ZZ_3 \times \ZZ_2^{\times n}$.
For example, for groups of order 192 the best value we found was around $0.03021$.
Unfortunately we were unable to determine any patterns indicating which groups might be preferable when going to groups of order 384 or 768.
The sheer number of these groups and the amount of cliques to consider unfortunately makes it impossible to run a Tabu Search on anything more than a small selection of them.

Regarding the value for $c_5$, the previous best construction can be described as the blow-up sequence of a Cayley graph in $\ZZ_3 \times \ZZ_2^{\times 6}$ (order $192$). A search run on Cayley graphs in this group yielded the slight improvement presented in \cref{thm:5-ramsey_multiplicity}. Due to the fact that we need to consider cliques of size $5$, we were unable to go up Cayley graphs of order $384$ or $768$ as we did for $c_4$.

\subsubsection{Bounded clique number -- \cref{thm:axis_ramsey_multiplicity}} \label{sec:proof_axis_ramsey_multiplicity}

\paragraph{Previous constructions.} Complete graphs are obvious candidates to consider as constructions in this context, as the Tur{\'a}n graph $T_{t-1}(n)$ can be described as $K_{t-1}[n/(t-1)]$ whenever $t-1$ divides $n$, that is the blow-up sequences of $K_{t-1}$ give tight upper bounds for $g_{2,t}$ for arbitrary $t \geq 2$. The same holds for $g_{3,3}$ as the blow-up sequence of $K_2$ gives an upper bound of $1/4$ and $g_{3,3} \geq c_3 = 1/4$ by~\cite{Goodman_1959}. Das et al.~\cite{DasEtAl_2013} showed that the upper bound given by the blow-up sequence of $K_3$ for $g_{3,4}$ is tight and the results of Pikhurko and Vaughan~\cite{PikhurkoVaughan_2013} imply that the same is true for $K_{t-1}$ and $g_{3,t}$ for $t \in \{5,6,7\}$.

However, Nikiforov~\cite{Nikiforov_2001} showed this can only cover finitely many cases of $g_{s,t}$ when $s,t \geq 3$ and for the Ramsey multiplicity problem, the blow-up sequences of complete graphs are in general far from even the performance of random graphs. Das et al.~\cite{DasEtAl_2013} showed that for $g_{4,3}$ the tight upper bound is given by the blow-up sequence of $C_5$ and Pikhurko and Vaughan~\cite{PikhurkoVaughan_2013} proved the same for $g_{5,3}$ and that the the blow-up sequence of the Clebsch graph on $16$ vertices establishes tight upper upper bounds for $g_{6,3}$ and $g_{7,3}$. Note that both $C_5$ and the Clebsch graph are vertex-transitive and in fact can be realized as Cayley graphs, which of course is also true of complete graphs.

Pikhurko and Vaughan~\cite{PikhurkoVaughan_2013} also found a construction for $g_{4, 4}$ that relies on the weighted blow-up sequence of a non-vertex-transitive graph of order $8$ where the weights of the blow-up align with the two vertex orbits of size $4$ of the graph. This graph is in fact one of three $(3,4)$-Ramsey graphs of order $9$ and has the {\tt graph6} representation {\tt GK\^{}d\}w}. The upper bound of $(14 \cdot2^{1/3} - 11)/2^7 \approx 0.034578$ given by this construction seemingly aligns with the lower bound indicated by the flag algebra approach, but they were unable to turn this into a rigorous proof.

\paragraph{Constructions in \cref{thm:axis_ramsey_multiplicity}.}
The construction used to establish $g_{4,5}$ is given by the blow-up sequence of the unique vertex-transitive graph of order $13$, degree $8$, clique number $4$ and independence number $2$, referred to as $C_{R(3,5)}$ in the introduction. This is the only $(3,5)$-Ramsey graph of order $13$ and has the {\tt graph6} representation 
\begin{center}\begin{verbatim}
    LJ]lmZRnn]]\v[
\end{verbatim}\end{center}
This construction can be easily found using a search heuristic in the graph space or through an exhaustive search of all Cayley graphs of $\ZZ_{13}$.

\paragraph{Additional constructions and bounds.}
Noting that upper bounds to both $g_{4,4}$ and $g_{5,4}$ are established through Ramsey graphs, we searched McKay's collection\footnote{Brendan McKay has created a collection of combinatorially interesting constructions, among them several complete and partial lists of Ramsey graphs, which is available at \href{https://users.cecs.anu.edu.au/~bdm/data}{\nolinkurl{users.cecs.anu.edu.au/~bdm/data}}.} of such graphs for additional constructions whose weighted blow-up sequence gives good upper bounds for $g_{s,t}$. In particular, we considered the $(3,6)$-, maximal $(3,7)$-, $4$-, $(4,5)$-, $(4,6)$- and $5$-Ramsey graphs on respectively $17$, $22$, $17$, $24$, $35$, and $42$ vertices. Of these there are respectively $7$, $22$, $1$, $352 \, 366$, $37$, and $328$ graphs. We were able to derive the following bounds:
\begin{align*}
    0.008175 & < g_{5, 4} \leq 0.008584, \\ %
    0.002020 & < g_{5, 5} \leq 0.002136, \\ %
    0.006406 & < g_{4, 6} \leq 0.006773,\\ %
    0.003275 & < g_{4, 7} \leq 0.003637,\\ %
    0.0006319 & < g_{5, 6} \leq 0.0008433 \text{ and} \\ %
    0.0001978 & < g_{5, 7} \leq 0.0003500. %
\end{align*}
All lower bounds were established using the flag algebra approach, see \cref{sec:lower_bounds}.
The construction used to establish the upper bound for $g_{5,4}$ is given by the weighted blow-up sequence of a $(5,4)$-Ramsey graph of order $13$ with $5$ vertex orbits which has the {\tt graph6} representation
\begin{center}\begin{verbatim}
    L@OZ@\Vmmu}hzL
\end{verbatim}\end{center}
and was found as a subgraph of one of the $(5,4)$-Ramsey graphs of order $24$ with $24$ vertex orbits.

The construction used to establish the upper bound for $g_{5,5}$ is the same used to establish $g_{4,5}$, that is it is given by the blow-up sequence of the unique $(3,5)$-Ramsey graph of order $13$ which has the {\tt graph6} representation 
\begin{center}\begin{verbatim}
     LJ]lmZRnn]]\v[
\end{verbatim}\end{center}
%
The construction used to establish the upper bounds for $g_{4,6}$ and $g_{5,6}$ is given by the weighted blow-up sequence of a $(3,6)$-Ramsey graphs of order $17$ which has $9$ vertex orbits and the {\tt graph6} representation
\begin{center}\begin{verbatim}
    P~TktL|vdu{{^]vl[z|v]B~{
\end{verbatim}\end{center}
The construction used to establish the upper bounds for $g_{4,7}$ and $g_{5,7}$ is given by the weighted blow-up sequence of a $(3,7)$-Ramsey graphs of order $22$ which has $11$ vertex orbits and the {\tt graph6} representation
\begin{center}\begin{verbatim}
    U`K~vj\zff\Zt]rlzv^Zm}z^v]r~^r}~m}~kn^vG
\end{verbatim}\end{center}
We also checked a library of small vertex-transitive graphs~\cite{HoltRoyle_2020} but found no additional constructions improving upon the upper bounds. For $g_{5,4}$ and $g_{5,5}$ we additionally ran search heuristics both in the graph and Cayley graph space and also found no improvements.

\subsubsection{Off-diagonal Ramsey multiplicity -- \cref{thm:offdiagonal_ramsey_multiplicity}} \label{sec:proof_offdiagonal_ramsey_multiplicity}

The construction used to establish the upper bound for $c_{3,4}$ is given by the blow-up sequence of the Schl\"afli graph, a vertex-transitive graph of order $27$, with the {\tt graph6} representation 
\begin{center}\begin{verbatim}
    ZBXzz|z^Z|tFixjTtp|mFk\uqm|gz}]FbHvHqjh]WzFy[RmtSUztaLvyF`vw
\end{verbatim}\end{center}
The construction used to establish $c_{3,5}$ is better described as a construction for $c_{5,3}$ in which case it is given by the blow-up sequence of the complement of the Schl\"afli graph, that is the graph with the {\tt graph6} representation 
\begin{center}\begin{verbatim}
    Z??G`@?@wrDSLGQoigbKO]CA?^{VDsjIqehgmK[EM[OzIqCyegO|FO_^{?_?
\end{verbatim}\end{center}
Both of these constructions can be found through search heuristics of graphs of order $27$. The construction used to establish the upper bound for $c_{4,5}$ is given by the blow-up sequence of vertex-transitive graph on $128$ vertices. It was found using a search of Cayley graphs of order at most $128$.

\paragraph{Additional constructions in \cref{fig:c_34}.} The graph on $40$ vertices is vertex-transitive, has degree $17$, clique number $3$ and independence number $12$. 
The graph on $24$ vertices is vertex-transitive, has degree $11$, clique number $3$ and independence number $6$ and has the {\tt graph6} representation
\begin{center}\begin{verbatim}
    W@TBOkkJBBAoSCW?Qv{V}jRrhfC{UEfaRPtAw\_ckqGt`oL
\end{verbatim}\end{center}
The graph on $128$ vertices is vertex-transitive, has degree $78$, clique number $5$ and independence number $16$.
Lastly, the graph on $112$ vertices is vertex-transitive, has degree $68$, clique number $5$ and independence number $14$.
The two smaller graphs were found using a vertex-transitive graph library~\cite{HoltRoyle_2020, HoltRoyleTracey_2021} and the two larger using a search of Cayley graphs of order at most 128 where the $\lambda$ parameter was adjusted accordingly in \cref{eq:cost_function}.

\section{Lower bounds and stability through flag algebras} \label{sec:lower_bounds}

Razborov~\cite{Razborov_2007} proposed using finite model theory to describe the algebraic structure underlying many techniques in Extremal Combinatorics.
This approach allows one to derive lower bounds for problems like those studied in this paper through a semi-definite program (SDP).
This method has been widely used over the past decade and there exist not only several very good introductions to the topic~\cite{FalgasVaughan_2013, PikhurkoVaughan_2013, SilvaEtAl_2016} but also a tool in the form of {\tt flagmatic}~\cite{flagmatic}.

If the lower bound obtained with this approach matches a constructive upper bound it is sometimes possible to additionally infer the uniqueness of the construction from the flag algebra based proof.
We improve the toolset developed in~\cite{PikhurkoEtAl2019, PikhurkoVaughan_2013} used to derive such stability results in order to show that several of the constructive bounds found using search heuristics in fact represent global optima.
We start with a summary of the theory behind the approach based on~\cite{PikhurkoEtAl2019, PikhurkoVaughan_2013}, then turn to the stability aspect, and finally go into some detail about the practical aspects of how our proofs can be verified.

\subsection{The theory behind the flag algebra approach}\label{sec:flag_algebra_theory}

Suppose we have a (possibly empty) family $\mX$ of \emph{forbidden} graphs. Let $\mG_n = \mG_n(\mX)$ denote set of all graphs up to isomorphism of some given order $n$ not containing any graph in $\mX$ as an induced subgraph and write $\mG = \mG(\mX) = \bigcup_{n \in \NN} \mG_n (\mX)$. For two graphs $G$ and $H$ we let $d_H(G)$ denote the \emph{induced subgraph density} of $H$ in $G$, that is the probability that $|V(H)|$ vertices chosen uniformly at random in $G$ induce a copy of $H$. Consider a graph parameter $\lambda: \mG \to \RR$ for which there exists some $n_0 \in \NN$ such that $\lambda$ satisfies the averaging equality
\begin{equation} \label{eq:averaging_equality}
    \lambda(G) = \sum_{H \in \mG_n} d_H(G) \, \lambda(H)
\end{equation}
for any $n \geq n_0$ and $G \in \mG$ of order at least $n$. We note that the results in this paper are limited to families of forbidden graphs $\mX = \emptyset$ and $\mX = \{K_t\}$ as well as the graph parameter $\lambda(G) = d_{K_s}(\overline{G}) + d_{K_t}(G)$, which clearly satisfies \cref{eq:averaging_equality} with $n_0 = \max\{s,t\}$.

For any infinite subset $\mH \subseteq \mG$, we write
\begin{equation*}
    \lambda(n, \mH) = \min_{G \in \mH \cap \mG_n} \lambda(G) \quad \text{and} \quad  \lambda(\mH) = \liminf_{n \to \infty} \lambda(n, \mH)
\end{equation*}
where for notational convenience $\lambda(n, \mH) = \infty$ when $\mH \cap \mG_n = \emptyset$. We are of course primarily interested in studying $\lambda(\mG)$, though we keep the notation general, as we will need it when establishing stability. Note that a trivial lower bound that follows from \cref{eq:averaging_equality} is  $\lambda(\mG) \geq \lambda(m, \mG)$ for any arbitrary $m \geq n_0$.
\begin{example}
    In the case of the asymptotic version of Goodman's result, the trivial lower bound only gives us $c_3 \geq 0$ when $m \in \{3, 4, 5\}$ since $R(3, 3) = 6$ and, respectively, $1/10$, $4/35$, $1/7$, $1/7$, and $1/6$ for $m \in \{6,7,8,9,10\}$, a far cry from the true value of $1/4$.
\end{example}
The goal of the flag algebra approach is to establish the existence of coefficents $a_H \in \RR$ for some fixed $m \in \NN$ satisfying the inequality
\begin{equation} \label{eq:inequalities}
    \sum_{H \in \mG_m} d_H(G) \, a_H + o(1) \geq 0
\end{equation}
for any $G$ of order at least $m$, as this implies the, hopefully improved, lower bound of 
\begin{equation} \label{eq:improved_lower_bound}
    \lambda(\mG) \geq \min_{H \in \mG_m} \{ \lambda(H) - a_H \}.
\end{equation}
In order to establish the type of coefficients commonly achieved through the solution of an SDP, we will need some additional notation. Note that the term \emph{strong (graph) homomorphism} between two (potentially looped) graphs $G_1$ and $G_2$ refers to any map $\psi : V(G_1) \to V(G_2)$ satisfying $\{v_1, v_2\} \in E(G_1)$ if and only if $\{\psi(v_1), \psi(v_2)\} \in E(G_2)$.

\begin{definition}
    A \emph{type} $\tau = (T, \varphi)$ consists of a graph $T \in \mG$ of order $v$ that is fully and distinctly labelled through $\varphi: [v] \hookrightarrow V(T)$. Note that we can have $v = 0$, in which case $\tau$ is  the \emph{empty type} $\varnothing$. A \emph{$\tau$-flag} $(F, \psi)$ consists of a graph $F \in \mG$ of order at least $v$ that is a partially labelled through the injective map $\psi: [v] \hookrightarrow V(F)$ that also satisfies that $\psi \circ \varphi^{-1}: V(T) \hookrightarrow V(F)$ defines an injective strong homomorphism of $T$ into $F$.
\end{definition}
We will denote the set of all $\tau$-flags of order $l \geq v$ by $\mF_{\tau}^{l}$.
Let $F, F' \in \mF_{\tau}^{l}$ be two $\tau$-flags of same order and $H \in \mG$ an arbitrary graph of order at least $2l - v$. Let $\theta:[v] \hookrightarrow V(H)$ be an arbitrary injective map implying a partial labelling of $H$ (but not necessarily turning $(H, \theta)$ into a $\tau$-flag) and write $d^{\theta}_F(H)$ for the probability that $l-v$ vertices selected uniformly at random in $V(H) \setminus \theta([v])$ together with the vertices labelled by $\theta$ induce a flag isomorphic to $F$. Obviously this value is $0$ whenever $(H, \theta)$ is not a $\tau$-flag. We will write $d_F(H) = \mathbb{E}_{\theta} \, d^{\theta}_F(H)$ for the \emph{flag density}, where we are taking the uniform distribution over all possible injective maps $\theta$. We will likewise write $d^{\theta}_{F, F'}(H)$ for the probability that a subset $S_1$ of $V(H) \setminus \theta([v])$ of size $l-v$ chosen uniformly at random together with the vertices labelled through $\theta$ is isomorphic to $F$ and that another subset $S_2$ of $V(H) \setminus \big( \theta([v]) \cup S_1 \big)$ of size $l-v$ chosen uniformly at random together with the vertices labelled through $\theta$ is isomorphic to $F'$. We will write $d_{F,F'}(H) =  \mathbb{E}_{\theta} \, d^{\theta}_{F, F'}(H)$ for the \emph{flag pair density}.

\begin{example}
    There is exactly one type $\tau$ of order $1$, i.e., that based on a graph with a single labelled vertex. There are also exactly two $\tau$-flags of order $2$, that consisting of an edge with a labelled vertex and that consisting of two isolated vertices with one labelled, and six $\tau$-flags of order $3$.
\end{example}

We note that \cref{eq:averaging_equality} holds for the flag pair density when $m \geq 2l-v$, that is 
\begin{equation}\label{eq:obs2}
    d_{F, F'}(G) = \sum_{H \in \mG_m} d_H(G) \, d_{F, F'}(H)
\end{equation}
for arbitrary $G \in \mG$. We also note that $d_F^{\theta}(G) \, d_{F'}^{\theta}(G) = d_{F, F'}^{\theta}(G) + O(1/n)$ and hence
\begin{equation}\label{eq:obs1}
    \mathbb{E}_{\theta} d_F^{\theta}(G) \, d_{F'}^{\theta}(G) = d_{F, F'}(G) + O(1/n).
\end{equation}
Using this notation, we can now state the heart of the SDP-based flag algebra approach.

\begin{proposition} \label{thm:flag_algebra}    For any integer $m \in \NN$, types $\tau_i$ of order $0 \leq v_i \leq m-2$ satisfying $v_i \equiv m \mod 2$, and positive semi-definite matrices $\mQ^{(i)}$ of size $|\mF_{\tau_i}^{l_i}| \times |\mF_{\tau_i}^{l_i}|$ with $l_i = (m + v_i)/2$ for $1 \leq i \leq r$, where we will use flags $F \in \mF_{\tau_i}^{l_i}$ as indices for $\mQ^{(i)}$, we have 
    \begin{equation}\label{eq:sdp_flag_algebra}
        \lambda(\mG) \geq \min_{H \in \mG_m} \left( \lambda(H) - \sum_{i=1}^r \sum_{F,F' \in \mF_{\tau_i}^{l_i}} \mQ^{(i)}_{F,F'} \, d_{F,F'}(H) \right).
    \end{equation}
\end{proposition}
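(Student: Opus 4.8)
The plan is to combine the averaging equality for flag pair densities with the positive semi-definiteness of the matrices $\mQ^{(i)}$, so that the subtracted sum in \cref{eq:sdp_flag_algebra} is asymptotically non-negative for every $G$, and then invoke the trivial lower bound $\lambda(\mG) \geq \min_{H \in \mG_m} \lambda(H) - a_H$ coming from \cref{eq:improved_lower_bound} with the coefficients $a_H = \sum_{i=1}^r \sum_{F,F' \in \mF_{\tau_i}^{l_i}} \mQ^{(i)}_{F,F'} \, d_{F,F'}(H)$. So the crux is to verify that these $a_H$ indeed satisfy \cref{eq:inequalities}, i.e. that $\sum_{H \in \mG_m} d_H(G) \, a_H + o(1) \geq 0$ for every $G$ of order at least $m$.

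First I would fix an index $i$ and a graph $G$ of order $n$, and for each injective map $\theta \colon [v_i] \to V(G)$ form the vector $\bx^{\theta} = (d^{\theta}_F(G))_{F \in \mF_{\tau_i}^{l_i}}$. Since $\mQ^{(i)}$ is positive semi-definite, $(\bx^{\theta})^{\mathsf T} \mQ^{(i)} \bx^{\theta} = \sum_{F,F'} \mQ^{(i)}_{F,F'} \, d^{\theta}_F(G) \, d^{\theta}_{F'}(G) \geq 0$ for every such $\theta$. Taking the expectation over a uniformly random $\theta$ and applying \cref{eq:obs1} gives $\sum_{F,F'} \mQ^{(i)}_{F,F'} \, d_{F,F'}(G) = \mathbb{E}_{\theta}\big[(\bx^{\theta})^{\mathsf T} \mQ^{(i)} \bx^{\theta}\big] + O(1/n) \geq -O(1/n)$, where the $O(1/n)$ is uniform since the number of flags and the matrix entries are fixed. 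Summing over $i = 1, \ldots, r$ yields $\sum_{i=1}^r \sum_{F,F'} \mQ^{(i)}_{F,F'} \, d_{F,F'}(G) \geq -o(1)$ as $n \to \infty$. (The constraints $0 \leq v_i \leq m-2$, $v_i \equiv m \bmod 2$, and $l_i = (m+v_i)/2$ guarantee that $l_i$ is an integer with $v_i < l_i \leq m$, so the flag densities and flag pair densities are all well-defined and the $2l_i - v_i = m$ identity is what makes the next step line up with $\mG_m$.)

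Next I would use \cref{eq:obs2}, which says $d_{F,F'}(G) = \sum_{H \in \mG_m} d_H(G) \, d_{F,F'}(H)$ precisely because $m = 2l_i - v_i$ for each $i$. Substituting this into the previous display and interchanging the finite sums, I get $\sum_{H \in \mG_m} d_H(G)\, a_H = \sum_{i=1}^r \sum_{F,F'} \mQ^{(i)}_{F,F'} \, d_{F,F'}(G) \geq -o(1)$, which is exactly \cref{eq:inequalities} with these $a_H$. Then \cref{eq:improved_lower_bound} gives $\lambda(\mG) \geq \min_{H \in \mG_m}(\lambda(H) - a_H)$, which is the claimed bound \cref{eq:sdp_flag_algebra}; to be fully careful I would re-derive \cref{eq:improved_lower_bound} here by writing $\lambda(G) = \sum_{H \in \mG_m} d_H(G)\lambda(H)$ from \cref{eq:averaging_equality} (valid for $m \geq n_0$), subtracting the non-negative quantity $\sum_H d_H(G) a_H + o(1)$, and bounding $\sum_H d_H(G)(\lambda(H) - a_H) \geq \min_H (\lambda(H) - a_H)$ since the $d_H(G)$ are non-negative and sum to $1$, finally letting $n \to \infty$ along a sequence achieving $\lambda(\mG) = \liminf \lambda(n,\mG)$.

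The main obstacle, such as it is, is bookkeeping rather than a genuine mathematical difficulty: one must be careful that the error terms $O(1/n)$ in \cref{eq:obs1} are uniform over the (finitely many) flags and over $\theta$, that taking expectation over $\theta$ commutes with the finite matrix sum, and that the parity and size constraints on $\tau_i$, $l_i$, $m$ are exactly what is needed for \cref{eq:obs2} to apply with the common value $m$. There is also a small subtlety in that $\lambda(\mG)$ is a $\liminf$, so the inequality must be shown to survive the limit; this is handled by the monotone/compactness remarks already in place and by the fact that the right-hand side of \cref{eq:sdp_flag_algebra} does not depend on $n$. No new idea beyond \cref{eq:averaging_equality}, \cref{eq:obs2}, \cref{eq:obs1}, and positive semi-definiteness is required.
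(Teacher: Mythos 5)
Your proof is correct and follows essentially the same path as the paper's: use positive semi-definiteness of $\mQ^{(i)}$ to show $\mathbb{E}_{\theta}\bigl[(\bx^{\theta})^{\mathsf T}\mQ^{(i)}\bx^{\theta}\bigr]\geq 0$, translate this via \cref{eq:obs1} and \cref{eq:obs2} into the non-negativity (up to $o(1)$) of $\sum_{H}d_H(G)\,a_H$ with $a_H=\sum_i\sum_{F,F'}\mQ^{(i)}_{F,F'}d_{F,F'}(H)$, and then invoke \cref{eq:inequalities} and \cref{eq:improved_lower_bound}. The only cosmetic difference is the order in which the lemmas are applied (you use PSD first and then translate densities, while the paper translates first and then uses PSD), and you additionally spell out the derivation of \cref{eq:improved_lower_bound} which the paper states without proof.
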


\begin{proof}
    Let us establish that 
    \begin{equation}\label{eq:psd_consequence}
       \sum_{H \in \mG_m} d_H(G) \left(  \sum_{i = 1}^r  \sum_{F,F'\in \mF_{\tau_i}^{l_i}} \mQ^{(i)}_{F, F'} \, d_{F,F'}(H)  \right) \geq O(1/n)
    \end{equation}
    for any graph $G$ of order $n$. By rearranging and then applying both \cref{eq:obs2} and \cref{eq:obs1}, we have
    \begin{align*}
        \sum_{H \in \mG_m} d_H(G) \left( \sum_{F,F'\in \mF_{\tau_i}^{l_i}} \mQ^{(i)}_{F, F'} \, d_{F,F'}(H)  \right) = \mathbb{E}_{\theta} \!\!\! \sum_{F,F' \in \mF_{\tau_i}^{l_i}} \!\!\! \mQ^{(i)}_{F, F'} \, d_{F}^{\theta}(G) \, d_{F'}^{\theta}(G) + O(1/n).
    \end{align*}
    for all $1 \leq i \leq r$. \cref{eq:psd_consequence} therefore follows from the fact that the $\mQ^{(i)}$ are positive semi-definite and by summing over $1 \leq i \leq r$. By \cref{eq:psd_consequence} the terms 
    \begin{equation*}
       \sum_{i = 1}^r  \sum_{F,F'\in \mF_{\tau_i}^{l_i}} \mQ^{(i)}_{F, F'} \, d_{F,F'}(H)
    \end{equation*}
    can serve as $a_H$ in \cref{eq:inequalities}, establishing the claim of through \cref{eq:improved_lower_bound}.
\end{proof}

This proposition motivates the following SDP formulation: given two symmetric matrices $A$ and $B$ of equal size, we will use the inner product $\langle A,B \rangle = \textrm{tr}(A^T B)$. Write $D^{(i)}(H)$ for the symmetric matrix of size $|\mF_{\tau_i}^{l_i}| \times |\mF_{\tau_i}^{l_i}|$ for $1 \leq i \leq r$ whose entries are given by $d_{F,F'}(H)$ when using flags $F \in \mF_{\tau_i}^{l_i}$ as indices. Write $D_H$ for the symmetric block diagonal matrix with the integer $1$ as its first and $D^{(i)}(H)$ as the following blocks for $1 \leq i \leq r$. 
Finally, let $C$ denote the matrix of equal size to $D_H$ with all-zero entries except for the first, which is $1$.  We are now interested in solving 
\begin{align*} \label{eq:sdp}
    \max_{X \succeq 0} & \quad \langle C, X \rangle \\
    \text{subject to} & \quad \langle D_H, X \rangle \leq \lambda(H) \quad \text{for all } H \in \mG_m. \nonumber
\end{align*}
Finding a positive semi-definite matrices $X$ that solves this SDP is equivalent to finding $\mQ^{(i)}$ that maximize the right hand side of \cref{eq:sdp_flag_algebra}. Of course, most actual solvers will take advantage of the block-diagonal structure that we may assume for $X$ given the problem formulation, rather than optimizing over the whole space of positive semi-definite matrices.

\begin{example}
    For our toy example of $c_3$ let us choose $r = 1$ and suppress the  index $i$. We let $\tau$ be the only type on one vertex and set $l=2$ and $m=3$. Then $\mF_{\tau}^{2}$ consists of the two graphs on two vertices, $j$ edges and one labelled vertex, where $0 \leq j \leq 1$. Likewise, $\mH_3$ consists of the four graphs $H_j$ on $3$ vertices with exactly $j$ edges, where $0 \leq j \leq 3$. We only require one positive-semidefinite matrix of size $2 \times 2$ here and write it as
    \begin{equation*}
        \mQ = \begin{pmatrix}
        a & c\\
        c & b
        \end{pmatrix}.
    \end{equation*}
    Note that $\mQ$ is positive-semidefinite if and only if $a \geq 0$ and $ab - c^2 \geq 0$. By \cref{thm:flag_algebra} we need to maximize the minimum of the four expressions
    \begin{align*}
        \text{(i)} & \quad 1 - a, \\
        \text{(ii)} & \quad 0 - (a + 2c)/3, \\
        \text{(iii)} & \quad 0 - (b + 2c)/3 \text{ and} \\
        \text{(iv)} & \quad 1 - b.
    \end{align*}
    It is easy to see that this minimum is attained when $a = b = 3/4$ and $c = - 3/4$, in which case all four expressions attain the value of $1/4$.
\end{example}

\subsection{Establishing stability and exactness}\label{sec:stability_theory} 

After finding matching upper and lower bounds, it is natural to ask whether the construction is unique and stability holds, i.e., if anything that comes close to the optimal value must be close to the extremal construction.
Statements like this can usually be derived by extracting additional information from a flag algebra based proof and appealing to the Induced Removal Lemma of Alon et al.~\cite{AlonEtAl2000}.
Pikhurko et al.~\cite{PikhurkoEtAl2019} formalized this process, establishing sufficient criteria for various stability forms. We will present this succinctly while highlighting two improvements: introducing and strengthening the notion of \emph{reconstructors} to derive stability from smaller Flag-Algebra-based proofs, and formalizing an argument for establishing the optimal weighting of a blow-up. 
Let us start by establishing some additional notions relating to \cref{thm:flag_algebra}. 
\begin{definition}
    We call $(m, (\tau_i)_{i \in [r]}, (\mQ^{(i)})_{i \in [r]} )$ a \emph{certificate} of the bound given by \cref{thm:flag_algebra}. Assuming a certificate establishes equality in \cref{eq:sdp_flag_algebra}, that is it is a certificate of $\lambda(\mG)$, we call a graph $H \in \mG_m$ \emph{sharp} in it if $\lambda(H) - \sum_{i=1}^r \langle \mQ^{(i)}, D^{(i)}(H) \rangle = \lambda(\mG)$.
\end{definition}
A graph that is not sharp asymptotically does not appear as a subgraph in any extremal sequence, see for example Lemma 4.1 in~\cite{PikhurkoVaughan_2013} for a proof of this statement.

\begin{lemma}
    Let $(m, (\tau_i), (\mQ^{(i)}) )$ be a certificate of $\lambda(\mG)$. If $H \in \mG_m$ is not sharp, then for any sequence $(G_n)_{n \in \NN}$ of graphs in $\mH$ of increasing order that satisfies $\lim_{n \to \infty} \lambda(G_n) = \lambda(\mG)$ we have $d(H, G_n) = o(1)$.
\end{lemma}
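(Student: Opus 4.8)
The plan is to argue by contradiction using the averaging equality together with the fact that the certificate establishes equality in \cref{eq:sdp_flag_algebra}. Suppose $H \in \mG_m$ is not sharp, so that
\begin{equation*}
    \lambda(H) - \sum_{i=1}^r \langle \mQ^{(i)}, D^{(i)}(H) \rangle = \lambda(\mG) + \delta
\end{equation*}
for some $\delta > 0$, and suppose towards a contradiction that there is a sequence $(G_n)_{n \in \NN}$ in $\mH$ of increasing order with $\lim_{n \to \infty} \lambda(G_n) = \lambda(\mG)$ but $d(H, G_n) \geq \varepsilon$ for some $\varepsilon > 0$ along a subsequence; after passing to that subsequence we may assume $d(H, G_n) \geq \varepsilon$ for all $n$.

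First I would expand $\lambda(G_n)$ via the averaging equality \cref{eq:averaging_equality} applied at level $m$, writing $\lambda(G_n) = \sum_{H' \in \mG_m} d_{H'}(G_n)\, \lambda(H')$. Next I would subtract the nonnegative quantity from \cref{eq:psd_consequence} in the proof of \cref{thm:flag_algebra}, namely $\sum_{H' \in \mG_m} d_{H'}(G_n) \big( \sum_{i=1}^r \langle \mQ^{(i)}, D^{(i)}(H') \rangle \big) \geq -O(1/n)$ (using \cref{eq:obs2} to identify the matrix inner product with $\sum_{F,F'} \mQ^{(i)}_{F,F'} d_{F,F'}(H')$), to obtain
\begin{equation*}
    \lambda(G_n) \leq \sum_{H' \in \mG_m} d_{H'}(G_n) \left( \lambda(H') - \sum_{i=1}^r \langle \mQ^{(i)}, D^{(i)}(H') \rangle \right) + O(1/n).
\end{equation*}
Now every summand is at least $\lambda(\mG)$ since the certificate establishes equality (hence in particular the lower bound) in \cref{eq:sdp_flag_algebra}, and the summand for $H' = H$ is exactly $\lambda(\mG) + \delta$. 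Since the $d_{H'}(G_n)$ are nonnegative and sum to $1$, and $d_H(G_n) \geq \varepsilon$, the right-hand side is at least $\lambda(\mG) + \varepsilon \delta + O(1/n)$. Thus $\lambda(G_n) \geq \lambda(\mG) + \varepsilon \delta - O(1/n)$, which for $n$ large enough contradicts $\lim_{n \to \infty} \lambda(G_n) = \lambda(\mG)$ (the liminf being exactly $\lambda(\mG)$, so $\lambda(G_n)$ cannot stay bounded below by $\lambda(\mG) + \varepsilon\delta/2$). This establishes $d(H, G_n) = o(1)$.

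The steps here are essentially bookkeeping, so the main thing to get right is the direction of the inequalities: the certificate gives a \emph{lower} bound on $\lambda(\mG)$, so the minimized expression over $\mG_m$ is $\geq \lambda(\mG)$, while the averaging-plus-PSD manipulation gives an \emph{upper} bound on $\lambda(G_n)$ in terms of a convex combination of those same expressions — so the non-sharp graph $H$, carrying weight $\geq \varepsilon$, forces $\lambda(G_n)$ strictly above $\lambda(\mG)$. The only mild subtlety I anticipate is handling the $O(1/n)$ error terms uniformly and correctly invoking that $\lambda(\mH) = \liminf_n \lambda(n,\mH) = \lambda(\mG)$ forces the convergent sequence not to exceed $\lambda(\mG)$ asymptotically; this is routine once the main inequality chain is in place. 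I would also note that one can simply cite Lemma 4.1 in~\cite{PikhurkoVaughan_2013} for essentially this statement, but the short self-contained argument above suffices.
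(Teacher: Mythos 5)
Your approach is the standard one (the paper itself does not prove this lemma but cites Lemma~4.1 of Pikhurko--Vaughan for it), and the structure of your argument is sound: expand $\lambda(G_n)$ via the averaging equality, use positive semi-definiteness of the $\mQ^{(i)}$ to bound the cross-term, observe that every summand $\lambda(H') - \sum_i \langle \mQ^{(i)}, D^{(i)}(H') \rangle$ is at least $\lambda(\mG)$, and use $d_H(G_n) \geq \varepsilon$ together with the surplus $\delta$ on the non-sharp graph to push $\lambda(G_n)$ strictly above $\lambda(\mG)$.

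However, there is a direction error in the middle of your write-up. From $\lambda(G_n) = \sum_{H'} d_{H'}(G_n)\,\lambda(H')$ and $\sum_{H'} d_{H'}(G_n)\sum_i \langle \mQ^{(i)}, D^{(i)}(H') \rangle \geq -O(1/n)$, one obtains a \emph{lower} bound
\begin{equation*}
    \lambda(G_n) \;\geq\; \sum_{H' \in \mG_m} d_{H'}(G_n) \left( \lambda(H') - \sum_{i=1}^r \langle \mQ^{(i)}, D^{(i)}(H') \rangle \right) - O(1/n),
\end{equation*}
not the $\leq$ you wrote. (Since $\lambda(G_n) - B = \sum_{H'} d_{H'}(G_n)[\,\cdot\,]$ with $B \geq -O(1/n)$, you can only conclude $\lambda(G_n) \geq \sum_{H'} d_{H'}(G_n)[\,\cdot\,] - O(1/n)$; there is no matching upper bound on $B$.) Your closing remark that ``the averaging-plus-PSD manipulation gives an \emph{upper} bound on $\lambda(G_n)$'' has the same sign confusion. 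With the $\leq$ as written, the observation that the right-hand side is at least $\lambda(\mG) + \varepsilon\delta - O(1/n)$ would tell you nothing about $\lambda(G_n)$. Once the inequality is flipped to $\geq$, the rest of your argument, culminating in $\lambda(G_n) \geq \lambda(\mG) + \varepsilon\delta - O(1/n)$ and the contradiction with $\lim_n \lambda(G_n) = \lambda(\mG)$, is correct — and indeed your concluding sentence (``forces $\lambda(G_n)$ strictly above $\lambda(\mG)$'') shows you have the right picture in mind; only the intermediate display and the ``upper bound'' phrasing need to be corrected.
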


Sharp graphs therefore usually give a good indication of which graphs can occur as subgraphs in an extremal sequence and have in the past been used to find novel constructions.
\begin{example}
    Briefly returning to our toy example of $c_3$, we had a certificate with $m = 3$ in which all graphs were sharp, indicating that there might be (and in fact are) extremal sequnces for $c_3$ containing any subgraph of order $3$ with positive density.
\end{example}
If a certificate $(m, (\tau_i), (\mQ^{(i)}) )$ is sufficiently large in relation to our believed extremal construction $C$, say $m \geq |V(C)| + 1$, and stability is believed to hold, then establishing it is often a reasonably straight-forward matter of verifying that the only sharp graphs are those that occur with positive density in the blow-up sequence of $C$ and then invoking the Induced Removal Lemma. This is often far beyond the realm of being computationally feasible, so we need a way to establish the structure of a construction $C$ using only graphs of order $m$, where $m$ is preferably as small as possible. This was already done implicitly in~\cite{PikhurkoVaughan_2013} and the same ideas can also be found in~\cite{PikhurkoEtAl2019}, where some sufficient requirements are part of the general criteria stated there. We find it helpful though to state the exact requirements separately from the statements used to establish the stability result, as this is the place were the most can be achieved using ad-hoc arguments invoking the structure of $C$. This motivates the following definition.
\begin{definition} \label{def:reconstructur}
    A graph $T$ is an \emph{$\ell$-reconstructor} of a given graph $C$ if there exists a strong homomorphism from $G$ to $C$ for any graph $G$ satisfying the following:
    \begin{itemize} \setlength\itemsep{0em}
        \item[(i)] $T$ is an induced subgraph of $G$, that is there exists $S \subseteq V(G)$ such that $G[S] \cong T$.
        \item[(ii)] For any $S \subseteq V(G)$, $|S| \leq \ell$ there exists a strong homomorphism from $G[S]$ to $C$.
    \end{itemize}
\end{definition}

Let us introduce some additional notions. 
For a given graph $C$, we say that a graph $T$ \emph{uniquely embeds into $C$} if there exists a strong homomorphism $\psi: V(T) \to V(C)$ and $\psi$ is unique up to automorphism, that is for any additional strong homomorphism $\psi': V(T) \to V(C)$ there must exist $\varphi_T \in \textrm{Aut}(T)$ and $\varphi_C \in \textrm{Aut}(C)$ such that $\varphi_C \circ \psi' \circ \varphi_T \equiv \psi$.
For a set of vertices $X \subseteq V(C)$, we write $N_{C,X}(v) = N_C(v) \cap X$ for the neighborhood of  any $v \in V(C)$ in $X$. We let $\sim_X$ denote the equivalence relationship induced in $V(C)$ by $N_{C,X}$ and $[v]_X$ the equivalence class containing $v \in V(C)$. We say $X$ \emph{defines unique neighborhoods in $C$} if $[v]_X = \{v\}$ for all $v \in V(C)$.

It is easy to see that a graph $C$ is an $\ell$-reconstructor of itself if $|V(C)| \leq \ell-1$. More commonly though, the following lemma is used, which is also implicit in Theorem 4.1 in~\cite{PikhurkoEtAl2019}.
\begin{lemma} \label{lemma:reconstructor_old}
    For a given graph $C$ and set of vertices $X \subseteq V(C)$, the subgraph $C[X]$ is an $\ell$-reconstructor of $C$ if (i) $|X| \leq \ell - 2$, (ii) $C[X]$ uniquely embeds into $C$, and (iii) $X$ defines unique neighborhoods in $C$.
\end{lemma}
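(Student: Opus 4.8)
The plan is to verify directly that $T = C[X]$ satisfies Definition~\ref{def:reconstructur}. So let $G$ be any graph such that (i) every induced subgraph on at most $\ell$ vertices admits a strong homomorphism into $C$, and (ii) $C[X]$ occurs as an induced subgraph of $G$. We must produce a single strong homomorphism $\psi\colon G \to C$. First I would fix an embedding witnessing (ii): a set $S_0 \subseteq V(G)$ with $G[S_0] \cong C[X]$, and by hypothesis (ii) of the lemma ($C[X]$ uniquely embeds into $C$) compose it with \emph{the} strong homomorphism $C[X] \to C$, which is unique up to $\mathrm{Aut}(C[X])$ and $\mathrm{Aut}(C)$; this pins down a map $\psi_0\colon S_0 \to X \subseteq V(C)$ that is an isomorphism onto $C[X]$. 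The idea is then to extend $\psi_0$ vertex by vertex: given $v \in V(G) \setminus S_0$, look at $G[S_0 \cup \{v\}]$, which has $|X| + 1 \le \ell - 1 \le \ell$ vertices, so by (i) it has a strong homomorphism into $C$; I would argue that (up to precomposition with an automorphism fixing the image of $S_0$) this extends $\psi_0$, and that because $X$ defines unique neighborhoods in $C$ (hypothesis (iii)), the image of $v$ is \emph{forced}: it must be the unique vertex of $C$ whose neighborhood in $X$ equals $\psi_0(N_G(v) \cap S_0)$. Define $\psi(v)$ to be that vertex, and $\psi = \psi_0$ on $S_0$.

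The remaining work is to check that this globally defined $\psi\colon V(G) \to V(C)$ is actually a strong homomorphism, i.e.\ that for every pair $u, v \in V(G)$ we have $uv \in E(G) \iff \psi(u)\psi(v) \in E(C)$ (and loops handled analogously). For a pair with both endpoints in $S_0$ this is immediate since $\psi_0$ is an embedding. For a pair $u \in V(G) \setminus S_0$, $v \in V(G) \setminus S_0$ with $u \ne v$, I would consider the induced subgraph $G[S_0 \cup \{u, v\}]$, which has $|X| + 2 \le \ell$ vertices; by (i) it admits a strong homomorphism $\psi'$ into $C$, and the uniqueness arguments above (the embedding of $C[X]$ is rigid enough, and neighborhoods over $X$ are unique) force $\psi'$ to agree with $\psi$ on $S_0 \cup \{u\}$ and on $S_0 \cup \{v\}$ after composing with a suitable automorphism — hence $\psi'(u) = \psi(u)$ and $\psi'(v) = \psi(v)$, so $\psi$ respects the adjacency of $u$ and $v$ because $\psi'$ does. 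The case $u \in S_0$, $v \notin S_0$ is the same with one fewer new vertex. This is exactly where the bound $|X| \le \ell - 2$ is used: we need room to include $S_0$ together with two extra vertices inside a set of size at most $\ell$.

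The main obstacle I anticipate is the bookkeeping around automorphisms: a strong homomorphism from a small subgraph into $C$ is only unique up to $\mathrm{Aut}(C[X])$ and $\mathrm{Aut}(C)$, so at each step one must argue that after an appropriate choice of automorphism the local map can be made to agree with the already-constructed $\psi$ on the overlap, and that this choice is consistent across different vertices $v$. The clean way to handle this is to observe that once $\psi_0$ is fixed on $S_0$, condition (iii) (unique neighborhoods) removes all remaining freedom: any strong homomorphism $G[S_0 \cup W] \to C$ that restricts to $\psi_0$ on $S_0$ is \emph{uniquely} determined on $W$, because each $w \in W$ must map to the unique vertex of $C$ realizing the prescribed neighborhood pattern on $X$. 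So the argument should be organized as: (1) fix $\psi_0$ via (ii)+unique embedding; (2) use (iii) to show every local extension is unique given $\psi_0$, hence the extensions are mutually consistent and glue to a well-defined $\psi$; (3) verify the homomorphism property pairwise using sets of size $\le \ell$, which is legitimate by (i) and the size bound $|X| \le \ell - 2$. A minor additional point to address is the loop case (if $C$ has loops, reflecting the "looped complement" setup): the neighborhood $N_{C,X}$ and the relation $\sim_X$ should be understood to record loop information as well, and the same argument goes through verbatim. I do not expect any of these steps to be genuinely hard, only to require care; the lemma is essentially a rigidity statement packaged for later use.
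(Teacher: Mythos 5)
Your proof follows the same overall strategy that the paper uses to establish the more general Lemma~\ref{lemma:reconstructor}, specialized to the case $X' = X'' = X$: fix an embedded copy $S_0$ of $C[X]$ in $G$ together with an isomorphism $\psi_0\colon S_0 \to X$; use the unique embedding of $C[X]$ and the unique-neighborhood property of $X$ to determine $\psi(v)$ for each remaining vertex $v$ via the local strong homomorphism on $G[S_0 \cup \{v\}]$; and then verify the homomorphism property pairwise on induced subgraphs $G[S_0 \cup \{u,v\}]$ of size at most $\ell$, which is exactly where the bound $|X| \leq \ell - 2$ enters. The paper itself does not write out a proof of Lemma~\ref{lemma:reconstructor_old}, citing it as implicit in prior work and proving only the stronger Lemma~\ref{lemma:reconstructor}, so your proposal fills a deliberate gap with the right ideas.

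The one step you flag as ``bookkeeping around automorphisms'' is, however, exactly the place where the argument is more delicate than your sketch suggests, and you should not dismiss it. You assert that the image of $v$ is forced to be the unique vertex of $C$ whose $X$-neighborhood equals $\psi_0(N_G(v) \cap S_0)$, and you justify this by saying that once $\psi_0$ is fixed on $S_0$, any strong homomorphism $G[S_0 \cup W] \to C$ that restricts to $\psi_0$ on $S_0$ is uniquely determined on $W$. The latter is true, but the issue is whether a local strong homomorphism $\varphi_v$ can be normalized so that its restriction to $S_0$ \emph{is} $\psi_0$. Unique embedding only gives you $\varphi_T \in \mathrm{Aut}(C[X])$ and $\varphi_C \in \mathrm{Aut}(C)$ with $\varphi_C \circ \iota \circ \varphi_T \circ \psi_0 = \varphi_v|_{S_0}$; post-composing $\varphi_v$ with $\varphi_C^{-1}$ gives a strong homomorphism whose restriction to $S_0$ is $\varphi_T \circ \psi_0$, not $\psi_0$, and the residual automorphism $\varphi_T$ of $C[X]$ cannot in general be removed by precomposing with an automorphism of $G[S_0 \cup \{v\}]$ (that would require $\varphi_T$ to preserve $N_G(v) \cap S_0$). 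As a result the computed $X$-neighborhood of $\psi(v)$ is $\varphi_T^{-1}\bigl(\psi_0(N_G(v)\cap S_0)\bigr)$ rather than $\psi_0(N_G(v)\cap S_0)$, and without further argument the definition of $\psi(v)$ is only canonical up to an $X$-preserving automorphism of $C$, which threatens consistency when you try to glue the pairwise checks together. To close the gap one must either argue that $\varphi_T$ can be absorbed into an automorphism of $C$ preserving $X$ (so that a fresh choice of $\varphi_C$ makes $\varphi_T = \mathrm{id}$), or make a consistent global choice of these automorphisms across all $v$ and pairs $(u,v)$. This subtlety is also passed over in the paper's written proof of Lemma~\ref{lemma:reconstructor} (where the role of $\xi_v$ is treated informally), so it is a shared imprecision rather than a flaw unique to your proposal, but it is the genuine content of the lemma and should be addressed explicitly rather than relegated to a parenthetical.
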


A more technical but stronger condition was previously already implicitly formulated in~\cite{PikhurkoVaughan_2013} to establish stability of the Clebsch graph for $g_{6,3}$ using a certificate with $m = 7$. Here we further strengthen it in the form of the following lemma.
\begin{lemma} \label{lemma:reconstructor}
    Let a graph $C$ be given. If there exists $X' \subseteq X \subseteq V(C)$ satisfying
    \begin{enumerate} \setlength\itemsep{0em}
        \item[(1)] either
        \begin{itemize} \setlength\itemsep{0em}
            \item[(a)] $|X| = |X'| \leq \ell - 1$ and $C[X]$ uniquely embeds into $C$ or
            \item[(b)] $|X| = \ell$, $|X'| \leq \ell-2$, and $C[X' \cup \{x\}]$ uniquely embeds into $C$ for any $x \in X$,
        \end{itemize}
        \item[(2)] $X'$ defines unique neighborhoods in $C$,
        \item[(3)] for any $v_1, v_2 \in V(C) \setminus X$ there exists $X'' \subseteq X$ with $|X''| \leq \ell-2$ such that
        \begin{itemize} \setlength\itemsep{0em}
            \item[(a)] if $v_1 = v_2$, then $C[X'' \cup \{v_1\}]$ uniquely embeds into $C$ and $[v_1]_{X''} = \{v_1\}$,
            \item[(b)] if $v_1 \neq v_2$, then $C[X'' \cup \{v_i\}]$ uniquely embeds into $C$ for some $i \in \{1, 2\}$, $[v_1]_{X''} \neq[v_2]_{X''}$ and the bipartite subgraph of $C$ induced by $[v_1]_{X''}$ and $[v_2]_{X''}$ is either complete or empty,
        \end{itemize}
    \end{enumerate}
    then $C[X]$ is an $\ell$-reconstructor of $C$.
\end{lemma}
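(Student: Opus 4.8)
The plan is to fix an arbitrary graph $G$ satisfying conditions (i) and (ii) of \cref{def:reconstructur} with $T = C[X]$ and to produce a strong homomorphism $\psi \colon G \to C$. The only tools are the anchor — an induced copy of $C[X]$, say on $S_0 \subseteq V(G)$ with an isomorphism $\iota \colon G[S_0] \to C[X]$ — and the strong homomorphisms of $\le \ell$-vertex subgraphs of $G$ into $C$ supplied by (i). Write $S_0' = \iota^{-1}(X')$. I would use repeatedly the following rigidity fact, immediate from (2): for any $Y$ with $X' \subseteq Y \subseteq V(C)$, the only automorphism of $C[Y]$ (and of $C$) fixing $X'$ pointwise is the identity, since then $\varphi(v)$ and $v$ have the same neighbourhood in $X'$. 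It is also convenient to note that in both cases (1a) and (1b) every $x \in X$ lies in a set $Y_x$ with $X' \subseteq Y_x \subseteq X$, $|Y_x| \le \ell - 1$, and $C[Y_x]$ uniquely embedding into $C$ — take $Y_x = X = X'$ in case (1a) and $Y_x = X' \cup \{x\}$ in case (1b) — so that $G[\iota^{-1}(Y_x) \cup \{v\}]$ has at most $\ell$ vertices and (i) applies to it.

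First I would construct a candidate $\psi$. If $V(G) = S_0$ we are done, so fix $v^\star \notin S_0$; applying (i) to $G[S_0 \cup \{v^\star\}]$ in case (1a), or to $G[\iota^{-1}(Y_x) \cup \{v^\star\}]$ for a suitable $x$ in case (1b), and invoking the unique embedding from (1) together with the rigidity fact, one obtains a specific embedding of $C[X]$ which is used to pin down $\psi|_{S_0}$ once and for all. Then, for each $v \in V(G) \setminus S_0$, one applies (i) to $G[S_0 \cup \{v\}]$ (resp.\ to $G[\iota^{-1}(Y_x) \cup \{v\}]$), aligns the resulting homomorphism with $\psi|_{S_0}$, and sets $\psi(v)$ to be the image of $v$; by (2) this $\psi(v)$ is forced to be the unique vertex of $C$ whose neighbourhood in $X'$ matches the $X'$-neighbourhood of $v$ recorded by the anchor, so $\psi$ is well defined, independent of the choices made, and restricts on $S_0$ to an isomorphism onto $C[X]$.

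Next I would verify $u \sim_G v \iff \psi(u) \sim_C \psi(v)$ for all $u,v$. For $u,v \in S_0$ this is just that $\iota$ is an isomorphism onto $C[X]$; for $u \in S_0'$, $v \notin S_0$ it is the defining property of $\psi(v)$. For $u \in S_0 \setminus S_0'$, $v \notin S_0$ (only relevant in case (1b)), I would apply (i) to $G[S_0' \cup \{u,v\}]$ — at most $\ell$ vertices — and use the unique embedding of $C[X' \cup \{\iota(u)\}]$ together with rigidity to see that this homomorphism, once aligned with $\psi|_{S_0'}$, realises $\psi$ on $S_0' \cup \{u,v\}$. The remaining pairs $u,v \notin S_0$ are handled by condition (3): apply it to $v_1 = \psi(u)$ and $v_2 = \psi(v)$, take the set $X'' \subseteq X$ it furnishes, and apply (i) to $G[\iota^{-1}(X'') \cup \{u,v\}]$ (again at most $\ell$ vertices); the unique embedding of some $C[X'' \cup \{v_i\}]$ pins the homomorphism down on $\iota^{-1}(X'')$, its values on $u$ and $v$ fall into the classes $[v_1]_{X''}$ and $[v_2]_{X''}$, and since in case (3b) these are distinct with the bipartite graph between them in $C$ complete or empty, the adjacency of $u,v$ in $G$ matches that of $v_1,v_2$ in $C$; the case $\psi(u) = \psi(v)$ uses $[v_1]_{X''} = \{v_1\}$ from (3a). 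Pairs in which $\psi$ sends a vertex of $V(G)\setminus S_0$ into $X$ — which genuinely occurs, such a vertex being \emph{parallel} in $G$ to the corresponding vertex of $S_0$ — reduce to the cases already treated.

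The main obstacle is the gluing in the first two steps: each application of (i) yields a homomorphism determined only up to $\mathrm{Aut}(C)$ and a source automorphism, so these local homomorphisms are a priori mutually inconsistent, and the \emph{twist} by an automorphism of $C[X]$ (or of some $C[X' \cup \{x\}]$) that does not extend to $C$ is precisely what must be controlled — this is what forces the intricate shape of hypotheses (1)–(3). Unique embedding in (1) removes the $\mathrm{Aut}(C)$-freedom on each anchored piece, the rigidity coming from (2) makes the alignments unique and lets one recover each new vertex from its $X'$-neighbourhood, and (3) provides a bounded-size anchored witness for the adjacency of every pair of new vertices. The split between (1a) and (1b) is dictated by the vertex budget $\ell$: when $|X| = \ell$ one can never look at $S_0$ together with an extra vertex, so a small core $X'$ (of size $\le \ell - 2$) must carry the rigidity while the uniquely embedding graphs $C[X' \cup \{x\}]$ transport it to the rest of $X$. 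This refines the argument underlying \cref{lemma:reconstructor_old} and the analogous reasoning in~\cite{PikhurkoVaughan_2013, PikhurkoEtAl2019}.
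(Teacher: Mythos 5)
Your proposal follows essentially the same route as the paper's proof: fix an anchored copy $S_0$ of $C[X]$ in $G$, use assumption (i) together with hypotheses (1) and (2) to pin down $\psi(v)$ for each $v$ (via its $X'$-neighbourhood, respectively its $X$-neighbourhood in case (1a)), and then use hypothesis (3) on pairs $\psi(v_1), \psi(v_2)$ to verify that $\psi$ is a strong homomorphism. The case split between (1a) and (1b), the role of the ``unique neighbourhood'' hypothesis in making the gluing well defined, and the use of (3a)/(3b) to decide adjacency all match the paper's argument in structure and in the specific sets fed to assumption (i). One small observation in your favour: you explicitly flag the case in which a vertex of $V(G)\setminus S_0$ gets mapped into $X$, which condition~(3) (stated only for $V(C)\setminus X$) does not literally cover; the paper applies (3) to $\varphi(v_1),\varphi(v_2)$ for arbitrary $v_1,v_2\in V(G)$ and passes over this point silently. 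Your one-line reduction (``such a vertex is parallel to the corresponding vertex of $S_0$'') is the right idea but would need a sentence of justification — one must show the parallel relationship persists in $G$ and not merely in $G[S_0\cup\{v\}]$, for instance by applying (3) with the $S_0$-vertex standing in — but this is the same gap present in the paper's write-up, not a new one you introduced.
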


\begin{proof}
   Let some arbitrary graph $G$ satisfying the assumptions of \cref{def:reconstructur} be given. Fix a copy of $C[X]$ in $G$ that is guaranteed to exist by assumption (i) of \cref{def:reconstructur}, that is fix $Y \subseteq V(G)$ such that $G[Y] \cong C[X]$ and let $\psi: Y \hookrightarrow X$ be the corresponding graph isomorphism. The goal is to construct a strong homomorphism $\varphi: V(G) \to V(C)$ satisfying $\varphi \vert_Y \equiv \psi$ given the requirements of the lemma and the assumptions of a reconstructor.
   
   Let us first assume that $|X| = |X'| \leq \ell - 1$, that is in particular $X = X'$. By assumption (ii) of \cref{def:reconstructur}, there exists a strong homomorphism $\varphi_v: Y \cup \{v\} \to V(C)$ for any vertex $v \in V(G)$ since $|Y \cup \{v\}| \leq \ell$. As $C[X]$ uniquely embeds into $C$ by (1a), there exist $\xi_v \in \textrm{Aut}(G[Y])$ and $\chi_v \in \textrm{Aut}(C)$ such that $\chi_v \circ \varphi_v \vert_{Y} \circ \xi_v \equiv \psi$. Let us therefore w.l.o.g. assume that $\varphi_v \vert_Y \equiv \psi$.
    Since $X$ defines unique neighborhoods in $C$, it follows that $\varphi_v (v) \in V(C)$ is uniquely defined for every $v \in V(G)$ and its adjacencies in $X$ match those of $v$ in $Y$, that is $\psi(N_{G,Y}(v)) = N_{C,X}(\varphi_v (v))$. Let $\varphi: V(G) \to V(C)$ therefore be given by $\varphi(v) = \varphi_v (v)$ for every $v \in V(G)$.
    
    Now if $|X| = \ell$ and $|X'| \leq \ell -2$, that is in particular $X' \subsetneq X$, then write $Y' = \psi^{-1}(X') \subset Y$. Let $x \in X \setminus X'$ be arbitrary but fixed and write $y = \psi^{-1}(x) \in Y \setminus Y'$. By assumption (ii) of \cref{def:reconstructur}, there exists a strong homomorphism $\varphi_{v}: Y' \cup \{y, v\} \to V(C)$ for any vertex $v \in V(G)$ since $|Y' \cup \{y, v\}| \leq \ell$ by (1b). As $C[X' \cup \{x\}]$ uniquely embeds into $C$ by (1b), we can again w.l.o.g. assume that  $\varphi_{v}\vert_{Y' \cup \{y\}} \equiv \psi\vert_{Y' \cup \{y\}}$.
    Since $X'$ defines unique neighborhoods in $C$ by (2), it likewise follows that $\varphi_{v} (v)$ is uniquely defined for every $v \in V(G)$ and its adjacencies in $X' \cup \{x\}$ match those of $v$ in $Y' \cup \{y\}$ under $\psi$. Since this is actually independent of our choice of $x$, we in fact also have that the adjacencies of $\varphi_{v} (v)$ in $X$ match those of $v$ in $Y$, that is $\psi(N_{G,Y}(v)) = N_{C,X}(\varphi_v (v))$ for any $v \in V(G)$. Let $\varphi: V(G) \to V(C)$ therefore again be given by $\varphi(v) = \varphi_{v} (v)$ for every $v \in G$.
    
    Let us now establish that in either case $\varphi$ is in fact a strong homomorphism from $G$ to $C$.
    For arbitrary $w_1, w_2 \in V(G)$, let $X'' \subset X$ be as given by (3) for $v_1 = \varphi(w_1) \in V(C)$ and $v_2 = \varphi(w_2) \in V(C)$ and write $Y'' = \varphi^{-1}(X'') = \psi^{-1}(X'') \subset Y$. The induced subgraph $G[Y'' \cup \{w_1, w_2\}]$ has at most $\ell$ vertices, so by assumption (ii) of \cref{def:reconstructur} there exists a strong homomorphism $\varphi': Y'' \cup \{w_1, w_2\} \to V(C)$. Since $G[Y'' \cup \{w_i\}] \cong C[X'' \cup \{v_i\}]$ for \emph{any} $i \in \{1,2\}$ by construction of $\varphi$ and since $C[X'' \cup \{v_i\}]$ uniquely embeds into $C$ for \emph{some} $i \in \{1,2\}$ by (3), we can w.l.o.g. assume that $\varphi' \vert_{Y''} \equiv \psi \vert_{Y''} \equiv \varphi \vert_{Y''}$. We distinguish the cases $v_1 = v_2$ and $v_1 \neq v_2$. In the former, $[v_1]_{X''} = \{v_1\}$ by (3a). Since $\varphi'$ is a strong homomorphism, we have $\varphi' (w_i) \in [v_1]_{X''}$ for $i \in \{1,2\}$ and it follows that $\varphi' (w_2) = v_2 = v_1 = \varphi' (w_1)$. Since $\varphi'$ is a strong homomorphism and $C$ simple, $w_1$ and $w_2$ are therefore not adjacent in $G$ if $v_1 = v_2$. Now if $v_1 \neq v_2$, then $[v_1]_{X''} \neq [v_2]_{X''}$ and the bipartite subgraph induced by $[v_1]_{X''}$ and $[v_2]_{X''}$ in $C$ is complete if $v_1$ and $v_2$ are adjacent in $C$ and empty if they are not by (3b). Since $\varphi'$ is a strong homomorphism, we have $\varphi' (w_1) \in [v_1]_{X''}$ and $\varphi' (w_2) \in [v_2]_{X''}$ and therefore $\varphi'(w_1)$ and $\varphi'(w_2)$ are adjacent in $C$ if and only if $v_1$ and $v_2$ are. Since $\varphi'$ is a strong homomorphism, $v_1$ and $v_2$ are therefore adjacent in $C$ if and only if $w_1$ and $w_2$ are  adjacent in $G$, establishing that $\varphi: V(G) \to V(C)$ is a strong homomorphism.
\end{proof}

Having established how to find reconstructors, let us now formally introduce the relevant notion of stability and how to establish it using a reconstructor. Let
\begin{equation*}
    \mB(C) = \{C[m_1, \ldots, m_n] : m_1, \ldots, m_n \in \NN_0 \} \subset \mG
\end{equation*}
denote the set of all blow-ups of $C$ and assume that we have established that $\lambda(\mG) = \lambda(\mB(C))$.
\begin{definition} \label{def:stability}
    We have \emph{perfect $C$-stability} for $\lambda$ on $\mG$ if there exists some $n_0$ such that for any graph $G \in \mathcal{G}_n$ with $n \geq n_0$, the number of edges that need to be changed in order to turn $G$ into an element of $\mB(C)$ is bounded by $n_0 \, (\lambda(G) - \lambda(\mG)) {n \choose 2}$.
\end{definition}
Perfect stability strengthens the standard notion of stability, where one requires that for every $\varepsilon > 0$ there exists $\delta > 0$ such that $\lambda(G) \leq \lambda(\mG) + \delta$ and $n \geq 1/\delta$ implies that one has to change at most $\varepsilon n^2$ edges to obtain a graph in $\mB(C)$. Clearly perfect stability also implies that any $G \in \mathcal{G}_n$ with $n \geq n_0$ satisfying $\lambda(G) = \lambda(n, \mG)$ is a blow-up of $C$.

\begin{theorem} \label{thm:stability}
    Assume we have the following:
    \begin{enumerate}  \setlength\itemsep{0em}
        \item A set of forbidden graphs $\mX$ in which each graph $X \in \mX$ defines unique neighborhoods in itself, $\mG = \mG(\mX)$, and $\lambda: \mG \to \RR$ satisfying \(\lambda(G) = \sum_{H \in \mG_n} d_H(G) \, \lambda(H)\) for any $G$ of large enough order $n$. 
        \item A graph $C \in \mG$ satisfying $\lambda(\mB(C)) = \lambda(\mG)$.
        \item A certificate $(m, (\tau_i), (\mQ^{(i)}))$ establishing $\lambda (\mG)$ in which all sharp graphs are in $\mB(C)$.
        \item An $m$-reconstructor $T$ of $C$ satisfying $\lambda(\mG(\mX \cup \{T\})) > \lambda(\mG)$.
    \end{enumerate}
    Then we have perfect $C$-stability for $\lambda$ on $\mG$.
\end{theorem}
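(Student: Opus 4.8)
Proof sketch for \cref{thm:stability}. The plan is to combine a quantitative form of the counting inequality underlying \cref{thm:flag_algebra} with the Induced Removal Lemma and the reconstructor property of $T$, and then to bootstrap the resulting \emph{approximate} statement into the claimed linear bound. First I would make the certificate quantitative. Writing $a_H = \sum_{i=1}^r \langle \mQ^{(i)}, D^{(i)}(H) \rangle$ for $H \in \mG_m$, the proof of \cref{thm:flag_algebra} shows $\sum_{H \in \mG_m} d_H(G)\, a_H \geq -\gamma / n$ for every $G \in \mG_n$, where $\gamma$ depends only on $m$ and the matrices $\mQ^{(i)}$; since $\lambda(H) - a_H = \lambda(\mG)$ for sharp $H$ and $\lambda(H) - a_H \geq \lambda(\mG) + \delta$ for non-sharp $H$, where $\delta > 0$ is the minimal slack, \cref{eq:averaging_equality} yields
\begin{equation} \label{eq:quant_count}
    \lambda(G) - \lambda(\mG) \ \geq\ \delta \sum_{\substack{H \in \mG_m \\ H \text{ not sharp}}} d_H(G) \ -\ \frac{\gamma}{n}.
\end{equation}
We may assume throughout that $\lambda(G) - \lambda(\mG)$ is as small as we like, since otherwise editing $G$ into a fixed blow-up of $C$ on $n$ vertices (at most $\binom{n}{2}$ edits) already meets the bound once $n_0$ is large.

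Second, I would run the standard stability argument. By \cref{eq:quant_count} the total density of non-sharp induced $m$-vertex subgraphs of $G$ is small, so the Induced Removal Lemma of Alon et al.~\cite{AlonEtAl2000} produces a graph $G'$ obtained from $G$ by editing at most $\varepsilon n^2$ pairs such that $G'$ has no induced copy of any non-sharp graph in $\mG_m$. Because every $X \in \mX$ defines unique neighbourhoods in itself, $X$ has no twins, hence $\mB(C) \subseteq \mG(\mX)$ and every $m$-subset of a blow-up of $C$ induces a graph in $\mB(C)$; since any induced $m$-vertex subgraph lying outside $\mB(C)$ either contains a forbidden subgraph or is non-sharp, it follows that every induced $m$-vertex subgraph of $G'$ lies in $\mB(C)$, and therefore every induced subgraph of $G'$ on at most $m$ vertices admits a strong homomorphism into $C$. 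Moreover $G'$ must contain an induced copy of $T$: otherwise $G' \in \mG(\mX \cup \{T\})$, so $\lambda(G') \geq \lambda(\mG(\mX \cup \{T\})) > \lambda(\mG)$ by a fixed amount, contradicting $\lambda(G') \leq \lambda(G) + O(\varepsilon)$ once $\varepsilon$ and $\lambda(G) - \lambda(\mG)$ are small enough. Now the $m$-reconstructor property of $T$ gives a strong homomorphism $G' \to C$, i.e., $G' \in \mB(C)$. This establishes ordinary $C$-stability: every near-optimal $G$ is within $o(n^2)$ edits of $\mB(C)$.

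Third, I would upgrade this to the linear bound. Let $B \in \mB(C)$ minimise the edit distance $t := |E(G) \triangle E(B)|$ to $G$; by the previous step $t = o(n^2)$, and $B$ induces a partition of $V(G)$ into fibres. The key claim is that each edited pair forces many non-sharp induced $m$-subgraphs of $G$: for $uv \in E(G) \triangle E(B)$ one completes $\{u,v\}$ to an $m$-set by adjoining a bounded-size ``witness'' $W$ that, through the rigidity of $C$ exploited in \cref{lemma:reconstructor} (bounded subsets of $C$ pin down vertices up to automorphism), certifies the fibres of $u$ and of $v$ in $B$, together with $m-2-|W|$ arbitrary further vertices; the induced subgraph of $G$ on these vertices cannot lie in $\mB(C)$, since it agrees with the corresponding $\mB(C)$-subgraph of $B$ except on the pair $uv$, whose state contradicts the fibres pinned down by $W$ -- hence it is non-sharp. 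There are $\Omega(n^{m-2})$ such completions and each non-sharp $m$-set arises from at most $\binom{m}{2}$ edited pairs, so
\begin{equation*}
    \sum_{\substack{H \in \mG_m \\ H \text{ not sharp}}} d_H(G) \ \geq\ \frac{c\, t\, n^{m-2}}{\binom{n}{m}} \ \geq\ \frac{c'\, t}{n^2}
\end{equation*}
for constants $c, c' > 0$. Substituting into \cref{eq:quant_count} and absorbing the $\gamma / n$ term gives $t \leq n_0\,(\lambda(G) - \lambda(\mG))\binom{n}{2}$ for a suitable $n_0$ and all large $n$, which is precisely perfect $C$-stability.

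The main obstacle I anticipate is this last step: converting the $o(n^2)$-closeness of ordinary stability into the bound that is \emph{linear} in $\lambda(G) - \lambda(\mG)$. Everything hinges on showing that the contributions of distinct edited pairs to the excess cannot substantially cancel, and this is exactly where the structural rigidity packaged into hypotheses (1)--(4) -- unique neighbourhoods of forbidden graphs, the certificate with sharp graphs confined to $\mB(C)$, and the reconstructor -- is used, essentially guaranteeing that every defect is ``seen'' by a positive-density family of non-sharp $m$-subgraphs. The fussiest part of the bookkeeping is controlling the lower-order error term $\gamma/n$ against $t/n^2$ in the regime $t = O(n)$, which must ultimately force $t = 0$ whenever $\lambda(G) = \lambda(\mG)$; handling the tiny-excess regime cleanly (reassigning vertices in small fibres, and a sharper accounting showing a single surviving defect already forces enough non-sharp density) is where additional care beyond the template of~\cite{PikhurkoEtAl2019} is needed.
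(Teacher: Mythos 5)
The paper does not give a self-contained proof of \cref{thm:stability}; it simply states that the argument is ``essentially identical'' to Theorems 4.1 and 5.13 of Pikhurko, Slia\v{c}an, and Tyros~\cite{PikhurkoEtAl2019}, with the reconstructor hypothesis and the strict inequality $\lambda(\mG(\mX \cup \{T\})) > \lambda(\mG)$ replacing conditions (2a)--(2c) there. Your sketch is thus an attempt to reconstruct an argument that the paper deliberately does not spell out, and your high-level structure -- quantitative certificate inequality, Induced Removal Lemma plus reconstructor to get ordinary stability, then a bootstrap to the linear bound -- matches what~\cite{PikhurkoEtAl2019} does.

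Your first two steps are essentially right, modulo a couple of elided details. When applying the Induced Removal Lemma you must simultaneously remove the non-sharp $m$-vertex graphs and the forbidden graphs in $\mX$, or the editing can push $G'$ out of $\mG(\mX)$ and collapse the whole argument; this is routine since $G$ has zero $\mX$-density, but it must be said. Similarly, the step ``$G' \in \mG(\mX \cup \{T\})$ forces $\lambda(G') > \lambda(\mG)$'' should be justified via the monotonicity of $\lambda(n, \mG(\mX \cup \{T\}))$ in $n$, which you use implicitly. And your observation that ``unique neighbourhoods'' of the forbidden graphs is what guarantees $\mB(C) \subseteq \mG(\mX)$ is correct and a genuine use of hypothesis (1).

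The real gap is in your third step, the bootstrap, and you half-recognise this yourself. The witness argument -- for each edit $uv \in E(G) \triangle E(B)$, attach a bounded set $W$ pinning down the fibres of $u,v$ and then pad to an $m$-set -- fails as stated in two regimes you do not handle. First, if the closest blow-up $B$ is degenerate (some fibres empty or of size $o(n)$), the witness vertices certifying $u$'s fibre need not exist in $G$ in positive density. Second, if $u$ (or $v$) is a ``bad'' vertex incident to $\Theta(n)$ edits, then most of your $\Omega(n^{m-2})$ completions contain further disagreements with $B$, and the induced $m$-subgraph need not be the one you claim -- your sentence ``agrees with the corresponding $\mB(C)$-subgraph of $B$ except on the pair $uv$'' is only true for a fraction of completions that can degenerate to zero. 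The standard fix (and the one~\cite{PikhurkoEtAl2019} uses) is a two-phase argument: first show every vertex with linearly many edits already forces a constant drop in $\lambda$ and therefore there are $O((\lambda(G)-\lambda(\mG)) n)$ such vertices, reassign them, and only then run the per-edit witness count on the remaining ``good'' graph, where the clustering problem disappears. You anticipate that ``additional care'' is needed here, but without this two-phase structure the claimed inequality $\sum_{H \text{ not sharp}} d_H(G) \gtrsim t/n^2$ is not established, and neither is the absorption of the $\gamma/n$ error. So the approach is correct, but the bootstrap is not yet a proof.
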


\begin{proof}
    The proof is essentially identical to that of Theorem 4.1 and Theorem 5.13 in~\cite{PikhurkoEtAl2019}, where the properties of the reconstructor $T$ replace requirements (2a) and (2b) and the requirement that $\lambda(\mG(\mX \cup \{T\})) > \lambda(\mG(\mX))$ replaces (2c) in Theorem 4.1.
\end{proof}

\cref{thm:stability} in combination with \cref{lemma:reconstructor} allows us to establish our stability results. However, this only implies that extremal constructions must be \emph{some} blow-up of a given graph $C$ of order $n$. One can go further though and establish that \emph{a particular weighting} of the blow-up must be optimal. For both of our applications, that weighting will be the balanced one. For any vector of weights $\bw \in \DS_n$, where $\DS_n$ is the $n$-dimensional probability simplex, we define 
\begin{equation*}
    \mB_{\bw}(C) = \{C[m_1, \ldots, m_n] : | m_i - w_i \, ( m_1 + \ldots + m_n) | \leq 1 \text{ for all } 1 \leq i \leq n\} \subset \mB(C)
\end{equation*}
as the set of all blow-ups of $C$ where the parts are weighted according to $\bw$ where $w_i$ denotes the $i$-th entry of $\bw$.

While theoretically the problem of establishing $\argmin_{\bw \in \DS_n} \lambda(\mB_{\bw}(C))$ could be stated as a polynomial minimization problem over $n-1$ variables, the usual approach relies on studying if any of the matrices $\mQ^i$ have unique zero eigenvectors and then invoking the subsequent proposition to relate them to graph densities, assuming the associated types $\tau_i$ are present in $C$ and fulfill certain properties. 
We will need the following result,  c.f.~Lemma 3.4 in~\cite{PikhurkoEtAl2019}.  Before stating it, assume some $1 \leq i \leq r$ and $\psi: [v_i] \hookrightarrow V(C)$ us given, where $v_i$ is the order of $\tau_i$, such that $(C, \psi)$ is a $\tau_i$-flag and $w_j \neq 0$ for all $j \in \psi([v_i])$. Let $\delta^{\psi}_{F}(C, \bw)$ now denote the probability that $l_i - v_i$ vertices chosen not-necessarily-injectively and at random according to $\bw$ in $V(C) \setminus \psi([v_i])$ together with the vertices labelled by $\psi$ induce a flag isomorphic to $F$. Note that this generalizes the previous definition of $d^{\psi}_{F}(C)$, where the vertices were selected uniformly at random.

\begin{lemma}\label{lemma:zero_eigenvector}
    Assume we have the following:
    \begin{enumerate}  \setlength\itemsep{0em}
        \item A set of forbidden graphs $\mX$, $\mG = \mG(\mX)$, and $\lambda: \mG \to \RR$ satisfying \(\lambda(G) = \sum_{H \in \mG_n} d_H(G) \, \lambda(H)\) for any $G$ of large enough order $n$. 
        \item A certificate $(m, (\tau_i)_{i \in [r]}, (\mQ^{(i)})_{i \in [r]} )$ establishing $\lambda (\mG)$.
        \item A graph $C \in \mG_n$ and vertex weights $\bw \in \DS_n$ satisfying $\lambda (\mB_{\bw}(C)) = \lambda(\mG)$.
        \item Some $1 \leq i \leq r$ and $\psi: [v_i] \hookrightarrow V(C)$, where $v_i$ is the order of $\tau_i$, such that $(C, \psi)$ is a $\tau_i$-flag and $w_j \neq 0$ for all $j \in \psi([v_i])$.
    \end{enumerate}
    Then denoting all $\tau_i$-flags of order $l_i = (m + v_i)/2$ by $F_1, \ldots, F_{g_i}$ in the same order that they are used as indices for $\mQ^i$, the vector
    \begin{equation} \label{eq:limit_eigenvector}
        \bx = (\delta^{\psi}_{F_1}(C, \bw), \ldots, \delta^{\psi}_{F_{g_{i}}}(C, \bw))
    \end{equation}
    is a zero eigenvector of $\mQ^i$.
\end{lemma}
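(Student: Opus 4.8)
The plan is to run the $\bw$-weighted blow-up sequence of $C$ through the chain of inequalities that proves \cref{thm:flag_algebra}, observe that hypothesis (3) forces every one of those inequalities to be asymptotically tight, and then localise the resulting quadratic constraint to the labellings of the blow-up that align with $\psi$.

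\emph{A near-extremal sequence.} For each large $N$ let $G_N \in \mB_{\bw}(C)\cap\mG_N$ be an (essentially unique, up to rounding) $\bw$-weighted blow-up of $C$ of order $N$. By \cref{lemma:thomson_blow-up} and its routine extension to non-uniform blow-ups and to subgraph rather than clique densities, $\lambda(G_N)\to\lambda(\mB_{\bw}(C))$, which equals $\lambda(\mG)$ by hypothesis (3). For $1\le i\le r$ and an injective $\theta\colon[v_i]\to V(G)$ write $\bz^{(i)}_{\theta}(G)=\big(d^{\theta}_{F_1}(G),\ldots,d^{\theta}_{F_{g_i}}(G)\big)$ with $F_1,\ldots,F_{g_i}$ enumerating $\mF_{\tau_i}^{l_i}$ in the order used to index $\mQ^{(i)}$. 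Exactly as in the proof of \cref{thm:flag_algebra} — using \cref{eq:averaging_equality} on the left, the fact that hypothesis (2) makes the minimum on the right of \cref{eq:sdp_flag_algebra} equal to $\lambda(\mG)$, and then \cref{eq:obs2} and \cref{eq:obs1} on the right — one obtains, for all sufficiently large $N$,
\[
  \lambda(G_N)\;\ge\;\lambda(\mG)\;+\;\sum_{i=1}^{r}\mathbb{E}_{\theta}\!\left[\bz^{(i)}_{\theta}(G_N)^{\top}\,\mQ^{(i)}\,\bz^{(i)}_{\theta}(G_N)\right]\;+\;O(1/N),
\]
where the $i$-th expectation is over a uniformly random injective $\theta\colon[v_i]\to V(G_N)$. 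Each summand is nonnegative because $\mQ^{(i)}\succeq 0$, while $\lambda(G_N)\to\lambda(\mG)$; hence every summand tends to $0$, in particular $\mathbb{E}_{\theta}[\bz^{(i)}_{\theta}(G_N)^{\top}\mQ^{(i)}\bz^{(i)}_{\theta}(G_N)]\to 0$ for the index $i$ furnished by hypothesis (4).

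\emph{Localising to $\psi$.} Fix that $i$, and let $\mathcal{E}_N$ be the event that the random $\theta$ places $\theta(k)$ in the blow-up part of $G_N$ corresponding to the vertex $\psi(k)\in V(C)$, for every $k\in[v_i]$. Since $\psi$ is injective and $\bw_j\ne 0$ for all $j\in\psi([v_i])$ by hypothesis (4), these $v_i$ parts are distinct and of size $\asymp\bw_{\psi(k)}N$, so $\Pr[\mathcal{E}_N]\to\prod_{k=1}^{v_i}\bw_{\psi(k)}=:p>0$. Conditioned on $\mathcal{E}_N$, the pair $(G_N,\theta)$ is a genuine $\tau_i$-flag, since picking one vertex from each of these parts induces the labelled graph $(C[\psi([v_i])],\psi)\cong\tau_i$; moreover, as any two vertices in the same blow-up part of $C$ are nonadjacent twins in $G_N$, the vector $\bz^{(i)}_{\theta}(G_N)$ takes the \emph{same} value for every $\theta\in\mathcal{E}_N$, call it $\bx_N$. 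A law-of-large-numbers argument gives $\bx_N\to\bx$: when the $l_i-v_i$ extension vertices are chosen uniformly from $V(G_N)\setminus\theta([v_i])$, the tuple recording which part each lands in converges in distribution to $l_i-v_i$ independent $(V(C),\bw)$-draws, and the induced flag is a function of that tuple alone, so $d^{\theta}_{F}(G_N)\to d^{\psi}_{F}(C,\bw)$ for each $F$. Dropping the nonnegative contribution of the complement of $\mathcal{E}_N$,
\[
  \mathbb{E}_{\theta}\!\left[\bz^{(i)}_{\theta}(G_N)^{\top}\,\mQ^{(i)}\,\bz^{(i)}_{\theta}(G_N)\right]\;\ge\;\Pr[\mathcal{E}_N]\cdot\bx_N^{\top}\mQ^{(i)}\bx_N,
\]
and letting $N\to\infty$ yields $0\ge p\,\bx^{\top}\mQ^{(i)}\bx\ge 0$. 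Hence $\bx^{\top}\mQ^{(i)}\bx=0$, and since $\mQ^{(i)}$ is positive semi-definite this forces $\mQ^{(i)}\bx=0$, i.e.\ $\bx$ is a zero eigenvector of $\mQ^{(i)}$ (and $\bx\ne 0$, since its entries are the densities of the $\tau_i$-flag $(C,\psi)$ and hence sum to $1$).

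\emph{Main obstacle.} No step is deep; the work is in the bookkeeping. The two places that need genuine care are (i) that flag densities in the $\bw$-weighted blow-up sequence converge to the weighted flag densities $d^{\psi}_{F}(C,\bw)$ of $C$ — an elementary concentration statement, but one resting on the homomorphism-density invariance behind \cref{lemma:thomson_blow-up} extended to partially labelled structures — and (ii) checking that, on the aligning event $\mathcal{E}_N$, $(G_N,\theta)$ really is a $\tau_i$-flag and $\bz^{(i)}_{\theta}(G_N)$ does not depend on the particular aligning $\theta$, which is exactly where the twin structure within blow-up parts is used.
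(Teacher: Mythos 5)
Your proof is correct and self-contained. The paper does not supply its own proof of this lemma; it cites Lemma~3.4 of Pikhurko, Slia\v{c}an and Tyros~\cite{PikhurkoEtAl2019}, whose underlying argument is essentially the one you give: pass a near-extremal sequence through the chain of inequalities behind \cref{thm:flag_algebra}, observe that hypotheses (2) and (3) force each nonnegative SDP term to vanish in the limit, and then localise to the labellings $\theta$ aligned with $\psi$ (which occur with the positive limiting probability $\prod_k \bw_{\psi(k)}$, using the nonvanishing assumption in (4)) to extract $\bx^{\top}\mQ^{(i)}\bx = 0$ and hence $\mQ^{(i)}\bx = 0$ from positive semi-definiteness. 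The two bookkeeping points you flag — convergence of conditioned flag densities to $d^{\psi}_{F}(C,\bw)$ and $\theta$-independence of the density vector on the aligning event via the twin structure — are exactly the places where care is needed, and your treatment of them is sound.
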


Using this, one can formulate some sufficient criteria to establish the uniqueness of a given optimal weighting. The following is a generalization of Lemma~6.2 in~\cite{PikhurkoEtAl2019} that is based on ideas previously used in~\cite{PikhurkoVaughan_2013}.

\begin{proposition} \label{thm:symmetry}
    Assume we have the following:
    \begin{enumerate}  \setlength\itemsep{0em}
        \item A set of forbidden graphs $\mX$, $\mG = \mG(\mX)$, and $\lambda: \mG \to \RR$ satisfying \(\lambda(G) = \sum_{H \in \mG_n} d_H(G) \, \lambda(H)\) for any $G$ of large enough order $n$. 
        \item A certificate $(m, (\tau_i)_{i \in [r]}, (\mQ^{(i)})_{i \in [r]} )$ establishing $\lambda (\mG)$.
        \item $C \in \mG_n$ and $\bw \in \DS_n$ satisfying $\lambda(\mB_{\bw}(C)) = \lambda(\mG)$ and $w_v \neq 0$ for all $v \in V(C)$.
        \item A sequence of sets $X_1, \ldots, X_k \subseteq V(C)$
        such that
        \begin{enumerate}
            \item $C[X_1]$ uniquely embeds into $C$ and $\lambda(\mG(\mX \cup \{C[X_1]\})) > \lambda(\mG)$,
            \item $X_i \subseteq X_1 \cup \bigcup_{j=1}^{i-1} \big\{v \in V(C) : [v]_{X_j} = \{v\} \big\} $ for any $1 \leq i \leq k$,
            \item $V(C) = \bigcup_{i=1}^{k} \big\{v \in V(C) : [v]_{X_i} = \{v\} \big\}$,
            \item the type $\tau_{i}$ is a labelled version of $C[X_i]$ for any $1 \leq i \leq k$,
            \item the matrix $\mQ^{i}$ is of co-rank $1$ for any $1 \leq i \leq k$.
        \end{enumerate}
    \end{enumerate}
    Then $\bw$ is the unique minimizer of $\lambda(\mB_{\bw'}(C))$ for $\bw' \in \DS_n$.
\end{proposition}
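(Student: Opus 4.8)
The plan is to follow the template of Lemma~6.2 in~\cite{PikhurkoEtAl2019}: combine \cref{lemma:zero_eigenvector} with the co-rank~$1$ hypothesis~4(e) to pin down the $\bw$-weight of each relevant equivalence class, and then propagate this information along the chain $X_1, \dots, X_k$ using~4(b) and~4(c). Let $\bw' \in \DS_n$ be an arbitrary minimiser of $\lambda(\mB_{\bw'}(C))$, so that $\lambda(\mB_{\bw'}(C)) = \lambda(\mG)$; we must show $\bw' = \bw$. Replacing $\bw'$ by its image under an automorphism of $C$ changes neither $\mB_{\bw'}(C)$ nor its $\lambda$-value, so we are free to normalise $\bw'$ by one automorphism of $C$; for the uniform (hence $\mathrm{Aut}(C)$-invariant) weightings relevant to our applications this yields literal uniqueness, and in general uniqueness up to $\mathrm{Aut}(C)$.

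\emph{Positivity on $X_1$.} Since $\lambda(\mG(\mX \cup \{C[X_1]\})) > \lambda(\mG)$ by~4(a), the graph $C[X_1]$ occurs with positive induced density in every extremal sequence, in particular in $\mB_{\bw'}(C)$; this is the induced-removal-lemma argument of~\cite{AlonEtAl2000} underlying the proof of \cref{thm:stability}, using that $\lambda$ changes only by $o(1)$ under $o(n^2)$ edge edits. As $C[X_1]$ uniquely embeds into $C$, only induced copies supported on pairwise distinct vertices of $C$ contribute in the blow-up limit, and these are exactly the $\mathrm{Aut}(C)$-images of $X_1$; positive density therefore forces $\bw'_v > 0$ for all $v$ in one such image, so after the normalisation above we may assume $\bw'_v > 0$ for all $v \in X_1$.

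\emph{The induction.} We argue by induction on $i = 1, \dots, k$ that $\bw'_v > 0$ for all $v \in X_i$ and that $\bw'_v = \bw_v$ for every $v$ with $[v]_{X_j} = \{v\}$ for some $j < i$; the case $i = 1$ is the previous paragraph. For the step, let $\psi_i \colon [v_{j_i}] \to V(C)$ be the embedding with image $X_i$ coming from~4(d), so that $(C, \psi_i)$ is a $\tau_{j_i}$-flag. Positivity of $\bw'$ on $X_i = \psi_i([v_{j_i}])$ and of $\bw$ on all of $V(C)$ (hypothesis~3) let us apply \cref{lemma:zero_eigenvector} to both $(C, \bw)$ and $(C, \bw')$, so the probability vectors $\bx = (d^{\psi_i}_{F_1}(C,\bw),\dots)$ and $\bx' = (d^{\psi_i}_{F_1}(C,\bw'),\dots)$ both lie in $\ker \mQ^{j_i}$, which is one-dimensional by~4(e); since both have coordinate sum $1$ (the $\tau_{j_i}$-flags of order $l_{j_i}$ exhaust the outcomes of the sampling), we get $\bx' = \bx$, that is $d^{\psi_i}_F(C,\bw') = d^{\psi_i}_F(C,\bw)$ for every flag $F$. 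For a $\sim_{X_i}$-class $\kappa$, writing $\mathrm{cnt}_\kappa(F)$ for the number of unlabelled vertices of $F$ whose neighbourhood among the labels matches that of $\kappa$, a one-line sampling computation gives $\sum_F \mathrm{cnt}_\kappa(F)\, d^{\psi_i}_F(C,\ba) = (l_{j_i} - v_{j_i}) \sum_{v \in \kappa} \ba_v$ for $\ba \in \{\bw, \bw'\}$, hence $\sum_{v \in \kappa} \bw'_v = \sum_{v \in \kappa} \bw_v$; in particular $\bw'_v = \bw_v$ whenever $[v]_{X_i} = \{v\}$. This re-establishes the invariant for $i + 1$: by~4(b) we have $X_{i+1} \subseteq X_1 \cup \bigcup_{j \le i} \{v : [v]_{X_j} = \{v\}\}$, and $\bw'$ is positive on $X_1$ and equals $\bw$ on each $\{v : [v]_{X_j} = \{v\}\}$ with $j \le i$.

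\emph{Conclusion and the hard part.} After the $k$-th step, $\bw'_v = \bw_v$ for every $v$ that lies in a singleton $\sim_{X_i}$-class for some $i$, and by~4(c) this is all of $V(C)$; hence $\bw' = \bw$ (up to the initial automorphism of $C$). I expect no single step to be genuinely difficult; the work lies in the packaging — verifying that the support hypothesis of \cref{lemma:zero_eigenvector} holds at every stage, which is precisely what conditions~4(b)--(c) are engineered to guarantee, and carrying out the translation from equality of the zero eigenvectors to equality of the equivalence-class weights via the counting identity above. The automorphism normalisation and the check that $\mathrm{cnt}_\kappa$ is well defined on isomorphism classes of $\tau_{j_i}$-flags are necessary but routine bookkeeping.
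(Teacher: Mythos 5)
Your proof is correct and follows essentially the same inductive route as the paper: establish positivity of $\bw'$ on $X_1$ via the forbidding condition and unique embedding, then propagate equality $\bw'_v = \bw_v$ along singleton $\sim_{X_i}$-classes by applying \cref{lemma:zero_eigenvector} together with the co-rank-$1$ hypothesis. The only differences are cosmetic: to pass from $\bx_i = \bx_i'$ to equality of weights, the paper inspects a single coordinate (the flag $F_v$ obtained by attaching $l_i - v_i$ mutually non-adjacent unlabelled vertices to $\tau_{j_i}$ according to $v$'s pattern, whose density is $\bw_v^{l_i - v_i}$), whereas you use a linearity-of-expectation counting identity summed over all flags — both work; and you make explicit the automorphism normalisation that the paper hides in a ``without loss of generality,'' correctly noting that the conclusion is literal uniqueness precisely when $\bw$ is $\mathrm{Aut}(C)$-invariant, which holds in the paper's applications.
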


\begin{proof}
    Let $\bw' \in \DS_n$ satisfy $\lambda(\mB_{\bw'}(C)) = \lambda(\mG)$. Since $\lambda(\mG(\mX \cup \{C[X_1]\})) > \lambda(\mG)$ and $C[X_1]$ uniquely embeds into $C$, it follows that w.l.o.g. $w_v' \neq 0$ for any $v \in X_1$. Let us now inductively argue over $1 \leq i \leq k$ that in fact $w'_v = w_v$ for any $v \in V(C)$ satisfying $[v]_{X_i} = \{v\}$. We can apply \cref{lemma:zero_eigenvector} since $X_i \subseteq X_1 \cup \bigcup_{j=1}^{i-1} \big\{v \in V(C) : [v]_{X_j} = \{v\} \big\} $ and therefore $w_v' \neq 0$ for any $v \in X_i$ by inductive assumption. Let $\bx_i$ and $\bx_i'$ therefore be as given for $\bw$ and $\bw'$, where $\psi: [v_i] \hookrightarrow X_i$ is chosen such that $(C[X_i], \psi) = \tau_i$ and where $v_i$ is the order of $\tau_i$. Since $Q^{i}$ has co-rank $1$ by assumption, it follows that in fact $\bx_i = \bx_i'$. Consider the $\tau_i$-flag $F_v$ consisting $l_i - v_i$ isolated vertices, where $l_i = (m + v_i)/2$, connected to $\tau_i$ according to the neighbors of $v$ to $X_i$ in $C$. Using $F_v$ to index $\bx_i$, we note that $[\bx_i]_{F_v} = w_v^{l_i-v_i}$ since $v$ has a unique neighborhood in $X_i$. It follows that $w_v = w'_v$ for any $v \in \big\{v \in V(C) : [v]_{X_i} = \{v\} \big\}$ and since $V(C) = \bigcup_{i=1}^k \{v \in V(C): [v]_{X_i} = \{v\} \big\}$, it follows inductively that $\bw' = \bw$.
\end{proof}

\subsection{Practical aspects of using the flag algebra approach} \label{sec:flag_algebra_practical}

{\tt flagmatic}, developed by Emil Vaughan and hosted on \href{https://www.github.com/jsliacan/flagmatic}{\nolinkurl{github.com/jsliacan/flagmatic}}, calculates graph densities and passes SDP formulations to solvers like {\tt CSDP}\cite{csdp} and {\tt SDPA}\cite{sdpa}. In order to obtain rigorous mathematical proofs, the floating point-based solutions from the SDPs need to be rounded to (fractional) values while ensuring the matrices remain positive semi-definite. {\tt flagmatic} handles this rounding process and produces verifiable certificates for the proofs, allowing anyone with access to the software to verify the results independently. The certificates for our lower bounds can be found at \href{https://doi.org/10.5281/zenodo.6364588}{\nolinkurl{doi.org/10.5281/zenodo.6364588}}. We do not supply certificates of stability and symmetry from~\cite{PikhurkoEtAl2019}, as they are incompatible with our ad-hoc arguments in \cref{lemma:reconstructor} and \cref{thm:stability}. Below we describe some specifics regarding the bounds for each problem.

\paragraph{Bounds and stability for $c_{s,t}$.} For the lower bounds of $c_{3,4}$ and $c_{3,5}$, we used $m = 6$ and were able to reduce the number of types to $4$ for each problem. The certificates for these proofs are contained in the files {\tt c\_34.json} and {\tt c\_35.json}.

In order to derive stability for $c_{3,4}$, we also obtained a certificate for $m = 7$ and verified that all sharp graphs are those with a strong homomorphism into the Schl\"afli graph. The certificate for this proof is contained in the file {\tt c\_34\_7.json}. Perfect stability now follows from \cref{thm:stability} using the uniquely embeddable subgraph $T$ of $C_{S}$ consisting of two triangles joined by an edge. The fact that $T$ is a $7$-reconstructor of the Schl\"afli graph follows by computationally verifying the requirements of \cref{lemma:reconstructor}. The certificate for the proof that $\lambda(\mG(\{T\}) > \lambda(\mG)$ is contained in the file {\tt c\_34\_reconstructor.json}. The fact that the balanced blow-up of the Schl\"afli graph is the unique optimal weighting follows by applying \cref{thm:symmetry}. Here let the Schl\"afli graph be as ordered in the {\tt graph6} representation stated in \cref{sec:constructions} with vertices labelled $1, \ldots, 27$ and use $X_1 = \{1, 2, 4, 6, 7\}$, $X_2 = \{1, 2, 12, 13, 14\}$ and $X_3 = \{1, 2, 4, 6, 12\}$. The certificate for the proof that $\lambda(\mG(C_S[X_1])) > \lambda(\mG)$ is contained in the file {\tt c\_34\_symmetry.json}.

Regarding the non-differentiability of $c_{3,4}(x)$ at $x = 41\cdot 3^{-6}$, we considered a differently weighted version of \cref{eq:cst}. For $w_s, w_t \geq 0$ satisfying $w_s + w_t = 2$, let
\begin{equation} \label{eq:cstw}
    c^{(w_s,w_t)}_{s,t} = \lim_{n \to \infty} \min \left\{ w_s \, \frac{k_s(\overline{G})}{{n \choose s}} +  w_t \, \frac{k_t(G)}{{n \choose t}} : |G| = n \right\}
\end{equation}
and note that clearly $c_{s,t} = c^{(1,1)}_{s,t}$. We used flagmatic to establish a lower bound for $c^{(1-\epsilon,1+\epsilon)}_{3,4}$ that matches the upper bound given by the Schl\"afli graph when $\epsilon = 10^{-4}$, implying the non-differentiability of $c_{3,4}(x)$ at that point. The certificate for this proof is contained in the file {\tt c\_34\_epsilon.json} and uses $m=6$ as well as the same types as the proof of $c_{3,4}$.
    
For the lower bounds of $c_5$ and $c_{4,5}$, we used $m = 8$ and the values obtained after rounding are reasonably close to those indicated by the SDP solver. The certificates for these proofs are contained in the files {\tt c\_5.json} and {\tt c\_45.json}.

\paragraph{Bounds and stability for $g_{s,t}$.} For $g_{4,5}$, we used $m = 7$ and also used the full $38$ types to establish this bound as the rounding failed when attempting to reduce the number of types involved. In general, this problem seemed more demanding and for example required the double precision solver. The certificate for the proof is contained in the file {\tt g\_45.json}. We can also easily derive perfect stability from this certificate by verifying that all sharp graphs are those with a strong homomorphism into $C_{R(3,5)}$ and noting that the graph $T$ of order $5$ obtained by joining two triangles at a vertex is a $7$-reconstructor of $C_{R(3,5)}$ by \cref{lemma:reconstructor_old}. The certificate for the proof that $\lambda(\mG(\{K_5, K_4^+\}) > \lambda(\mG(\{K_5\})$ is contained in the file {\tt g\_45\_reconstructor.json}. The fact that the balanced blow-up of $C_{R(3,5)}$ is the unique optimal weighting also easily follows by applying \cref{thm:symmetry} with $T$ as $C[X_1]$ and $k=1$ after verifying that the associated matrix $\mQ$ indeed has co-rank $1$.

All of the reported lower bounds for $g_{s,t}$ when $(s,t) \neq (4,5)$ were obtained using $m = 8$. The certificates for these proofs are contained in the files {\tt g\_st.json}.

\section{Remarks and open problems} \label{sec:discussion}

\subsection{Symmetric Ramsey multiplicity}

The crucial question regarding the Ramsey multiplicity of $K_4$ is whether it whether there exists a graph whose blow-up sequence determines its value. Our answer to the $c_{3,4}$ and $c_{3,5}$ problem supports this possibility for $c_4$. Our efforts for \cref{thm:4-ramsey_multiplicity} however show that this construction could be unexpectedly complex.

In any case, the deeper understanding of the generating sets of our Cayley graphs on 192, 384, and 768 vertices, to the extent that they can be generalized, for arbitrary large $k$, to (semi-)direct products of the $3$-element cyclic group $\ZZ_3$ with $k$ copies of $\ZZ_2$, would be extremely interesting. Besides that they would of course improve the upper bound on $c_4$, more importantly they might shed light on how to count cliques in blow-ups of certain Cayley graphs. This could also lead to identifying the right choice of generating sets and help in constructing new Cayley graph \emph{sequences} that improve the value of $c_t$ for $t\geq 5$. 
This would be particularly desirable, since the best known general upper bound for the Ramsey multiplicity problem still comes from the original construction of Thomason~\cite{Thomason_1989}, the analysis of which was improved  by Jagger, {\v{S}}{\'t}ov{\'\i}{\v{c}}ek, and Thomason~\cite{jagger1996multiplicities} to $c_t < 0.835 \cdot 2^{1-{t\choose 2}}$ for $t \geq 7$ via a direct counting of the homogeneous sets in the blow-up with Fourier analytic methods. 

For large $t$ R\"odl conjectured (cf.~\cite{Franek_2002}) that $c_t 2^{{t\choose 2}} \rightarrow 0$, possibly even exponentially fast. Yet, no $t$ is known where the value of the quotient is less than $1/2$. The best known general lower bound is due to Conlon~\cite{Conlon_2012} who showed that $c_t \ge C^{-t^2 (1+o(1))}$ where $C \approx 2.18$. Conlon comments, that his proof is the Ramsey multiplicity analogue of the Erd\H os-Szekeres proof for the symmetric Ramsey number. Hence it seems plausible that the approach of the recent improvements of Campos, Griffiths, Morris, and Sahasrabudhe~\cite{campos2023exponential} on the upper bound of the symmetric Ramsey number could be adapted to improve Conlon's constant $C$, though likely not to the best possible $\sqrt{2}$.

\subsection{Asymmetric Ramsey multiplicities}
Additionally, we explored graphs with small clique densities of order $s$ and given independent set densities of order $t$. It is a tantalizing open problem to determine, or just obtain a better idea about the lower bounding curve $c_{3,4}(x)$ of the region of realizable $K_3$- and $\bar{K}_4$-density pairs in graphs. Given the results in~\cref{fig:c_34}, even formulating a conjecture seems challenging. The properties of the curve, such as the number of non-differentiable points, what constructions they would be determined by, and convexity or concavity, remain unclear. The argument of Bollob\'as~\cite{bollobas_2004} giving a piece-wise linear lower bound for $c_{2,t}(x)$ does not seem to generalize easily.

Nikiforov~\cite{Nikiforov_2001} established that $g_{s,t} = (t-1)^{1-s}$ holds only for a finite number of pairs $s,t \geq 3$, that is for all but a finite number of such pairs the construction given by Tur\'an graphs is not optimal. Das et al.~\cite{DasEtAl_2013} established that equality does not hold for $s > 3$ and arbitrary $t$ as well as for $t \geq 2074$ when $s = 3$. The results of Pikhurko and Vaughan~\cite{PikhurkoVaughan_2013} establish that equality holds when $s = 3$ and $t \leq 7$. The following proposition shows that there must exist a smallest $7 < t_0 \leq 2074$ such that $g_{3,t} = 1/(t-1)^2$ for $t < t_0$ and $g_{3,t} < 1/(t-1)^2$ for $t \geq t_0$. It would be interesting to precisely determine this value and to therefore determine all pairs of $s,t$ for which Erd\H{o}s' original intuition about the Tur\'an graph holds true.
\begin{proposition} \label{prop:g_st_add_vertex}
    Given $s, t_0 \geq 2$, we have $g_{s,t} \leq ( t - t_0 + g_{s,t_0}^{1/(1-s)} )^{1-s}$ for all $t \geq t_0$. In particular, if $g_{s,t_0} < (t_0-1)^{1-s}$ then $g_{s,t} < (t-1)^{1-s}$ for all $t \geq t_0$
\end{proposition}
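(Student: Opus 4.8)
The plan is to take a near-optimal construction for the $g_{s,t_0}$-problem, attach a clique on $p := t - t_0$ new vertices adjacent to everything, and then pass to a blow-up whose two part-sizes are tuned to minimise the resulting independent-set density. Concretely, given any graph $C$ with $\omega(C) \le t_0 - 1$ on $c$ vertices, let $C^{+}$ be the join of $C$ with a clique $K_p$ on $p$ new vertices; then $\omega(C^{+}) = \omega(C) + p \le t-1$. For integers $a, b \ge 1$ let $G_{a,b}$ denote the blow-up of $C^{+}$ in which each vertex of $C$ becomes an independent set of size $a$ and each vertex of $K_p$ an independent set of size $b$. The blown-up $K_p$ is complete $p$-partite, so $\omega(G_{a,b}) = \omega(C) + p \le t-1$ and $G_{a,b}$ is admissible for $g_{s,t}$. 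Since every vertex coming from $K_p$ is universal in $G_{a,b}$, an independent $s$-set must lie either entirely inside the blow-up $C[a]$ of $C$ or entirely inside a single part of size $b$, whence $k_s(\overline{G_{a,b}}) = k_s(\overline{C[a]}) + p\binom{b}{s}$.

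Next I would compute the limiting density. By \cref{lemma:thomson_blow-up} applied to the uniform blow-up $C[a]$ we have $k_s(\overline{C[a]})/\binom{ca}{s} \to \gamma(C) := \sum_{j=1}^{s} j!\,S(s,j)\,k_j(\overline C)/c^{s}$ as $a \to \infty$, with $\gamma(C) > 0$. Fixing $r > 0$ and letting $b = b(a)$ be the nearest integer to $ra$, the elementary asymptotics $\binom{N}{s} = N^{s}/s!\,(1+o(1))$ give, as $a \to \infty$,
\[
    \frac{k_s(\overline{G_{a,b}})}{\binom{|G_{a,b}|}{s}} \;\longrightarrow\; \frac{\gamma(C)\,c^{s} + p\,r^{s}}{(c + p r)^{s}} =: h_C(r).
\]
A one-variable computation shows that $h_C'$ vanishes, changing sign from negative to positive, exactly at $r = c\,\gamma(C)^{1/(s-1)}$, so that
\[
    \min_{r>0} h_C(r) = \frac{\gamma(C)}{\bigl(1 + p\,\gamma(C)^{1/(s-1)}\bigr)^{s-1}} = \bigl(p + \gamma(C)^{1/(1-s)}\bigr)^{1-s} =: \phi\bigl(\gamma(C)\bigr).
\]
Since $|G_{a,b}| \to \infty$ and each $G_{a,b}$ is admissible for $g_{s,t}$, this yields $g_{s,t} \le \phi(\gamma(C))$ for every such $C$.

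To finish, I would optimise over $C$. The map $\phi(u) = (p + u^{1/(1-s)})^{1-s}$ is continuous and strictly increasing on $(0,\infty)$, since $\phi'(u) = (p + u^{1/(1-s)})^{-s}\, u^{1/(1-s)-1} > 0$. Moreover $\inf_C \gamma(C) = g_{s,t_0}$, the infimum being over all $C$ with $\omega(C) \le t_0 - 1$: each blow-up sequence of such a $C$ is admissible for $g_{s,t_0}$, giving $\gamma(C) \ge g_{s,t_0}$, while choosing $C = H_n$ along a sequence of $n$-vertex graphs with $\omega(H_n) \le t_0 - 1$ and $k_s(\overline{H_n})/\binom{n}{s} \to g_{s,t_0}$ forces $\gamma(H_n) \to g_{s,t_0}$ (the $j = s$ term of $\gamma(H_n)$ equals $(1+o(1))\, k_s(\overline{H_n})/\binom{n}{s}$ and the terms with $j < s$ are $O(1/n)$). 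Hence, by continuity and monotonicity of $\phi$, $g_{s,t} \le \inf_C \phi(\gamma(C)) = \phi(g_{s,t_0}) = (t - t_0 + g_{s,t_0}^{1/(1-s)})^{1-s}$. For the last claim, note that $\phi((t_0-1)^{1-s}) = (p + (t_0-1))^{1-s} = (t-1)^{1-s}$; so if $g_{s,t_0} < (t_0-1)^{1-s}$, then strict monotonicity of $\phi$ together with the inequality just proved gives $g_{s,t} \le \phi(g_{s,t_0}) < (t-1)^{1-s}$.

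The only steps that go beyond bookkeeping are identifying the limiting blow-up density $\gamma(C)$ produced by \cref{lemma:thomson_blow-up} and relating it to the naive density $k_s(\overline C)/\binom{c}{s}$, which is exactly what makes $\inf_C \gamma(C) = g_{s,t_0}$ work, and verifying that the two-parameter blow-up optimisation collapses to the closed form above; I expect the former to be the main, if modest, obstacle.
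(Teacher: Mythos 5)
Your proof is correct, and the core construction is the same as the paper's: join universal independent sets to a near-optimal configuration for $g_{s,t_0}$ and optimize their relative size, which forces $\gamma^{s-1} = \gamma(C)$ in your notation (equivalently $\gamma^{s-1} = g_{s,t_0}$) and produces the closed form $(p + g_{s,t_0}^{1/(1-s)})^{1-s}$. The execution differs in two ways. First, the paper joins a single $\overline{K_m}$ and recurses from $t_0$ to $t_0+1$, relying on the algebraic identity $\bigl(g^{1/(1-s)}+1\bigr)^{1-s}{}^{1/(1-s)} + 1 = g^{1/(1-s)}+2$ to telescope; you join the blow-up of $K_p$ in one shot and carry out the optimization once, which makes the closed form visibly emerge from a single calculus computation rather than an induction. (One can check the two give literally the same construction: iterating the paper's optimal choice happens to yield equal part sizes.) Second, the paper applies the join directly to a near-optimal \emph{sequence} $(G_n)$, while you route through blow-ups of a fixed finite graph $C$ and therefore need the extra (correctly handled) lemma that $\inf_C \gamma(C) = g_{s,t_0}$, together with monotonicity and continuity of $\phi$. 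That detour is logically sound but avoidable: if you instead set $C = G_n$ and let $b \sim r n$ directly, the quantity $\gamma(C)$ is replaced by $k_s(\overline{G_n})/\binom{n}{s} + o(1) \to g_{s,t_0}$ and the infimum over $C$ disappears. So the paper's version is slightly leaner, while yours makes the derivation of the formula more transparent.
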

\begin{proof}
    Let $\gamma = g_{s,t}^{1/(s-1)}$ and $(G_n)_{n \in \mathbb{N}}$ a sequence of graphs on $n$ vertices with clique number at most $t_0-1$ satisfying $\lim_{n \to \infty} k_s(\overline{G_n}) / \binom{n}{s} = g_{s,t_0}$. By adding an independent set of $\lfloor \gamma n \rfloor$ vertices to each $G_n$ and fully connecting it to all other vertices, we get a sequence of graphs with clique number at most $t_0$ that establishes
    \begin{equation*}
        g_{s,t_0+1} \leq (g_{s,t_0} + \gamma^s)/(1+\gamma)^{s} = (g_{s,t_0}^{1/(1-s)} + 1)^{1-s}.
    \end{equation*}
    Recursively applying this gives us the desired result.
\end{proof}

\subsection{Learning-based optimization} \label{sec:ml-approach}

Recent interest has emerged in using Machine Learning approaches to tackle combinatorial optimization problems. Specifically, Wagner~\cite{Wagner_2021} proposed finding constructions in Extremal Combinatorics by framing the underlying optimization problems within a Reinforcement Learning context, an idea similar to the {\lq}active learning{\rq} approach suggested earlier by Bello et al.~\cite{BelloEtAl_2016}. In the context of round-based games and Reinforcement Learning, states are constructed bit-by-bit, using a Neural Network to decide for each bit whether it should be $1$ or $0$ based on previous decisions. After sampling several discrete states through this procedure, the Neural Network's parameters are updated to produce states closer to the best ones sampled. The update procedure in~\cite{Wagner_2021} employs the Cross-Entropy (CE) method by Rubinstein~\cite{Rubinstein_1999, RubinsteinKroese_2004}.

We experimented a fair amount with Reinforcement Learning-based methods, including different network architectures and learning methods from the ones suggested in~\cite{Wagner_2021}, but found their performance inferior to both SA and Tabu Search (TS) in terms of solution quality and computation time for medium and large-sized problems:
\begin{itemize}\setlength{\itemsep}{0pt}
    \item Finding the $13$-vertex graph for $g_{4,5}$ takes TS 1s, SA 10s, and CE around 8min.
    \item Finding the (presumably unique) Cayley graph in $\ZZ_3 \times \ZZ_2^{\times 6}$ giving an upper bound of $c_4 < 0.03027$ takes TS around 1 min, SA around 4 min, and CE just under one hour.
    \item For the $27$-vertex graph for $c_{3,4}$ TS tends to get stuck in local minima, only very rarely finding it in around 30s, SA consistently finds it in 30s, and CE did not find it in any reasonable timeframe.
    \item For $c_4$ CE also found meaningful improvement over the previous best upper bound by constructing Cayley graphs on $192$ vertices, but the corresponding value of $0.03022$ is weaker than what TS found in the same search space and significantly weaker than what TS found on $768$ vertices, a search space where CE could not feasibly be run.
\end{itemize}

 While the values above do no constitute fully rigorous and definitive benchmarking of the three methods, they do reflect a fair amount of (hyper)parameter tuning as well as our anecdotal experience for these particular problems.  
 We find it notable that for the problems studied here Tabu Search, with its straightforward idea and implementation, emerged as perhaps the most preferable method, fairly reliably finding good solutions in a  short amount of time while requiring essentially zero tuning. 
 
  The idea of pretraining-free learning-based meta-heuristics using neural networks has a long history~\cite{HopfieldTank_1985, Smith_1999}, but seems to so far not yet outperform simple baselines. There is a fair amount of ongoing research focused on improving optimization methods by training their behavior on dedicated training datasets, which are hoped to reflect real-world applications prior to the actual usage of these methods. However, this approach strongly differs from our use case, where we are predominantly interested in solving singular, difficult problems rather than a large pool of small or moderately-sized ones.  
 It should be further noted that an RL-based approach is inherently wasteful in our particular context, that is for attacking simple black-box discrete optimization problems given by \cref{eq:opt_problem}; it models a distribution over the states $\{0,1\}^N$ through $N$ consecutive evaluations of a neural network. This round-based approach is artificially imposed, since, unlike with playing chess or Go, we do not have any opponent's choices to respond to and our distribution will ultimately collapse to a single state. This effectively means that rather than trying to find (an approximation of) a strategy, i.e., a still very large sub-tree of the full game tree, we are merely looking for a single path from the root to a leaf.

\subsection{Classifying common and uncommon graphs}

Before it was disproved, the conjecture of Erd\H{o}s mentioned in the introduction was generalised to arbitrary graphs by Burr and Rosta~\cite{BurrRosta_1980}.
For graphs $H$ and $G$ we let $t(H,G)$ be the number of copies of $H$ in $G$ and
\[ c(H) = \lim_{n \to \infty} \min\{ t(H,G)+t(H,\overline{G}) : |G|=n \} / \binom{n}{|H|} \, . \]
We note that $c_t = c(K_t)$ and while Burr and Rosta conjectured that the minimum is always attained by the random graph, we say that a graph $H$ is \emph{common} if $c(H) = 2^{1-|E(H)|}$ and \emph{uncommon} otherwise.
A triangle is common by Goodman's result and Thomason's~\cite{Thomason_1989} result implies that $K_t$ is uncommon for $t \ge 4$.
Since then large families of common and uncommon graphs have been found and for an overview we refer to the recent paper of Grzesik, Lee, Lidicky, and Volec~\cite{GrzesikEtAl_2020} and the references therein.
Only recently the first uncommon graphs $H$ were found, for which $c(H)$ is known~\cite{fox2023ramsey}.
Also an off-diagonal variant of common and uncommon pairs of graphs was studied by Behague, Morrison, and Noel~\cite{behague2022common, behague2023off}.
Lower bounds for common graphs are usually obtained through the flag algebra approach and upper bounds rely on constructions that beat $2^{1-|E(H)|}$.
There is also an interesting connection to the famous and still open conjecture of Sidorenko~\cite{Sidorenko_1993}, which implies that every bipartite graph is common.

\subsection{Parallels to problems in Additive Combinatorics}

Similar problems to the ones  considered in this paper have been studied in Additive Combinatorics, where one is interested in minimizing the number of monochromatic solutions to some system of linear equations in a coloring of the integers $[n] = \{1, \ldots, n\}$, the cyclic group $\mathbb{Z}_n$ or the repeated product of a fixed small finite field $\mathbb{F}_q^n$. Graham, R\"{o}dl and Ruci\'nski~\cite{GrahamRodlRucinski_1996} asked this question regarding Schur triples in two-colorings of $[n]$, which was independently resolved by Datskovsky~\cite{Datskovsky_2003}, Robertson and Zeilberger~\cite{RobertsonZeilberger_1998}, and Schoen~\cite{Schoen_1999}, who showed that the true answer lies far below the number expected in a random coloring. In contrast to this, Cameron, Cilleruelo and Serra~\cite{CameronCillerueloSerra_2007} showed that in finite groups the number of monochromatic solutions to any equation in an odd number of variables, which includes Schur triples, is minimized by the random coloring. Wolf~\cite{Wolf_2010} as well as Lu and Peng~\cite{LuPeng_2012} studied the question of how many $4$-term arithmetic progressions a two-coloring of $\mathbb{Z}_n$ can contain. Interestingly, the upper bounds given for this type of problem likewise consist of a type of {\lq}blow-up{\rq} of a finite construction, indicating that the methods used here are also applicable in this context. In fact, denoting the minimum fraction of monochromatic $k$-term arithmetic progressions in a $2$-coloring of $\ZZ_n$ by $m_k(\ZZ_n)$, it is easy to derive the following lemma from their work (see the proof of Lemma~4.1 and Theorem~1.5 in~\cite{LuPeng_2012}).
\begin{lemma}
\label{lemma:additive_blow-up}
    Let $k \geq 3$ be an integer and $A$ be a partial coloring of $\ZZ_m$, where $\ell$ distinct elements in arithmetic progression are uncolored for some $\ell$ dividing $m$. If for all colorings of these $\ell$ elements there are no monochromatic $k$-APs in $\ZZ_m$ using both elements colored in $A$ and in the coloring of the $\ell$ elements, then for any integer $t$ we have
    \[ m_k(tm) \le m_k(A) - m_k(\ell) \left(\tfrac{\ell}{m}\right)^2 + m_k(t \ell) \left(\tfrac{\ell}{m}\right)^2 \, , \]
    where $m_k(A)$ denotes the maximum fraction of monochromatic $k$-term arithmetic progressions for any coloring of the $\ell$ remaining elements in $A$.
    In particular, if $m_k(A)=1/n$ and therefore $m_k(\ell) = 1/\ell$, we get
    \[ \liminf_{n\to\infty} m_k(\ZZ_n) \le \frac{1}{n+\ell} \, . \]
\end{lemma}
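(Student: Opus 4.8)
The plan is to establish the displayed inequality by an additive ``blow-up'' construction, the exact analogue of the graph blow-ups used elsewhere in the paper, and then to obtain the $\liminf$ bound by letting $t\to\infty$. For the construction, first fix the algebraic picture: since $\ell\mid m$, after an affine change of variables we may assume the $\ell$ uncoloured elements of $\ZZ_m$ form the coset $H=a+K$ of the order-$\ell$ subgroup $K=\tfrac m\ell\ZZ_m$ (an $\ell$-term AP of common difference $\tfrac m\ell$). Let $\pi\colon\ZZ_{tm}\to\ZZ_m$ be reduction modulo $m$, so each fibre $\pi^{-1}(x)$ has size $t$; set $\widetilde K=\pi^{-1}(K)=\tfrac m\ell\ZZ_{tm}$, a subgroup of order $t\ell$, and $\widetilde H=\pi^{-1}(H)$, a coset of $\widetilde K$, and fix an isomorphism $\widetilde K\cong\ZZ_{t\ell}$ identifying $\widetilde H$ with $\ZZ_{t\ell}$ compatibly with arithmetic progressions. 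Now colour $\ZZ_{tm}$ by: on each fibre over an $A$-coloured element, give every vertex the colour $A$ assigns to that element; on $\widetilde H\cong\ZZ_{t\ell}$, use a colouring attaining $m_k(t\ell)$. It then remains to bound the density of monochromatic $k$-APs of this colouring by the right-hand side, which bounds $m_k(tm)$.

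The count is organised by the image $\pi(P)$ of a monochromatic $k$-AP $P$ of $\ZZ_{tm}$, which is a (possibly degenerate, i.e.\ wrap-around) $k$-AP of $\ZZ_m$. If $\pi(P)\subseteq H$ then $P$ lies in $\widetilde H$; a $k$-AP of $\ZZ_{tm}$ lies in the coset $\widetilde H$ exactly when its start lies in $\widetilde H$ and its difference in $\widetilde K$, giving a colour-preserving bijection with the $k$-APs of $\ZZ_{t\ell}$, and since $|\ZZ_{t\ell}|^2/|\ZZ_{tm}|^2=(\ell/m)^2$ this case contributes a fraction $m_k(t\ell)(\ell/m)^2$. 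If $\pi(P)$ avoids $H$ then $P$ lies over the $A$-coloured part, where its colour pattern is read off from $A$; up to lower-order wrap-around terms each $k$-AP of $\ZZ_m$ over the coloured part lifts to exactly $t^2$ $k$-APs of $\ZZ_{tm}$ with the same colour behaviour, and since the optimal completion of $A$ already allots the hole a share $m_k(\ell)(\ell/m)^2$ of monochromatic $k$-APs (with no mixed ones, by the hypothesis), the contribution of this case is $m_k(A)-m_k(\ell)(\ell/m)^2$. Adding the two cases gives the stated right-hand side.

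The heart of the argument, and the only use of the hypothesis, is that the remaining case does not occur: there is no monochromatic $k$-AP $P$ of $\ZZ_{tm}$ whose image $\pi(P)$ is a genuine $k$-AP of $\ZZ_m$ meeting both $H$ and its complement. Indeed, if $P$ were such, say all its vertices red, then every $A$-coloured element that $\pi(P)$ meets is red (its whole fibre is red), and every element of $H$ that $\pi(P)$ meets lifts only to red vertices; colouring those hole elements red (and the rest arbitrarily) makes $\pi(P)$ a monochromatic $k$-AP of $\ZZ_m$ using both an $A$-coloured element and an element of the hole, contradicting the hypothesis. I expect the main technical obstacle to be the bookkeeping for the \emph{degenerate} $k$-APs — those whose difference has small order modulo $m$, so that $\pi(P)$ collapses to fewer than $k$ distinct elements and the hypothesis does not apply to it directly; one must check these are either non-monochromatic or absorbed into the lower-order corrections above, precisely as in the counting in the proofs of Lemma~4.1 and Theorem~1.5 of~\cite{LuPeng_2012}, which is what makes the lemma ``easy to derive from their work''.

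Finally, for the ``in particular'' statement, substitute $m_k(A)=1/n$ and the consequent $m_k(\ell)=1/\ell$ into the inequality, giving $m_k(tm)\le\tfrac1n-\tfrac{\ell}{m^2}+m_k(t\ell)\tfrac{\ell^2}{m^2}$, and let $t\to\infty$. Writing $L=\liminf_{N\to\infty}m_k(\ZZ_N)$, the left-hand side satisfies $\liminf_t m_k(tm)\ge L$, while $\liminf_t m_k(t\ell)=L$ because lifting a near-optimal colouring of $\ZZ_b$ to $\ZZ_{\ell b}$ and letting $b\to\infty$ shows that the $\liminf$ over multiples of $\ell$ equals the overall $\liminf$; passing to a suitable subsequence and rearranging then yields $\liminf_{N\to\infty}m_k(\ZZ_N)\le\tfrac1{n+\ell}$, as claimed.
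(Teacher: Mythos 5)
The paper gives no proof of this lemma at all; it merely asserts it is ``easy to derive'' from the proofs of Lemma~4.1 and Theorem~1.5 in the cited Lu--Peng paper. Your proposal correctly reconstructs the intended blow-up argument: identify the hole with a coset $H$ of the order-$\ell$ subgroup $\tfrac{m}{\ell}\ZZ_m$, pull back along $\pi\colon\ZZ_{tm}\to\ZZ_m$, fill the preimage coset $\widetilde H\cong\ZZ_{t\ell}$ with an optimal colouring, and organise the count by whether $\pi(P)$ lies inside $H$, outside $H$, or crosses; the hypothesis rules out the crossing case exactly as you argue, because any monochromatic crossing $P\subseteq\ZZ_{tm}$ lets one complete $A$ so that $\pi(P)$ becomes a forbidden monochromatic $k$-AP. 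A few refinements worth making explicit. First, the lift count is \emph{exact}, not ``up to lower-order wrap-around terms'': for fixed $(a,d)\in\ZZ_m^2$ there are precisely $t^2$ pairs $(\tilde a,\tilde d)\in\ZZ_{tm}^2$ projecting to it, with identical colour pattern, so the outside-hole contribution is exactly $f_A$ where $f_A$ is the fraction of monochromatic $k$-APs of $\ZZ_m$ avoiding $H$. Second, the degenerate-AP issue you flag is not really an obstacle once one notes that $m_k$ counts pairs $(a,d)$ rather than vertex sets and that the hypothesis is naturally read to cover all such pairs; constant progressions ($d=0$) never cross, and a degenerate crossing $P$ still yields a crossing $\pi(P)$. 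Third, matching $f_A$ to $m_k(A)-m_k(\ell)(\ell/m)^2$ silently reads $m_k(A)$ as the \emph{optimal} (minimising) completion of $A$; with the paper's literal ``maximum'' reading one only gets $f_A\le m_k(A)-m_k(\ell)(\ell/m)^2$, which still proves the stated inequality but less tightly --- worth a sentence. Finally, in the ``in particular'' step the algebra $\liminf m_k(\ZZ_N)\le\tfrac{1/n-\ell/m^2}{1-\ell^2/m^2}$ simplifies to $\tfrac{1}{n+\ell}$ only under the implicit identification $m=n$ (equivalently $m_k(A)=1/m$, i.e.\ the only monochromatic $k$-APs in the optimal completion are the constant ones); the paper's phrasing also leaves this tacit, so you should state it, and your observation that $\liminf_t m_k(\ZZ_{t\ell})=\liminf_N m_k(\ZZ_N)$ via lifting along $\ZZ_{\ell b}\to\ZZ_b$ is exactly the right way to close the limit argument.
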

Lu and Peng~\cite{LuPeng_2012} found a construction for $k=4$ with $n=11$ and $\ell=1$ giving $\liminf m_4(\ZZ_n) \le 1/12$. Using Tabu Search we found the following partial colorings of $\ZZ_{44}$
\begin{align*}
    \star1101111011\star1000101110 \\
    \star0010000100\star0111010001
\end{align*}
and of $\ZZ_{226}$
\begin{align*}
    \star01111001000001011110111001111101101110100011101001010011\\
    00110101101000111010001001000001100010000101111101100001%
    \\
    \star10000110111110100001000110000010010001011100010110101100\\
    11001010010111000101110110111110011101111010000010011110
\end{align*}
that together with \cref{lemma:additive_blow-up} establish $\liminf_{n \to \infty} m_5(\ZZ_n) \leq 1/48$ as well as $\liminf_{n \to \infty} m_6(\ZZ_n) \leq 1/228$ with the former improving upon a bound found by Lu and Peng~\cite{LuPeng_2012}, who also conjectured their construction for $k=4$ to be tight. We believe that our constructions for $k=5, 6$ provide tight bounds as well.

Lastly, we note that Saad and Wolf~\cite{SaadWolf_2017} also initiated a more systematic study of the question when the answer is given by random colorings,  with recent results by Fox, Pham and Zhao~\cite{FoxPhamZhao_2021}, Kam\v{c}ev, Liebenau and Morrison~\cite{KamcevLiebenauMorrison_2021b, KamcevLiebenauMorrison_2021a} as well as Versteegen~\cite{versteegen2021common, versteegen2023linear}. Ru{\'e} and the third author also recently extended the flag algebra framework to additive problems in vector spaces over finite fields \cite{rue2023rado}.

\bigskip

\noindent \textbf{Note.} Recently McKay~\cite{mckay-personal} experimented with various local search strategies in order to further improve the upper bound on $c_4$.
Starting from our best Cayley graph on $768$ vertices, his search iteratively switches the edge/non-edge status of certain pairs of vertices. The most successful attempt created a graph with value $10486266368/768^4=0.0301422734319$, which is an improvement over Theorem~\ref{thm:4-ramsey_multiplicity} of the order $10^{-6}$. 
McKay's graph has $768$ vertices, 148724 edges, 536 vertices of degree 387, 232 vertices of degree 388, and, unlike our construction, has a trivial automorphism group.

\bigskip

\noindent \textbf{Acknowledgements.}  This work was partially funded by the Deutsche Forschungsgemeinschaft (DFG, German Research Foundation) under Germany’s Excellence Strategy – The Berlin Mathematics Research Center MATH+ (EXC-2046/1, project ID: 390685689).

\end{document}